\newtheorem{thm}{Theorem}[section]
\newtheorem*{thm*}{Theorem}
\newtheorem{cor}[thm]{Corollary}
\newtheorem*{cor*}{Corollary}
\newtheorem{lem}[thm]{Lemma}
\newtheorem*{lem*}{Lemma}
\newtheorem{prop}[thm]{Proposition}
\newtheorem*{prop*}{Proposition}
\newtheorem*{thma}{Theorem A}
\newtheorem*{thmb}{Theorem B}
\theoremstyle{definition}
\newtheorem{defn}[thm]{Definition}
\newtheorem*{defn*}{Definition}
\newtheorem{conjecture}[thm]{Conjecture}
\newtheorem*{conjecture*}{Conjecture}
\newtheorem*{conja}{Conjecture A}
\newtheorem*{conja1}{Conjecture A$'$}
\newtheorem*{condition*}{Condition}
\newtheorem*{assumption*}{Assumption}
\theoremstyle{remark}
\newtheorem{rem}[thm]{Remark}
\newtheorem*{rem*}{Remark}
\newtheorem{example}[thm]{Example}
\newtheorem*{problem*}{Problem}
\DeclareMathOperator{\triv}{triv}
\DeclareMathOperator{\sgn}{sgn}
\DeclareMathOperator{\Ind}{Ind}
\DeclareMathOperator{\diag}{diag}
\DeclareMathOperator{\maj}{maj}
\newcommand{\pitoalpha}{\alpha}
\newcommand{\affsl}{Y}
\newcommand{\inneighbors}{N^-}
\newcommand{\outedges}{E^+}
\newcommand{\hessmap}{h}
\newcommand{\outneighbors}{N^+}
\newcommand{\hovar}{A}
\newcommand{\covar}{\xi}
\newcommand{\aaf}{\mathbb{A}_{af}}
\newcommand{\aafgl}{\widehat{\mathbb{A}}_{af}}
\newcommand{\awgsl}{W}
\newcommand{\awg}{W}
\newcommand{\uio}{D}
\DeclareMathOperator{\lt}{LT}
\newcommand{\csf}{\mathcal{X}}
\newcommand{\csfa}{\xi}
\newcommand{\bruleq}{\leq_{bru}}
\newcommand{\BA}{\mathbb A}
\newcommand{\CL}{\mathcal L}
\newcommand{\CV}{\mathcal V}
\newcommand{\CZ}{\mathcal Z}
\newcommand{\yk}{Y_k}
\newcommand{\yp}{Z}
\newcommand{\zk}{Z_k}
\newcommand{\ypk}{Z_k}
\newcommand{\trans}[1]{t(#1)}
\newcommand{\wmin}{-}
\newcommand{\wt}{\widetilde}
\newcommand{\eps}{\epsilon}
\newcommand{\aut}{\mathrm{aut}}
\newcommand{\frob}{\mathcal{F}}
\newcommand{\acts}{\righttoleftarrow}
\newcommand{\cO}{\mathcal{O}}
\newcommand{\rsh}{\mathcal{H}}
\newcommand{\labtocmp}{\alpha}
\newcommand{\aff}{Y}
\DeclareMathOperator{\revl}{rev}
\DeclareMathOperator{\paff}{aff}
\DeclareMathOperator{\shuff}{Std}
\DeclareMathOperator{\Gr}{Gr}
\DeclareMathOperator{\Fl}{Fl}
\DeclareMathOperator{\GL}{GL}
\DeclareMathOperator{\Tor}{Tor}
\DeclareMathOperator{\Lie}{Lie}
\DeclareMathOperator{\Par}{Par}
\DeclareMathOperator{\End}{End}
\DeclareMathOperator{\Sym}{\Lambda}
\DeclareMathOperator{\inv}{inv}
\DeclareMathOperator{\dinv}{dinv}
\newcommand{\wf}{\tau}
\newcommand{\Torc}[3]{\Tor_{#1}^{#2}(#3,\C)}
\newcommand{\dyckpath}{\pi}
\newcommand{\partialdyckpath}{\pi'}
\newcommand{\Ht}{\tilde{H}}
\newcommand{\nlist}{\mathbf}
\newcommand{\an}{{\nlist{a}}}
\newcommand{\bn}{{\nlist{b}}}
\newcommand{\mn}{{\nlist{m}}}
\newcommand{\sn}{{\nlist{s}}}
\newcommand{\tn}{{\nlist{t}}}
\newcommand{\xn}{{\nlist{x}}}
\newcommand{\yn}{{\nlist{y}}}
\newcommand{\zn}{{\nlist{z}}}
\newcommand{\un}{{\nlist{u}}}
\newcommand{\Z}{\mathbb{Z}}
\newcommand{\C}{\mathbb{C}}
\DeclareMathOperator{\rank}{rank}
\author{Erik Carlsson, Anton Mellit}
\begin{document}
	\title{GKM spaces, and the signed positivity of the 
		nabla operator}
	\maketitle
	
	\begin{abstract}
		We show that the Frobenius character of the equivariant Borel-Moore 
		homology $H_*^T(\ypk)$ of a certain positive $GL_n$-version of the unramified affine Springer fiber studied by Goreski, Kottwitz and MacPherson 
		\cite{goresky2004unramified} is computed by the matrix coefficients of the $\nabla^k$-operator, which acts diagonally in the modified Macdonald basis. 
		We do this by relating the combinatorial formula of \cite{carlsson2020combinatorial} for the $\nabla^k$-operator to the GKM paving of $Z_k$, 
		and we give an algebraic presentation of the above homology as an explicit
		submodule of the Kostant-Kumar nil Hecke algebra \cite{kostant1986nil}. 
		We then study a certain open locus $U_k \subset \ypk$, 
		and reduce a long-standing conjecture of \cite{bergeron1999identities},
		which predicts the sign of the coefficients 
		of the Schur expansion of $\nabla^k$,
		to a vanishing conjecture about the homology groups of $U_k$.
		The latter conjecture is in turn  
		reduced to a vanishing conjecture for certain open loci
		of the regular semisimple Hessenberg varieties
		which are indexed by partial Dyck paths. 
	\end{abstract}
	
	\section{Introduction}

	Let $\nabla$ be the nabla operator of \cite{bergeron1999identities}, which is
	diagonal in the basis of modified Macdonald polynomials.
	Let $X,Y$ be alphabets for two different sets of symmetric
	functions, and let $\nabla=\nabla_X$
	act in the $X$-variables.
	We adopt the usual plethystic notation of 
	\cite{Haiman01vanishingtheorems}, in which
	$f[A]$ is the result of substituting $p_k=A|_{v=v^k}$
	where $v$ ranges over all variables that appear in $A$.
	In \cite{carlsson2020combinatorial}, we proved
	\begin{thm}
		\label{thm:oldthm}
		For any $k\geq 1$, we have
		\begin{equation}\label{eq:oldnabla1}
			\nabla^k e_n\left[\frac{XY}{(1-q)(1-t)}\right] 
			= \sum_{[\mn, \an, \bn]}  \frac{t^{|\mn|} 
				q^{\dinv_k(\mn, \an,\bn)}}{(1-q)^n \aut_q(\mn,\an,\bn)}
			X_{\an} Y_{\bn}.
		\end{equation}
	\end{thm}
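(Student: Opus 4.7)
The plan is to adapt the algebraic framework underlying the proof of the shuffle theorem (the $k=1$ case, where $\nabla e_n$ appears) to the $k$-th power of $\nabla$. Both sides of \eqref{eq:oldnabla1} will be produced from a common recursion taking place in an operator algebra $\mathbb{A}$ acting on a polynomial representation built from the two alphabets $X$ and $Y$.

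First I would introduce the algebra $\mathbb{A}$ with raising generators and Demazure-type operators whose relations encode the rational $q,t$-factors that appear on the right-hand side. A distinguished ``nabla element'' of $\mathbb{A}$ should act on the modified Macdonald basis by the usual scalar eigenvalue of $\nabla$, so that the left-hand side $\nabla^k e_n[XY/((1-q)(1-t))]$ can be rewritten as a matrix coefficient of an explicit word of length roughly $kn$ in these generators, applied to a distinguished vector.

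Next I would unfold this word one generator at a time in the polynomial representation. Each generator either extends the underlying partial Dyck path $\mn$, adjoins a letter from the $X$- or $Y$-alphabet (contributing to $\an$ or $\bn$), or reshuffles existing labels while producing a $q$- or $t$-weight from the algebra relations. Collecting the monomial contributions produces a sum indexed by triples $[\mn, \an, \bn]$ whose underlying shape matches the right-hand side, up to a verification of the statistic weights.

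The main obstacle is matching the statistic $\dinv_k(\mn, \an, \bn)$. At each step of the recursion the algebra relations emit specific powers of $q$ and $t$, and one must verify that the accumulated $q$-exponent equals $\dinv_k$ while the $t$-exponent equals $|\mn|$, and that the accumulated denominators assemble into $(1-q)^n \aut_q(\mn,\an,\bn)$. This requires a careful analysis of the ``attacking pairs'' contributed by each generator, and is the technical heart of the argument; the explicit $k$-dependence of $\dinv_k$ arises precisely from the $k$ copies of the nabla element being commuted past the creation operators, which is the new feature compared with the $k=1$ shuffle theorem.
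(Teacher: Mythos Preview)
This theorem is not proved in the present paper; it is quoted from \cite{carlsson2020combinatorial} and used as input. The paper does, however, summarize the method used there: the right-hand side of \eqref{eq:oldnabla1} was shown to satisfy the \emph{defining characterization} of $\nabla^k$ acting on the Cauchy kernel, namely triangularity, the $q,t$-symmetry, and the correct leading term in the modified Macdonald basis. An alternative proof, also mentioned, interprets the sum as an enumeration over parabolic vector bundles on $\mathbb{P}^1$ via \cite{mellit2020poincare}. Neither of these approaches proceeds by building an operator algebra in the style of the shuffle-theorem proof and expanding a word in its generators.

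Your proposal takes a genuinely different route, modeled on the $\mathbb{A}_{q,t}$ machinery of \cite{carlsson2018proof}. As written, though, it is a research outline rather than a proof, and the key steps are not pinned down. In particular: you do not specify what the algebra $\mathbb{A}$ is, what its representation on functions of the two alphabets $X$ and $Y$ looks like, or what the ``nabla element'' is concretely; in the shuffle-theorem framework there is no single element of $\mathbb{A}_{q,t}$ that acts as $\nabla$ on all of $\Lambda$, so the phrase ``a distinguished nabla element of $\mathbb{A}$ should act on the modified Macdonald basis by the usual scalar'' needs a precise formulation before the rest of the plan can be checked. You also correctly identify the matching of $\dinv_k$ with the accumulated $q$-weight as the crux, but give no mechanism for it beyond the assertion that it ``requires a careful analysis.'' The characterization approach actually used in \cite{carlsson2020combinatorial} sidesteps exactly this bookkeeping by verifying axiomatic properties of the sum rather than building it up recursively, which is what makes it cleaner for general $k$.
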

	Here $\mn \in \mathbb{Z}_{\geq 0}^n$,
	$\an,\bn\in\mathbb{Z}_{\geq 1}^n$ are labels for the variables
	$X_\an,Y_\bn$,
	and $[\mn,\an,\bn]$ denotes an orbit under the
	$S_n$ action by simultaneously reordering
	the elements. The absolute value $|\mn|$
	is just the sum of the elements,
	$\dinv_k(\mn,\an,\bn)$ is a certain combinatorial
	statistic extending the one by the same name 
	that appears in the Shuffle Theorem
	\cite{haglund2005combinatoriala,haglund2012compositional,carlsson2018proof}, and $\aut_q(\mn,\an,\bn)$ is an explicit
	product of $q$-factorials.

	The $\dinv$ statistic in equation \eqref{eq:oldnabla1}
	is essentially the dimension of certain
	Schubert cell in the
	affine Springer fiber $\yk$
	of \cite{goresky1997koszul,goresky2003purity,goresky2004unramified}, described in Section 
	\ref{sec:unramgkm}.
	The Frobenius character and the automorphism factors were 
	discovered by studying an explicit
	module $M_k$ over the polynomial ring
	$\C[x_1,...,x_n,y_1,...,y_n]$, which 
	was conjectured to be the image of $ P\otimes
	\mathcal{O}(k)$ under the Haiman-Bridgeland-King-Reid
	isomorphism \cite{bridgeland2007stability,Haiman01vanishingtheorems,haiman2001hilbert}. As we have recently learned, 
	this has been shown using
	some new results in \cite{alvarez2021affine} using 
	Cherednik algebras. 
	The results of \cite{carlsson2020combinatorial} did not attempt to connect
	with Haiman's theory, and instead verified
	equation \eqref{eq:oldnabla1} by establishing that 
	the expected sum satisfies
	the defining properties of the $\nabla^k$ operator, namely triangularity,
	symmetry, and leading term.
	An alternative proof was given by interpreting 
	the sum as an enumeration over parabolic vector bundles over $\mathbb{P}^1$, which had been related to Macdonald polynomials in \cite{mellit2020poincare}.
	
	\subsection{The affine Springer fiber} 
	We describe now the geometric picture. Let $G=GL_n$, with
	standard maximal torus $T\subset G$, which has Weyl group $S_n$.
	The $GL_n$ affine flag variety $Y$ is the quotient $G(\mathcal{K})/I$ where $\mathcal{K}=\C((z))$ and $I$ is the stabilizer of the standard infinite flag of lattices in $\mathcal{K}^n$. It has a structure of an ind-variety, 
	and its connected components are in bijection with integers, each one being isomorphic to its $SL_n$ version. 
	We have an action of the
	$(n+1)$-dimensional torus $\tilde{T} \supset T$ 
	which includes the loop rotation parameter, 
	and the fixed points are in bijection with the
	affine Weyl group $W$, which is the set of (extended) affine permutations.
	The Schubert decomposition $G(\mathcal{K}) = 
	\bigsqcup_{w\in W} I w I$ corresponds to 
	the paving of $Y$ by $I$-orbits $\Omega_w$, 
	which are isomorphic to affine spaces. The set of affine permutations 
	$W$ is endowed with Bruhat order so that any lower set of $W$ 
	corresponds to a closed subvariety of $Y$. The set of positive affine permutations $W_+ \subset W$,
	which take positive integers to positive integers,
	is an example of a lower set, and thus corresponds to a closed subvariety which we denote by $Z\subset Y$. Every connected component of $Z$ consists of a finite number of cells, and therefore is an ordinary variety.
	
	Let $\yk=Y_{\gamma_k}\subset Y$ denote the unramified affine
	Springer fiber studied in \cite{lusztig1991fixed,goresky2004unramified} in type A,
	associated to the topologically nilpotent element
	\[\gamma_k=\diag(a_1z^k,...,a_nz^k),\] 
	where $a_i\in \C^*$ are distinct, which is simply the set of points of $Y$ fixed by $1+\gamma_k$.
	It is shown in \cite{goresky2003purity} 
	that $\yk$ is paved by 
	affine spaces given by the intersections
	$\yk \cap \Omega_w$. The torus $T$
	as well $\tilde{T}$, act on $Y$ and preserve $\yk$, the cells $\Omega_w$, and $Y^+$, and so 
	they also acts on the intersection $\ypk=\yk \cap Z$, which we call \emph{the positive part}. 
	The $T$-equivariant Borel-Moore homology is denoted by $H^T_*$ and is a module over $S=H^*_T(pt)=\C[x_1,...,x_n]$, where the $x_i$ are the generators
	of $\Lie(T)$. We use the convention the $x_i$ are positively graded,
	forcing homological degree to be graded in the negative direction,
	so that the corresponding Frobenius character is a formal 
	Laurent series in $q$. 
	We have inclusions
	\begin{equation}\label{eq:embeddings}
		H^T_*(Y_k) \subset H^T_*(Y) \subset 
		F\otimes_S H^T_*(Y^T)\cong F\cdot W,
	\end{equation}
	where $F=\C(x_1,...,x_n)$ is the field of fractions of $S$,
	$F\cdot W=F\otimes \C[W]$. 
	These spaces are GKM
	with respect to a larger torus $\tilde{T} \supset T$ which
	includes the loop rotation parameter, which means that
	$H_*^{\tilde{T}}(\yk)$ may be described by
	explicit relations in $\tilde{F}\cdot W_+$.

	The non-affine Weyl group $S_n$ acts on 
	$F\cdot W$ on the left and on the right. The left action is called the \emph{dot action} and it intertwines the action of $S$, whereas the right action is called the \emph{star action} and it commutes with $S$. Both of these actions descend to $H^T_*(\yk)$. The dot action on $H^T_*(Y_k)$ becomes identified with the space level action of $S_n$ on $Y$ by the left multiplication. Multiplication by $w\in W$ sends $Y_{\gamma_k}$ to 
	$Y_{w(\gamma_k)}$, so we do not have a space level action per se, but the homology of $Y_k$ for different choices of $\gamma_k$ are all naturally identified via the parallel transport, or alternatively via the embedding 
	$H^T_*(Y_k) \subset H^T_*(Y)$.

	The star action on the other hand is the Springer action, 
	and can be understood as follows. For any parabolic $P_\alpha\supset B$ we have versions of the spaces $Y_k$, $Y$ where we use $I_\alpha$, which is the subgroup of $G(\mathcal{K})$ preserving the corresponding standard 
	partial flag of lattices of $\mathcal{K}^n$ instead of $I$, and is an example of a \emph{parahoric} subgroup. Denote these spaces by $Y_k^{\alpha}\subset G(K)/I_\alpha$. The natural projections
	\[Y_k\to Y_k^{\alpha},\quad Y\to Y^{\alpha}\]
	are proper and $T$-equivariant, and therefore induce pushforward maps on $H^T_*$. The Springer action is the unique action which identifies these pushforward maps with the projection maps to $S_\alpha$-invariants, where $S_\alpha=S_{\alpha_1}\times \cdots\times S_{\alpha_l}=S_n\cap P_\alpha$ is the corresponding Young subgroup.
	
	Both actions descend to $H_*^T(Z_k)=H_*^T(Y_k)\cap F\cdot W_+$.
	
	We may now consider the bigraded Frobenius character.
	We encode the Frobenius character of the star and dot action by 
	symmetric functions in the alphabet $X$ and $Y$ respectively.
	We have that $H^T_*(Z_k)$ is bigraded so that the first grading 
	is half of the cohomological grading, i.e. negative of the homological grading, which exists only in even degree,
	and is encoded by powers of the variable $q$. The second is 
	the space-level grading by the index of the connected 
	component of $Z_k$, which is nonempty only for nonnegative
	integers, encoded by powers of $t$.
	\begin{thma} We have
		\begin{enumerate}[(a)]
			\item \label{item:thmanilhecke}
			The equivariant homology of the Springer
			fiber is identified with an explicit submodule of
			the Kostant-Kumar nil-Hecke algebra
			\[
			H_*^T(Y_k)=\aafgl\cap  \Delta(\xn)^{-k}S \cdot W,
			\]
			where $\Delta(\xn)=\prod_{i<j} (x_i-x_j)$, and the 
			intersection is taken in $F\cdot W$.
			\item \label{item:thmafrob}
			The Frobenius character of the positive part is given by
			\begin{equation}\label{eq:new nabla}
				\frob_{Y,X} H_*^T(\ypk)=q^{-k\binom{n}2} \omega_X \nabla^k e_n\left[\frac{XY}{(1-q)(1-t)}\right].
			\end{equation}
		\end{enumerate}
	\end{thma}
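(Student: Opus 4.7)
The plan is to prove part (a) first using GKM/moment-graph methods for the extended torus $\tilde{T}$, and then derive part (b) by computing the bigraded Frobenius character of $H_*^T(Z_k)$ as a sum over $T$-fixed points indexed by $W_+$, matching the answer against Theorem~\ref{thm:oldthm}.

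For part (a), I would recall that the Kostant--Kumar nil-Hecke algebra $\aafgl$ arises as the GKM description of $H_*^T(Y)$ inside the localized module $F\cdot W$. The map $H_*^T(Y_k)\to H_*^T(Y)$ induced by the closed inclusion is injective because both spaces are paved by affine cells indexed by $W$, so the problem reduces to characterizing the image. For the $\tilde{T}$-action, the normal weights at the fixed point $w$ to the $I$-orbit $\Omega_w$ inside $Y$ have the form $x_i-x_j+\ell z$ for integers $\ell$, and the further equations cutting out $Y_k$ from $Y$ at each fixed point are dictated by the topologically nilpotent element $\gamma_k=\diag(a_1 z^k,\ldots,a_n z^k)$. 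Computing these $\tilde{T}$-weights explicitly and then specializing to $T$ (setting $z=0$), the product of the weights that drop out under localization assembles into powers of $\Delta(\xn)$. The containment $H_*^T(Y_k)\subseteq \aafgl\cap \Delta(\xn)^{-k}S\cdot W$ follows from this computation, while the reverse containment is reduced to a rank/Hilbert series comparison using the Bruhat paving of $Y_k$ proved in \cite{goresky2003purity}.

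For part (b), intersecting with $F\cdot W_+$ restricts the description of (a) to $Z_k$, and the Frobenius character splits over the $T$-fixed points $w\in W_+$. Under the combinatorial identification $W_+ \leftrightarrow \{[\mn,\an,\bn]\}$ used implicitly in Theorem~\ref{thm:oldthm}, I expect the contribution of the cell $\Omega_w\cap Z_k$ to carry $t$-weight $|\mn|$ (from the component index) and $q$-weight equal to the cell's dimension, which I claim equals $\dinv_k(\mn,\an,\bn)$; the equivariant Euler class of the cell produces the denominator $(1-q)^n \aut_q(\mn,\an,\bn)$. The star and dot actions must then be matched with the $X$- and $Y$-alphabets respectively: the Springer (star) action determines the $X$-dependence, and the dot action, compatibility with the parahoric pushforwards $Y_k\to Y_k^\alpha$, produces the $Y$-plethysm. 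Once the per-cell contributions are assembled, Theorem~\ref{thm:oldthm} together with the normalizing prefactor $q^{-k\binom{n}{2}}\omega_X$ yields \eqref{eq:new nabla}.

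The main obstacle, I expect, is the reverse inclusion in part (a): producing explicit equivariant homology classes realizing each element of the candidate submodule $\aafgl\cap \Delta(\xn)^{-k}S\cdot W$. The natural route is an induction along the Bruhat order combined with a dimension count, verifying that the candidate module has the correct rank as an $S$-module cell-by-cell. A secondary technical point is the passage from the cleanest GKM picture over $\tilde{T}$, where the moment-graph divisibility conditions are most naturally stated, down to the $T$-equivariant picture, where the determinantal factor $\Delta(\xn)^k$ appears through the specialization $z\to 0$; keeping track of the weight contributions of the extra $z^k$-factor in $\gamma_k$ is what produces the exponent $k$.
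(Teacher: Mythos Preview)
Your plan for part~(a) is close in spirit to the paper's argument: both use the GKM description of $H_*^T(Y)$ and an induction on lower sets in the Bruhat order to establish the reverse inclusion by comparing leading terms. One point worth noting is that the paper's proof of the forward inclusion $H_*^T(Y_k)\subset \aafgl\cap\Delta(\xn)^{-k}S\cdot W$ (Lemma~\ref{lemma:maxdenom}) does not proceed by simply specializing $\tilde T$-weights to $T$-weights as you suggest; rather, for each pair $i<j$ it constructs an explicit cohomology class supported at a single fixed point, by reducing to the rank-two affine Weyl subgroup $W_{i,j}\cong W'$ and lifting from the $n=2$ case. Your weight-specialization sketch is not obviously wrong, but it is vaguer than what is actually needed here.

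The substantive gap is in part~(b). You propose to compute $\frob_{Y,X}H_*^T(Z_k)$ by summing cell contributions over $w\in W_+$ and matching against the triple-label formula of Theorem~\ref{thm:oldthm}. This correctly handles the $X$-alphabet (the star/Springer action), which is indeed computed via the parahoric projections $Y_k\to Y_k^\alpha$. But you have the mechanisms reversed: the parahoric pushforwards encode the \emph{star} action, not the dot action. The dot action is left multiplication by $S_n$, and the Bruhat cells $\Omega_w\cap Z_k$ are \emph{not} $S_n$-invariant for this action, so one cannot read off the $Y$-Frobenius character cell by cell. The paper's proof instead uses the Hessenberg paving of $Y_k$ from \cite{goresky2003purity} (Proposition~\ref{prop:asptohess}): the strata $E^\alpha_{S_nwS_\alpha}\cap Y_k$ are affine bundles over regular semisimple Hessenberg varieties $\rsh_\pi^{\alpha'}$, and the dot action descends to the dot action on $H_*^T(\rsh_\pi^{\alpha'})$. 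The Frobenius character of \emph{that} action is the LLT polynomial $\csfa_\pi[Y;q]$ by the Shareshian--Wachs conjecture, proved in \cite{brosnan2018hessenberg,shareshian2016chromatic}. This nontrivial external input is what produces the $Y$-dependence, and it is then matched against the second line of \eqref{eq:oldnabla} (the LLT form of Theorem~\ref{thm:oldthm1}) rather than the first. Without the Hessenberg paving and the Brosnan--Chow result, there is no mechanism in your outline to compute the $Y$-character.
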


	In part \ref{item:thmafrob},
	which we explain first, 
	$\omega_X$ denotes the involution $s_\lambda\to s_{\lambda'}$ applied with respect to the $X$ variables.
	This result is proved using Theorem A
	from \cite{carlsson2020combinatorial},
	by establishing that the right side
	of \eqref{eq:oldnabla1} computes the 
	Frobenius character.
	Without the dot action, the right hand side is just counting cells in the affine paving of the parabolic
	version of $Y_k$ using the Springer action, 
	with the number $k\binom{n}2-\dinv_k$ being the dimension of a cell. 
	To compute the dot action,
	we use the alternative presentation
	of \eqref{eq:oldnabla1} as a sum of certain LLT polynomials
	$\csfa_{\pi}[Y;q]$, which are the Frobenius characters of the regular semisimple Hessenberg varieties by a conjecture of Shareshian and Wachs, proved in 
	\cite{brosnan2018hessenberg,shareshian2016chromatic}. 
	We then verify that our decomposition into LLT polynomials precisely matches the paving of 
	$Y_k$ by affine bundles over Hessenberg
	varieties from \cite{goresky2003purity}, and restricting this paving to $Z_k$ we obtain the result. The automorphism
	factors appear because the fibers of the flag versions of
	these Hessenberg varieties over certain parabolic versions
	are products of usual flag varieties.

	In part \ref{item:thmanilhecke}, 
	we identify $H^T_*(Y_k)$ with an explicit bigraded $S_n\times S_n$-module which we now define.
	The group of affine permutations $W$ is generated by the group of finite permutations $S_n$ and the rotation element $\rho_i=i+1$.
	We have the action of Kostant and Kumar's nil Hecke algebra
	$\aaf$ on the $F$-vector space $F\cdot W$
	with basis vectors labeled by $p_w$ for $w\in W$,
	see \cite{kostant1986nil,lam2014schur}. The submodule
	\[\aafgl=\sum_{d\in \mathbb{Z}} 
	\aaf \rho^d \subset F\cdot W\]
	is free over $\aaf$, and is an algebraic presentation
	of the equivariant Borel-Moore homology of the
	affine flag variety $Y$ for $GL_n$ with the action
	of the small torus $T\subset GL_n$.
	The intersection in the first part 
	may now be understood as the $S$-submodule
	consisting of elements of $\aafgl$ whose denominator divides $\Delta(\xn)^k$.
	

	\subsection{An open subvariety and nabla positivity}

	Finally, we present a potential 
	application which can be seen as
	a categorification of the famous nabla positivity
	conjecture of \cite{bergeron1999identities},
	which states that the coefficients $c_{\lambda,\mu}(q,t)$
	of the Schur expansion
	\[(-1)^{\iota(\lambda')}
	\nabla^k s_{\lambda}=\sum_{\mu}c_{\lambda,\mu}(q,t) s_{\mu}\]
	are nonngegative, and $\iota(\lambda)$ is the number of boxes of $\lambda$ below the
	main diagonal (see Conjecture \ref{conj:nabpos}).

	In our approach, we consider the action of
	$\C[y_1,...,y_n]=\C[\mathbb{Z}_{\geq 0}^n]$ on $H_*^T(Z_k)$
	by the left (dot) multiplication by
	\[y_i\in \mathbb{Z}_{\geq 0}^n\subset W_+ \subset W,\]
	which sends $i\mapsto i+n$,
	and fixes the other elements of
	$\{1,...,n\}$. These are induced from the restriction of
	the space-level action of the translation group
	$\mathbb{Z}_{\geq 0} \subset \mathbb{Z}^n\subset W$ 
	on $\ypk$. We thus obtain
	that $H_*^T(Z_k)$ is a bigraded $S_n\times S_n$ equivariant
	module over $R=\C[x_1,...,x_n,y_1,...,y_n]$, in which
	the left action of $S_n$ intertwines the diagonal
	action on $R$.
	
	We then prove that the action of $\C[y_1,...,y_n]$ is free,
	which is analogous to Haiman's freeness of polygraph rings over one set of variables on the ``coherent side'' of the BKR ismorphism \cite{haiman2001hilbert,Haiman01vanishingtheorems},
	and we have learned would follow from the results of 
	\cite{alvarez2021affine}, once one has the
	identification of $M_k$ from Theorem A. We give two different proofs of the freeness. One proof is geometric: 
	we study a certain family of subvarieties
	$Z^i_k\subset \ypk$, which we prove are $S_n$-translates
	of unions of intersected Schubert cells, and
	satisfy $H_*^T(Z^i_k)=y_i H^T_*(\ypk)$ via the inclusion map.
	We then show that their homologies
	satisfy a ``distributive lattice'' type property in
	Proposition \ref{prop:dist-lat} and conclude that
	the $y_i$ define a regular sequence on $H_*^T(\ypk)$.
	
	Another proof uses explicit symmetry between $x$ and $y$ variables, which may be interesting on its own, see Theorem \ref{thm:gl2}. This symmetry is a consequence of a $\GL_2$ action constructed with the the help
	of explicit differential operators using the presentation of $H_*^T(Z_k)$ as the intersection $\aafgl\cap  \Delta(\xn)^{-k}S\cdot W_+$. It turns out our operators preserve both $\aafgl$ and $\Delta(\xn)^{-k}S\cdot W_+$.
	
	We then consider the quotient module
	$H_*^T(Z_k)/(y_1,...,y_n)H_*^T(Z_k)$, for which we also give a geometric interpretation:
	let $U_k=\ypk-Z$ for
	$Z=Z^1_k\cup \cdots \cup Z^n_k$ be the
	complement. Then we have
	the long exact sequence 
	\[\cdots \to H^T_*(Z) \to H^T_*(Z_k) \to H^T_*(U_k) \to H^T_{*-1}(Z) \to \cdots \]
	Using the distributive lattice property, we show that
	this splits into short exact sequences, so that
	$H^T_*(U_k)$ is supported in even 
	degrees, and we have that (Corollary \ref{cor:nkmod})
	\[
	H^T_*(U_k) \cong H_*^T(Z_k)/(y_1,...,y_n)H_*^T(Z_k).
	\]
	Note that this does not imply that $U_k$ is equivariantly formal,
	only that \emph{equivariant} Borel-Moore homology is supported
	in even degree. Indeed, equivariant formality 
	would imply that $H^T_*(U_k)$ is free
	over $\C[x_1,...,x_n]$ which is untrue, 
	and in fact $U_k$ has odd non-equivariant homology.
	
	Then the $i$th Tor group
	$\Tor_i^R(H^T_*(Z_k),\C)\cong \Tor_i^{S} (H^T_*(U_k),\C)$ inherits
	the left and right
	action of $S_n$ as well as the bigrading. 
	We propose the following categorification of nabla 
	positivity: 
	\begin{conja}
		The multiplicity of the irreducible
		representation $\chi_\lambda$
		of the left action of $S_n$ 
		on $\Tor_i^R(H^T_*(Z_k),\C)\cong \Tor_i^{S} (H^T_*(U_k),\C)$
		is nonzero for at most one value of
		$i$, namely $i=\iota(\lambda)$.
	\end{conja}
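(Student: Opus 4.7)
The plan is to reduce the conjectural vanishing of Tor to a family of refined vanishing statements on the open loci of regular semisimple Hessenberg varieties alluded to in the abstract.

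First, because the paper establishes that $\C[y_1,\dots,y_n]$ acts freely on $H^T_*(Z_k)$, the Koszul complex on $y_1,\dots,y_n$ is a free resolution, and so
\[
\Tor^R_i(H^T_*(Z_k),\C)\;\cong\;\Tor^S_i(H^T_*(U_k),\C)
\]
as bigraded $S_n\times S_n$-modules, where the isomorphism uses the identification $H^T_*(U_k)\cong H^T_*(Z_k)/(y_1,\dots,y_n)H^T_*(Z_k)$ of Corollary \ref{cor:nkmod}. This reduces the conjecture to controlling the $i$-th $\Tor^S$ of the equivariant Borel-Moore homology of the open variety $U_k$ as an $S_n$-representation for the dot action.

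Second, I would stratify $U_k$ geometrically. The Goresky--Kottwitz--MacPherson affine paving presents $Z_k$ as a union of affine bundles over regular semisimple Hessenberg varieties indexed by Dyck paths, matching the LLT decomposition of the combinatorial formula of Theorem \ref{thm:oldthm}. Removing $Z_k^1\cup\cdots\cup Z_k^n$ cuts each stratum down to an affine bundle over an \emph{open} locus of a Hessenberg variety indexed by a partial Dyck path. Combining this with the Shareshian--Wachs--Brosnan--Chow identification of LLT polynomials with Frobenius characters of Hessenberg varieties under the Springer (star) action should yield a decomposition of $H^T_*(U_k)$ as an $S_n$-equivariant $S$-module into contributions $H^T_*(V_\pi)$ indexed by partial Dyck paths $\pi$. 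Consequently, the $\chi_\lambda$-multiplicity in $\Tor^S_i(H^T_*(U_k),\C)$ splits as a sum over $\pi$ of $\chi_\lambda$-multiplicities in $\Tor^S_i(H^T_*(V_\pi),\C)$.

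The conjecture would then follow from the statement that, for each open Hessenberg locus $V_\pi$ and each $\lambda$, the multiplicity of $\chi_\lambda$ in $\Tor^S_i(H^T_*(V_\pi),\C)$ vanishes unless $i=\iota(\lambda)$. The main obstacle is proving this refined Hessenberg vanishing. The Shareshian--Wachs--Brosnan--Chow theorem describes the $S_n$-character of $H^T_*(V_\pi)$ at the level of Frobenius characters, but here one needs the full structure of its minimal free resolution over $S$, together with the interaction between the star action and the $S$-module grading. Plausible attacks include an inductive argument on partial Dyck paths using long exact sequences attached to closed Hessenberg inclusions, exploitation of the $\GL_2$-symmetry from Theorem \ref{thm:gl2} to move between Tor degrees, or a degeneration of $V_\pi$ to a more tractable variety; each would require substantial new input, which is why the paper leaves both this vanishing and Conjecture A open.
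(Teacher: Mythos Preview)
The statement is Conjecture A, which the paper does \emph{not} prove; it is left open. What the paper does prove is Theorem~B, namely that Conjecture~A$'$ (the refined vanishing for the open Hessenberg loci $U_{\pi,l}$) implies Conjecture~A. Your proposal is not a proof of Conjecture~A but rather a sketch of this same reduction, and you correctly acknowledge in your last paragraph that the Hessenberg vanishing remains the essential obstacle. In that sense your overall strategy matches the paper's.

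Two points deserve correction, however. First, your claim that the $\chi_\lambda$-multiplicity in $\Tor_i^S(H^T_*(U_k),\C)$ ``splits as a sum over $\pi$'' of the corresponding multiplicities for $H^T_*(V_\pi)$ is not how the argument works. A stratification gives short exact sequences of $S$-modules (this is what the paper extracts from the Hessenberg paving and Corollary~\ref{cor:nkmod}), and applying $\Tor^S$ to a short exact sequence produces a \emph{long} exact sequence, not a direct sum. The paper's actual argument (proof of Theorem~\ref{thm:torimpliesnp}, second statement) proceeds by induction on the lower set $P$: for $P=Q\cup S_n w$, the long exact sequence
\[
\cdots \to \Tor_i^S(H^T_*(U_k\cap V_Q),\C) \to \Tor_i^S(H^T_*(U_k\cap V_P),\C) \to \Tor_i^S(H^T_*(U_{\pi,l}),\C) \to \cdots
\]
shows that the $\chi_\lambda$-isotypic part of the middle term vanishes for $i\neq\iota(\lambda)$ once the outer terms do (by induction and by Conjecture~A$'$ respectively). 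No additivity of multiplicities is needed or asserted.

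Second, a terminology slip: the $S_n$-action in Conjecture~A is the \emph{left} (dot) action, and the Shareshian--Wachs/Brosnan--Chow theorem computes the Frobenius character of the \emph{dot} action on Hessenberg cohomology. The Springer action is the \emph{right} (star) action, which governs the parahoric/$X$-variable structure, not the $Y$-variable one relevant here.
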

	The equivariant homology $H_*^T(U_k)$ is pure, and therefore by \cite{franz2005weights} the $\Tor$ groups above can be interpreted as steps in the weight filtration in the corresponding non-equivariant homology $H_*(U_k)$. For instance, $\Tor_0$ corresponds to the pure part of the homology, $\Tor_1$ to the part one degree off from pure, and so on, see Corollary \ref{cor:weight U_k}.
	
	We further refine Conjecture A
	by intersecting $U_k$ with the 
	the paving by affine bundles over Hessenberg varieties of \cite{goresky2003purity}.
	Interestingly, while the Hessenberg varieties from this paving are labeled by Dyck paths, their intersections
	with $U_k$ have slightly more structure, and in fact are
	determined by \emph{partial} Dyck paths, in which the number
	of missing steps $l$ corresponds to the number of nontrivial
	intersections with the $Z^i_k$. The homology groups of these varieties
	$H_*^T(U_{\pi,l})\cong H^*_T(U_{\pi,l})$, where
	the second isomorphism follows by smoothness of $U_{\pi,l}$,
	are further conjectured to satisfy
	the vanishing property in the following refinement of 
	Conjecture A:
	\begin{conja1}
		Conjecture A holds with $U_{\pi,l}$ in place of $U_k$.
	\end{conja1}
	Again, $H_*^T(U_{\pi,l})$ is pure, and therefore the conjecture can be interpreted in terms of the weight filtration on $H_*(U_{\pi,l})\cong H^*(U_{\pi,l})$, see Corollary \ref{cor:weight U pi}.
	
	We then prove our second result:
	\begin{thmb}
		Conjecture A implies the nabla positivity conjecture
		of \cite{bergeron1999identities}.
		Conjecture A$'$ implies Conjecture A.
	\end{thmb}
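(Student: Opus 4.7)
The plan is to reduce both implications to generating-function identities, Koszul/resolution arguments, and purity. The central inputs are Theorem~A\,(b), which gives
\[
  \frob_{Y,X}(H^T_*(Z_k)) = q^{-k\binom{n}{2}}\,\omega_X \nabla^k e_n\!\left[\frac{XY}{(1-q)(1-t)}\right],
\]
together with the freeness of the $\C[y_1,\ldots,y_n]$-action on $H^T_*(Z_k)$ proved earlier in the paper.

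For the implication \emph{Conjecture~A implies nabla positivity}, freeness identifies $H^T_*(U_k)\cong H^T_*(Z_k)/(y_1,\ldots,y_n)H^T_*(Z_k)$ and $\Tor^R_i(H^T_*(Z_k),\C)\cong\Tor^S_i(H^T_*(U_k),\C)$ via killing the regular sequence. I would pick a minimal bigraded $S_n\times S_n$-equivariant free resolution of $H^T_*(Z_k)$ over $R$ and take Euler characteristic, writing $\frob_{Y,X}(H^T_*(Z_k))$ as an alternating sum $\sum_i(-1)^i \frob(\Tor^R_i)$ Kronecker-multiplied on the left by the explicit Frobenius series of $R$. On the other hand, by the dual Cauchy identity
\[
  e_n\!\left[\frac{XY}{(1-q)(1-t)}\right] = \sum_{\lambda\vdash n} s_\lambda\!\left[\frac{X}{(1-q)(1-t)}\right] s_{\lambda'}[Y],
\]
the $s_{\lambda'}[Y]$-coefficient of the right-hand side of Theorem~A\,(b) is a prescribed plethystic image of $\omega_X \nabla^k s_\lambda[X]$. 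Matching these two expansions and reading off $s_\mu[X]$-coefficients identifies each Schur coefficient of $(-1)^{\iota(\lambda')}\nabla^k s_\lambda[X]$ with the alternating multiplicity $\sum_i(-1)^i[\chi_\mu:\Tor^R_i]$ up to a controlled sign; Conjecture~A collapses this to the single non-negative contribution $(-1)^{\iota(\mu)}[\chi_\mu:\Tor^R_{\iota(\mu)}]$, which is the desired positivity.

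For \emph{Conjecture~A$'$ implies Conjecture~A}, I use the restriction of the Goresky--Kottwitz--MacPherson paving to $U_k$, giving a stratification $U_k=\bigsqcup_{\pi,l} U_{\pi,l}$ by locally closed smooth pieces. Purity of $H^T_*(U_k)$ and of each $H^T_*(U_{\pi,l})$ (cf. Corollaries \ref{cor:weight U_k} and \ref{cor:weight U pi}) forces the long exact sequences of the open--closed pairs in the filtration to break into short exact sequences of pure, bigraded $S_n\times S_n$-modules. In the Tor long exact sequence for any such extension $0\to A\to B\to C\to 0$, once the Tor-concentration property is assumed for $A$ and $C$, the $\chi_\lambda$-isotypic components of $\Tor_i^S(C)$ and $\Tor_{i-1}^S(A)$ live in different Tor degrees (namely $\iota(\lambda)$ and $\iota(\lambda)+1$), so the connecting maps vanish on those components and the Tor-concentration property transfers to $B$. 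Inducting on the filtration propagates Conjecture~A$'$ from the pieces to all of $U_k$.

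The main obstacle is Part~1's sign and plethystic bookkeeping: $\omega_X \nabla^k s_\lambda[X/((1-q)(1-t))]$ must be related cleanly to the Schur expansion of $\nabla^k s_\lambda[X]$ itself. Since $\omega_X$ and $\nabla^k$ neither commute nor diagonalize in the same basis, and the plethystic substitution $X\mapsto X/((1-q)(1-t))$ mixes Schur functions non-trivially through modified Macdonald polynomials, turning the Euler-characteristic expansion into a clean statement about $\nabla^k s_\lambda[X]$ and verifying that the combinatorial sign $(-1)^{\iota(\mu)}$ produced by Conjecture~A matches the $(-1)^{\iota(\lambda')}$ of the Bergeron--Garsia conjecture is the step requiring the most delicate care; any stray factor of $(-1)$ or $q^{\binom{n}{2}}$ would invalidate the implication.
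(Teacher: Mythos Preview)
Your argument for the second implication (Conjecture~A$'$ $\Rightarrow$ Conjecture~A) is correct and is exactly the paper's proof: the short exact sequence from the Hessenberg paving, the long exact sequence in $\Tor^S$, and the observation that on each $\chi_\lambda$-isotypic component the flanking terms vanish for $i\neq\iota(\lambda)$.

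For the first implication you have the right skeleton but have made two avoidable missteps that together manufacture the ``obstacle'' you flag at the end.

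First, the mechanism relating $\frob$ and $\Tor$ is not a Kronecker product with $\frob R$; it is the plethystic substitution of equation~\eqref{eq:plethysm}. Since both $\C[\yn]$ (graded by $t$) and $S=\C[\xn]$ (graded by $q$) intertwine the \emph{dot} action of $S_n$, which is the one recorded by the $Y$-alphabet, the Euler characteristic of $\Tor^R(H^T_*(Z_k),\C)$ is obtained from Theorem~A(b) by the substitution $Y\mapsto Y(1-q)(1-t)$, not by any substitution in $X$. This is exactly what the paper does (in two steps, via $N_k=H^T_*(U_k)$ and Corollary~\ref{cor:frobquot}).

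Second, and relatedly, in your dual Cauchy expansion you chose to attach the denominator $(1-q)^{-1}(1-t)^{-1}$ to the $X$-alphabet. Put it with $Y$ instead. After the substitution $Y\mapsto Y(1-q)(1-t)$ the right-hand side of Theorem~A(b) becomes simply
\[
q^{-k\binom{n}{2}}\,\omega_X\,\nabla^k e_n[XY]
=q^{-k\binom{n}{2}}\sum_{\lambda}\omega_X\nabla^k s_{\lambda'}(X)\,s_\lambda(Y)
=q^{-k\binom{n}{2}}\sum_{\lambda,\mu} c_{\lambda',\mu'}(q,t)\,s_\mu(X)\,s_\lambda(Y),
\]
with no plethysm left to undo and no passage through Macdonald polynomials. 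Your worry about $\omega_X$ not commuting with $\nabla^k$ is then irrelevant: you never need to pull $\omega_X$ past anything.

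Finally, your sign is indexed by the wrong partition. Conjecture~A constrains the dot action, i.e.\ the $Y$-isotype, so the surviving sign is $(-1)^{\iota(\lambda)}$ with $\lambda$ the $Y$-index, not $(-1)^{\iota(\mu)}$. Comparing with the display above gives $(-1)^{\iota(\lambda)}c_{\lambda',\mu'}\geq 0$, which after relabelling $\mu'\to\mu$ is precisely Conjecture~\ref{conj:nabpos}. The ``delicate'' matching you anticipate is a one-line relabelling once the plethysm is placed in $Y$.
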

	Both conjectures A and A$'$ are 
	supported by algebraic calculations
	using Gr\"{o}bner bases in SAGE and MAPLE.
	Additionally, in Proposition \ref{prop:nabchi} we prove an explicit formula for the
	Frobenius character of $\chi_{\pi,l}=\frob H_*^T(U_{\pi,l})$
	as a quasisymmetric function, 
	which may be readily observed on a computer
	to satisfy the signed positivity
	property for large values of $n$, for instance up to 
	at least 10. In an upcoming paper, we deduce this fact from
	an explicit combinatorial formula for the Schur 
	expansion of $\chi_{\pi,l}$, which also impies the Loehr-Warrington
	formula \cite{loehr2008nested} and establishes
	the signed positivity of $\nabla^k s_{\lambda}$ in
	the monomial basis $m_\mu$.
	
	\section{Notations}
	For convenience of the reader, we summarize below some notation used throughout the paper.
	
	\begin{tabular}{cl}
		$G$ & $=GL_n$\\
		$T$ & $=(\C^*)^n\subset G$, the small torus\\
		$\wt T$ & $=T\times \C^* = (\C^*)^{n+1}$ the big torus\\
		$S$ & $=\C[x_1,\ldots,x_n]=H^*_T(\text{point})$\\
		$\tilde S$ & $=\C[x_1,\ldots,x_{n}, \epsilon]=H^*_{\tilde T}(\text{point})$ \\
		$F, \tilde F$ & the fraction field of $S$, $\tilde S$ respectively\\
		$S_n$ & the permutation group, i.e. the Weyl group of $G$\\
		$W$ & the group of affine permutations, i.e. the extended affine Weyl group\\
		$W_+$ & the monoid of positive affine permutations\\
		$Y, Y_k$ & the affine flag variety, the affine Springer fiber respectively\\
		$Z, Z_k$ & the positive part of $Y, Y_k$\\
		$H^*_T$ & the equivariant cohomology \\
		$H_*^T$ & the equivariant Borel-Moore homology.
	\end{tabular}

	\emph{Acknowledgments}. E. Carlsson was supported by
	NSF DMS-1802371 during part of this project. A. Mellit was supported by the projects Y963-N35 and
	P31705 of the Austrian Science Fund.
	
	\section{Combinatorial background and preliminaries}
	\label{sec:prelim}
	We recall some basic combinatorial background and notation,
	and also the main theorem of \cite{carlsson2020combinatorial},
	which is Theorem \ref{thm:oldthm} from the introduction.
	
	\subsection{Symmetric functions}
	
	\label{sec:sym}

	Let $\Sym$ denote the ring of symmetric
	functions, and write $\Lambda_R$ if we would like to
	specify a ground ring $R$, which by default is
	equal to $\mathbb{Q}$.
	We have the usual bases $s_\lambda,m_\lambda,h_{\lambda},
	e_{\lambda},p_{\lambda}$
	labeled by partitions $\lambda \in \Par(n)$,
	see \cite{macdonald1995symmetric}.
	For $f\in \Sym$,
	let $f[A]$ be the plethystic substitution
	homomorphism, which evaluates $f$
	at $p_k=A^{(k)}$, which is the result
	of substituting $v=v^k$ for every variable
	appearing in $A$, as in \cite{Haiman01vanishingtheorems}.

	If $V$ is a representation of $S_n$, we have
	its Frobenius character
	\[\frob V =
	\sum_{\lambda \in \Par(n)}\dim(V^{S_\lambda}) 
	m_\lambda \in \Sym.\]
	Here $S_{\lambda}=S_{\lambda_1}\times\cdots
	\times S_{\lambda_l}\subset S_n$ is the Young subgroup, and 
	$V^{S_{\lambda}}$ are the invariants.
	Note that both $S_{\alpha}$ and $m_\alpha$
	are also defined when $\alpha=(\alpha_1,...,\alpha_l)$ is
	only a strict composition of $n$, in other
	words ones of the $2^{n-1}$ finite sequences
	of positive integers summing to $n$, so that
	the sorted ones are just the partitions.

	If $V$ is a representation of 
	a product of $k$ factors of
	the symmetric group,
	we will denote the Frobenius character by 
	\[\frob_{X_1,...,X_k} V \in \C[x_{i,j}]^{S_n\times \cdots \times S_n},\]
	which is a function in $k$ sets of variables,
	$X_i=(x_{i,1},x_{i,2},...)$, individually symmetric in each one. 
	For modules with one or more gradings,
	which in this paper are bounded below in the negative degree,
	the Frobenius character will encode the
	degree in some generating variables
	$q_1,...,q_k$. For instance, for 
	bigraded modules which appear in this paper we
	will use the variables $q,t$, and so we will
	have
	\[\frob V=\sum_{i,j} q^it^j \frob V^{(i,j)}
	\in \Lambda_{\mathbb{Z}_{((q,t))}}\]
	where $V^{(i,j)}$ is
	the homogeneous component of the bigrading.
	For finitely generated modules, this can also be
	considered as an element of $\Lambda_{q,t}=\Lambda_{\mathbb{Q}(q,t)}$ by summing
	the generating function.
	
	Suppose $V=M$ is a graded module over a polynomial ring
	$\C[\xn]=\C[x_1,...,x_n]$, using bold letters for
	sets of variables, together with an action of $S_n$
	which intertwines the action on the variables 
	$\sigma(x_i)=x_{\sigma_i}$. In other words, $M$ is a 
	graded module over the smash product $\C[\xn]\ltimes S_n$.
	Then we have the Frobenius
	character $\frob M \in \Lambda_{q}$ regarding
	$M$ as a vector space. In this case, the Frobenius
	character of the Euler characteristic is determined
	by plethystic substitution. In other words,
	\begin{equation}
		\label{eq:plethysm}
		\sum_{i\geq 0} (-1)^i\frob \Tor_i^{\C[\xn]}(M,\C)=
		\frob M\left[X(1-q)\right]
	\end{equation}
	where $\C=\C[\xn]/(x_1,...,x_n)$ is the module on which
	$x_i$ acts by zero, the Frobenius character is taken with
	respect to the induced action of $S_n$ on the Tor groups,
	and for any symmetric function $F$ we have that
	$F[Z(1-q)]$ is the image of $F$ under the homomorphism
	defined on the power sum generators by
	\[p_i\mapsto p_i(z_1,z_2,...)(1-q^i),\quad Z=z_1+z_2+\cdots.\]
	Note that there is no relation between the symmetric function
	variables labeled by $z_i$ and the generators of the ring.
	We hope this does not create confusion when the symmetric function variables are given by the alphabet $X$.
	
	We also have the plethystic substitution 
	$F[-X]$ which is the result of substituting 
	$p_i(Z)\mapsto-p_i(Z)=(-1)^i \omega p_i(Z)$,
	where $\omega$ is the Weyl involution.
	
	\subsection{The nabla operator}
	
	\label{sec:nabla}
	
	Let $\Sym_{q,t}$ be the ring of symmetric
	over $\C(q,t)$. 
	Let $\Ht_\lambda=\Ht_\lambda(X;q,t)$ 
	denote the modified Macdonald polynomial
	\[\Ht_\lambda(X;q,t)=
	t^{n(\lambda)} J_{\lambda}[X/(1-t^{-1});q,t^{-1}],\]
	and let $\nabla$ be the operator of Bergeron-Garsia-Haiman-Tesler
	\cite{bergeron1999identities} defined by
	\begin{equation}
		\label{eq:nabladef}    
		\nabla \Ht_\lambda(X;q,t)=q^{n'(\lambda)} t^{n(\lambda)} \Ht_\lambda(X;q,t),
	\end{equation}
	where
	\begin{equation}
		\label{eq:macn}
		n(\lambda)=\sum_{i} (i-1)\lambda_i,\quad
		n'(\lambda)=n(\lambda')=\sum_{i} \binom{\lambda_i}{2}
	\end{equation}
	as defined in
	\cite{macdonald1995symmetric}. 
	Here $\lambda'$ is the transposed partition,
	and notice that $n'(\alpha)$ is defined for 
	compositions $\alpha$, not just partitions.
	When there is more than one symmetric
	function alphabet present, 
	we will suppose that $\nabla=\nabla_X$
	acts on the symmetric functions
	in the $X$ variables.
	
	Then the nabla positivity conjecture of
	Bergeron, Garsia, Haiman, Tesler
	is as follows:
	\begin{conjecture}[\cite{bergeron1999identities},
		Conjecture {\MakeUppercase {\romannumeral 1}}]
		\label{conj:nabpos}
		For any partitions
		$\lambda,\mu$ of norm $n$, and $k\geq 1$, we have that
		\begin{equation}
			\label{eq:nabpos}
			(-1)^{\iota(\lambda)}
			\left(\nabla^k s_{\lambda'},s_\mu\right)\in
			\mathbb{Z}_{\geq 0}[q,t],
		\end{equation}
		where the inner product is the Hall inner product,
		and
		\[\iota(\lambda)=
		\binom{l(\lambda)}{2}-
		\sum_{\lambda_i<i-1} (i-1-\lambda_i) = \sum_{i} \min(\lambda_i, i-1)
		\]
		is the number of cells of $\lambda$ below the main diagonal.
	\end{conjecture}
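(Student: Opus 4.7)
The plan is to use Theorem A(b) to convert the Schur expansion of $\nabla^k s_{\lambda'}$ into a statement about Schur multiplicities in the equivariant Borel-Moore homology $H_*^T(Z_k)$, and then to use the free action of $\C[y_1,\dots,y_n]$ to separate these multiplicities by homological degree through $\Tor$. Concretely, pairing
\[\frob_{Y,X} H_*^T(Z_k) = q^{-k\binom{n}{2}} \omega_X \nabla^k e_n\!\left[\frac{XY}{(1-q)(1-t)}\right]\]
against $s_\mu(Y)$ extracts (up to the overall scalar $q^{-k\binom{n}{2}}$) the Frobenius character in $X$ of the $\chi_\mu$-isotypic component of $H_*^T(Z_k)$ under the star action; reading that character in the Schur basis and applying $\omega_X$ converts the coefficient of $s_{\lambda}(X)$ into $(\nabla^k s_{\lambda'}, s_\mu)$, which is precisely the quantity whose sign we must control.

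Next I would exploit the freeness of the $\C[\yn]$-action on $H_*^T(Z_k)$, which the paper establishes either from the distributive lattice property of the subvarieties $Z^i_k$ or from the $\GL_2$-symmetry coming from explicit differential operators that preserve $\aafgl \cap \Delta(\xn)^{-k} S\cdot W_+$. Freeness yields $\Tor_i^R(H_*^T(Z_k),\C) \cong \Tor_i^S(H_*^T(U_k),\C)$, and the plethystic formula \eqref{eq:plethysm} expresses the character of $H_*^T(U_k)$ after the substitution $X \mapsto X(1-q)(1-t)$ as the alternating sum over $i$ of Frobenius characters of $\Tor_i^S(H_*^T(U_k),\C)$. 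If Conjecture A holds, each irreducible $\chi_\lambda$ contributes to only one $\Tor$-degree, namely $i = \iota(\lambda)$, so the alternating sum collapses to a sum with uniform sign $(-1)^{\iota(\lambda)}$ on each dot-isotypic component, which is exactly the sign required in Conjecture \ref{conj:nabpos} once the $\omega_X$ on the geometric side is accounted for by the transposition $\lambda \leftrightarrow \lambda'$.

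The main obstacle is Conjecture A itself. My plan for it would be to pass to the paving of $Z_k$ by affine bundles over the regular semisimple Hessenberg varieties of \cite{goresky2003purity}, and to intersect with $U_k$ to obtain strata $U_{\dyckpath,l}$ indexed by partial Dyck paths. The smoothness of these strata gives $H_*^T(U_{\dyckpath,l}) \cong H^*_T(U_{\dyckpath,l})$, reducing the problem to Conjecture A$'$ for each piece. Since $U_{\dyckpath,l}$ is pure, by \cite{franz2005weights} the $\Tor$-degrees correspond to steps in the weight filtration on ordinary cohomology, so Conjecture A$'$ becomes a weight-filtration concentration statement on a concrete family of Hessenberg-type open loci. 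This is the step where I expect real geometric input to be required: one would want either a Loehr-Warrington-style signed Schur expansion of $\chi_{\dyckpath,l}$, or an explicit cell decomposition whose dimensions pin down the weights on each $\chi_\lambda$-isotypic component. Short of that, the quasisymmetric formula for $\chi_{\dyckpath,l}$ proved later in the paper, together with the Gr\"obner-basis verifications in SAGE and MAPLE, are the principal source of evidence one would marshal before trying to push the geometric argument through.
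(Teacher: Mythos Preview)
The statement in question is a conjecture that the paper does not prove. What the paper proves is Theorem~B (Theorem~\ref{thm:torimpliesnp}), which reduces Conjecture~\ref{conj:nabpos} to Conjecture~A, and further reduces Conjecture~A to Conjecture~A$'$. Your proposal is not a proof of the conjecture either; it is a sketch of precisely this reduction, and you correctly flag Conjecture~A as the unresolved step. In that sense your outline matches the paper's approach and stops at the same place the paper does.

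One correction to the details of your write-up: the plethystic substitutions go through the $Y$ variables, not $X$. The dot action, which is the one intertwining both the $\C[\xn]$-action and the $\C[\yn]$-action, is encoded by $Y$. Passing from $H_*^T(Z_k)$ to $N_k=H_*^T(U_k)$ via freeness over $\C[\yn]$ corresponds to $Y\mapsto Y(1-t)$ (Corollary~\ref{cor:frobquot}), and then the alternating Euler characteristic of $\Tor^S$ corresponds to $Y\mapsto Y(1-q)$, as in~\eqref{eq:plethysm}. The paper then expands
\[
\sum_{i\geq 0}(-1)^i \frob_{Y,X}\Torc{i}{S}{N_k}=\omega_X\nabla^k e_n[XY]=\sum_{\lambda,\mu} c_{\lambda',\mu'}(q,t)\,s_\mu(X)s_\lambda(Y)
\]
via the Cauchy identity. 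Under Conjecture~A, the $s_\lambda(Y)$-component on the left comes only from $i=\iota(\lambda)$, and since each $\Tor_i$ is an honest bigraded $S_n\times S_n$-representation, its $X$-character is Schur-positive; this gives $(-1)^{\iota(\lambda)}c_{\lambda',\mu}\in\Z_{\geq 0}[q,t]$. So there is no need to pair against $s_\mu(Y)$ as a separate first step: the $X$-side positivity is automatic once Conjecture~A handles the $Y$-side concentration.
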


	\subsection{Standardization and super labels}
	
	Fix $n$ corresponding to degree,
	and define a label to be an
	$n$-tuple of positive integers
	$\an =(a_1,...,a_n)$ with $a_i\geq 1$. 
	We define a (strict) composition of $n$ by
	$\labtocmp(\an)=(\alpha_1,...,\alpha_{l})$,
	where $\alpha_i=\#\{j:a_j=c_i\}$,
	and $c_1<\cdots <c_l$ are the numbers that
	appear at least once in $\an$.
	For instance,
	$\labtocmp(2,2,4,1,4,2,1,5,4)=(2,3,3,1)$.
	We will set $X_{\an}=x_{a_1}\cdots x_{a_n}$.
	We define the $q$-automorphism factor
	\[\aut_q(\an)=\aut_q(\labtocmp(\an)),\quad \aut_q(\alpha)=\prod_i [\alpha_i]_q!\]
	
	Let $\mn\in \mathbb{Z}^n_{\geq 0}$,
	and write $|\mn|=m_1+\cdots+m_n$. We will be interested
	in $(d+1)$-tuples $(\mn,\an_1,...,\an_d)$ where
	each $\an_i$ is a label. We define
	the class $[\mn,\an_1,...,\an_d]$ to be the orbit
	of the diagonal action of $S_n$ by simultaneously 
	permuting the entries. We will identify these orbits
	with their minimal representatives, for which $\mn$
	is in decreasing order, and each $\an_i$
	is in increasing order when there is a tie, lexicographically. Such tuples are called \emph{sorted}.
	We similarly have $\alpha=\alpha(\mn,\an_1,...,\an_d)$
	defined so that $S_\alpha$ is the stabilizer subgroup
	for the diagonal action of $S_n$, and similarly for
	$\aut_q(\mn,\an_1,...,\an_d)$.
	For instance, the sorted representative of
	\[[\mn,\an,\bn]=[(2,1,2,2,0,2,0),(2,2,2,2,3,1,3),(1,3,1,1,1,2,1)]\]
	is $((2,2,2,2,1,0,0),(1,2,2,2,2,3,3),(2,1,1,1,3,1,1))$.
	Then we have 
	$\alpha(\mn,\an,\bn)=(1,3,1,2)$, and 
	\[\aut_q(\mn,\an,\bn)=[1]_q![3]_q![1]_q![2]_q!=
	(1+q)^2(1+q+q^2).\]
	
	\label{sec:super}
	We briefly recall some facts
	from \cite{haglund2005combinatoriala}, Section 4.
	First, we have the super alphabet
	\[\mathcal{A}=\Z_+\cup \Z_-=
	\left\{1,\bar{1},2,\bar{2},...\right\}.\]
	The integers $i$ are called positive, while
	the overlined numbers $\bar{i}$ are called negative.
	The set is totally ordered by
	$\{1<\bar{1}<2<\bar{2}<\cdots\}$,
	which is referred to there as $<_1$.
	Let the negative of a letter be the result of
	reversing the sign, and let absolute value 
	remove any sign leaving
	only the positive part. In other words,
	$-c=\overline{c}$, $-\overline{c}=c$, and
	$|c|=|\overline{c}|=c$
	for $c \in \mathbb{Z}_{\geq 1}$.
	We set $X_{\overline{a}}=X_{a}$.
	
	We will make use of the following definition
	\begin{defn}
		\label{def:standardization}
		The standardization $\sigma=\shuff(\an)$
		of a label $\an\in \mathcal{A}^n$
		is the unique permutation $\sigma$ such that
		$\an_{\sigma^{-1}}$ is weakly increasing, and
		the restriction of $\sigma$ to $\an^{-1}(\{x\})$
		is increasing if $x$ is positive, decreasing
		if $x$ is negative. Here we are viewing
		$\an$ as a function $\{1,...,n\}\rightarrow \mathcal{A}$.
	\end{defn}
	For instance,
	$\shuff((2,\bar{1},1,4,2,\bar{1},\bar{1}))=
	(5,4,1,7,6,3,2)$.
	
	\subsection{Previous results}
	
	\label{sec:prev}
	
	We recall the results from \cite{carlsson2020combinatorial},
	which we refer to for more details.
	
	We begin by recalling the $\dinv$ statistic.
	\begin{defn}
		\label{def:dinv}
		Let $\mn\in \mathbb{Z}_{\geq 0}^n$,
		let $\an,\bn$ be labels, let $k\geq 1$,
		and suppose that $(\mn,\an)$ are sorted. We define
		\begin{subequations}
			\label{eq:dinvdef}
			\begin{equation}
				\label{eq:dinvdefa}
				\dinv_k(\mn,\an,\bn)=\sum_{i<j} \dinv^{i,j}_k(\mn,\an,\bn)
			\end{equation}
			where
			\begin{equation}
				\label{eq:dinvdefb}    
				\dinv_k^{i,j}(\mn,\an,\bn)=\max\left(
				m_j-m_i-1+k+\epsilon(a_i,a_j)+\epsilon(b_i,b_j),0\right),
			\end{equation}
		\end{subequations}
		and $\epsilon(a_1,a_2)$ is one if $a_1>a_2$, zero otherwise.
	\end{defn}
	For just one label $\an$, we define $\dinv_k(\mn,\an)$ as the result of removing
	$\epsilon(b_i,b_j)$ from \eqref{eq:dinvdefb}, 
	which has the same effect as setting $\bn=(1^n)$.
	
	Recall that an $n\times n$ Dyck path $\pi$ is a path
	in the $n\times n$ grid starting at $(0,0)$
	and ending at $(n,n)$,
	traveling only North and East, and never
	crossing below the main diagonal \cite{haglund2008catalan}.
	It will often be denoted with 1's signifying North
	steps and 0's for East steps, so that the path
	in Figure \ref{fig:dyckpathalpha} would be
	given by $\pi=111001101000$. 
	A Dyck path is determined uniquely by the set
	\[\uio(\pi)=\left\{
	(i,j) : \text{$1\leq i<j\leq n$
		is between the path and the diagonal}
	\right\}.\]
	We will define a partial Dyck path \cite{carlsson2018proof}
	as a pair $(\pi,l)$ where $l$ is an most the number of trailing
	East steps in $\pi$. We will write them by simply leaving off
	that many zeros from the end of $\pi$, in other words
	write $1110011010$ instead of $(\pi,2)$
	for the above Dyck path $\pi$.
	
	It is not hard to show that the following definition
	defines a Dyck path:
	\begin{defn}
		\label{defn:dinv1}
		Fix $k\geq 0$, 
		suppose $(\mn,\an)$ is sorted, and let $i<j$.
		We will say that $i$ attacks $j$ if
		\[m_j-m_i-1+k+\epsilon(a_i,a_j)\geq 0.\]
		Let $\dyckpath=\dyckpath_k(\mn,\an)$ denote the Dyck path such that the elements of 
		$\uio(\dyckpath)$, 
		are the pairs $i<j$ for which 
		$i$ attacks $j$.
	\end{defn}
	We now have that
	\begin{equation}
		\label{eq:dinvdp}
		\dinv_k(\mn, \an, \bn) = 
		\dinv_k(\mn,\an)+\inv_{\dyckpath_k(\mn,\an)}(\bn)
	\end{equation}
	where
	\begin{equation}
		\label{eq:invpi}
		\inv_\dyckpath(\bn)  =
		\#\{(i,j)\in \uio(\dyckpath) : b_i>b_j\}.
	\end{equation}
	We can also determine the composition
	$\alpha(\mn,\an)$ from $\pi_0(\mn,\an)$.
	Let $\alpha=\pitoalpha(\pi)$ be defined by
	drawing lines through all edges, and recording
	the points where they contact the diagonal.
	In other words, $\alpha$ is the composition whose
	endpoints are the result of intersecting the vertical
	and horizontal lines with the diagonal.
	Then it is can be checked that $\alpha(\pi_0(\mn,\an))=\alpha(\mn,\an)$. For arbitrary $k\geq 0$, the composition $\alpha(\mn,\an)$ is a refinement of $\alpha(\pi_k(\mn,\an))$.

	\begin{figure}
		\begin{centering}
			\begin{tikzpicture}
				\draw[help lines] (0,0) grid (6,6);
				\draw[dashed,color=gray] (0,0)--(6,6);
				\draw[-,thick,color=gray] (3,5)--(5,5);
				\draw[-,thick,color=gray] (3,5)--(3,3);
				\draw[-,thick,color=gray] (2,3)--(3,3);
				\draw[-,thick,color=gray] (2,3)--(2,2);
				\draw[-,very thick] (0,0)--(0,1);
				\draw[-,very thick] (0,1)--(0,2);
				\draw[-,very thick] (0,2)--(0,3);
				\draw[-,very thick] (0,3)--(1,3);
				\draw[-,very thick] (1,3)--(2,3);
				\draw[-,very thick] (2,3)--(2,4);
				\draw[-,very thick] (2,4)--(2,5);
				\draw[-,very thick] (2,5)--(3,5);
				\draw[-,very thick] (3,5)--(3,6);
				\draw[-,very thick] (3,6)--(4,6);
				\draw[-,very thick] (4,6)--(5,6);
				\draw[-,very thick] (5,6)--(6,6);
			\end{tikzpicture}
			\caption{The Dyck path $\pi=111001101000$. The boxes
				between the path and the diagonal are
				$\uio(\pi)=\{(1,2),(1,3),(2,3),(3,4),(3,5),(4,5),(4,6),(5,6)\}$, and we have $\pitoalpha(\pi)=(2,1,2,1)$.}
			\label{fig:dyckpathalpha}
		\end{centering}
	\end{figure}
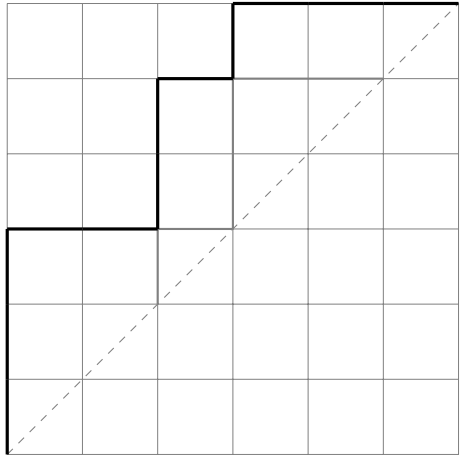

	We have an LLT polynomial 
	\begin{equation}
		\label{eq:csfa}
		\csfa_{\dyckpath}[Y;q]=
		\sum_{\bn} q^{\inv_{\dyckpath}(\bn)}Y_{\bn}.
	\end{equation}
	As we explain below, this polynomial appears as the
	Frobenius character of the equivariant cohomology
	of the regular semisimple Hessenberg variety associated
	to $\pi$, and is the plethystically transformed
	chromatic symmetric function of Stanley, which was proved in \cite{brosnan2018hessenberg,shareshian2012chromatic}.
	It is also the LLT polynomial that appears in \cite{carlsson2018proof}.

	The main theorem of \cite{carlsson2020combinatorial}
	states 
	\begin{thm}
		\label{thm:oldthm1}
		For any $k\geq 1$, we have
		\[\nabla_X^k e_n\left[\frac{XY}{(1-q)(1-t)}\right] 
		= \sum_{[\mn, \an, \bn]}  \frac{t^{|\mn|} 
			q^{\dinv_k(\mn, \an,\bn)}}{(1-q)^n \aut_q(\mn,\an,\bn)}
		X_{\an} Y_{\bn}\]
		\begin{equation}
			\label{eq:oldnabla}
			=  \sum_{[\mn,\an]} \frac{t^{|\mn|}q^{\dinv_k(\mn,\an)}}{(1-q)^n
				\aut_q(\mn,\an)}X_{\an} \csfa_{\pi_k(\mn,\an)}[Y;q].
		\end{equation}
	\end{thm}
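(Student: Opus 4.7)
The plan is to characterize $G_k := \nabla_X^k e_n[XY/((1-q)(1-t))]$ by a short list of properties and verify each for the combinatorial right-hand side. The expression $e_n[XY/((1-q)(1-t))]$ serves as a reproducing kernel: expanded in the modified Macdonald basis in $X$ and $Y$, it takes the Cauchy-type form $\sum_{\lambda\vdash n} \Ht_\lambda(X;q,t)\,\Ht_\lambda(Y;q,t)/w_\lambda(q,t)$ for the Macdonald norms $w_\lambda$. Since $\nabla$ is diagonal in the $\Ht_\lambda$-basis with eigenvalue $q^{n'(\lambda)} t^{n(\lambda)}$, it follows that
\[
G_k(X,Y) \;=\; \sum_{\lambda\vdash n} \frac{q^{kn'(\lambda)}\,t^{kn(\lambda)}\,\Ht_\lambda(X;q,t)\,\Ht_\lambda(Y;q,t)}{w_\lambda(q,t)}.
\]
Three features of this formula uniquely identify $G_k$: (i) symmetry under $X\leftrightarrow Y$; (ii) each $\Ht_\mu(X)$-coefficient is a scalar multiple of $\Ht_\mu(Y)$; (iii) that scalar equals $q^{kn'(\mu)}t^{kn(\mu)}/w_\mu$. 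Reading off Macdonald coefficients, any function satisfying (i)--(iii) must equal $G_k$.

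I would first verify (i) for the RHS. On the indexing set of classes $[\mn,\an,\bn]$, the involution $\an\leftrightarrow\bn$ is a bijection, and inspection of \eqref{eq:dinvdefb} shows that $\dinv_k$ depends on $(\an,\bn)$ only through the symmetric sum $\epsilon(a_i,a_j)+\epsilon(b_i,b_j)$. Consequently, swapping $\an$ and $\bn$ swaps $X_\an Y_\bn$ with $X_\bn Y_\an$ while preserving the statistic and the $q$-automorphism factor, proving $X\leftrightarrow Y$ symmetry of the sum. The $t^{|\mn|}$ weight is unaffected by this involution. Note that this symmetry is not manifest from the formula and is already a non-trivial check.

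The technical heart of the argument is (ii) and (iii), the Macdonald triangularity and eigenvalue-matching. Here I would use the second presentation of the RHS in \eqref{eq:oldnabla}, which pushes the $\bn$-sum into the LLT polynomial $\csfa_{\pi_k(\mn,\an)}[Y;q]$. The advantage is that LLT polynomials, via the Haglund--Haiman--Loehr combinatorial formula, appear as the building blocks of $\Ht_\mu$; the plethystic substitution $Y\mapsto Y/(1-q)$ that mediates this expansion is built in via the $(1-q)^n$ denominator in \eqref{eq:oldnabla}. Grouping the $[\mn,\an]$-sum so as to reproduce $\Ht_\mu(Y)$ via the HHL expansion, one can extract each Macdonald coefficient as a clean function of $\mn$ only and recognize it as the eigenvalue $q^{kn'(\mu)}t^{kn(\mu)}/w_\mu$ by a dominant-weight argument (collapsing to the case where the sorted $\mn$ and $\an$ are forced to a single shape).

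The inner equality in \eqref{eq:oldnabla} is a bookkeeping step: fix $(\mn,\an)$, sum over $\bn$ using the splitting \eqref{eq:dinvdp}, and recognize the resulting inner sum as \eqref{eq:csfa}. I expect the main obstacle to be establishing (ii) and (iii); the monomial/LLT form of the RHS carries no Macdonald information on the nose, and all of the content must be extracted by careful plethystic manipulation of the LLT reformulation together with an inductive or specialization argument to pin down the diagonal coefficient.
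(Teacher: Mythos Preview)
This theorem is not proved in the present paper; it is quoted verbatim as ``the main theorem of \cite{carlsson2020combinatorial}''. The only information the paper gives about its proof is the sentence in the introduction: the result was established ``by establishing that the expected sum satisfies the defining properties of the $\nabla^k$ operator, namely triangularity, symmetry, and leading term.'' Your three-step outline (i)--(iii) is exactly this strategy, so at the level of architecture you are aligned with the original argument.

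That said, your execution of step (i) contains a real gap. You claim that swapping $\an\leftrightarrow\bn$ preserves $\dinv_k$ because the formula \eqref{eq:dinvdefb} is symmetric in $\epsilon(a_i,a_j)+\epsilon(b_i,b_j)$. But Definition~\ref{def:dinv} only applies when $(\mn,\an)$ is \emph{sorted}; swapping the labels forces you to re-sort, and the statistic is \emph{not} preserved orbit by orbit. Concretely, take $n=3$, all $m_i$ equal, $\an=(1,1,2)$, $\bn=(2,1,1)$: one computes $\dinv_k=3k-1$. The swapped orbit $[\mn,\bn,\an]$ has sorted representative with first label $(1,1,2)$ and second label $(1,2,1)$, giving $\dinv_k=3k-2$. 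The $X\leftrightarrow Y$ symmetry does hold for the full sum (it must, the theorem is true), but it is a genuine combinatorial fact requiring a non-obvious bijection or a more indirect argument, not the one-line swap you propose.

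Your remarks on (ii) and (iii) are honest that this is ``the technical heart'' and only gesture at how the HHL expansion of $\Ht_\mu$ should organize the LLT pieces; this is indeed where essentially all of the work in \cite{carlsson2020combinatorial} lies, and nothing you have written would substitute for it. The inner equality in \eqref{eq:oldnabla} via \eqref{eq:dinvdp} and \eqref{eq:csfa} is, as you say, routine bookkeeping. In summary: correct overall plan, matching the cited proof, but the symmetry step as written is false and the triangularity/leading-term steps are not attempted.
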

	
	We extend Definition \ref{def:dinv} to super alphabets by setting
	\begin{equation}\label{eq:oddepsdef}\epsilon(b_1,b_2)=\begin{cases} 1 & 
			\mbox{$b_1 >_1 b_2$ or $b_1=b_2$ is negative, or} \\
			0 & \mbox{otherwise}.
		\end{cases}
	\end{equation}
	We similarly replace
	\[\inv_\pi(\bn)=\sum_{(i,j)\in \uio(\pi)} \eps(b_i,b_j).\]
	Notice that $\pi_k(\mn,\an)$ makes sense for super
	alphabets as well. Then
	\begin{cor}
		\label{cor:supthm}
		Theorem \ref{thm:oldthm1} holds for super alphabets, substituting
		\eqref{eq:oddepsdef} into $\dinv_k$ for negative labels.
	\end{cor}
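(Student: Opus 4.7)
The plan is to reduce Corollary \ref{cor:supthm} to Theorem \ref{thm:oldthm1} via a standardization argument in the spirit of \cite{haglund2005combinatoriala}, Section 4. The modified sign rule \eqref{eq:oddepsdef}, together with the ordering $1 <_1 \bar{1} <_1 2 <_1 \bar{2} <_1 \cdots$, is precisely what is needed so that super labels factor cleanly through their standardization from Definition \ref{def:standardization}.

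First I would establish the super alphabet expansion of the LLT polynomial: grouping super labels $\bn \in \mathcal{A}^n$ by $\sigma = \shuff(\bn)$, the sum
\begin{equation*}
\sum_{\bn \in \mathcal{A}^n} q^{\inv_\pi(\bn)} Y_\bn
\end{equation*}
reorganizes as a sum over $\sigma \in S_n$ of $q^{\inv_\pi(\sigma)}$ weighted by a product of geometric series in the $Y_a$ variables, where $\inv_\pi$ now uses the modified $\eps$. The convention $\eps(b,b) = 1$ exactly for negative $b$ is what ensures that, within a fiber of $\shuff$, positive letters contribute weakly increasing runs while negative letters contribute strictly decreasing runs; on each such fiber $\inv_\pi(\bn)$ is constant and equals $\inv_\pi(\sigma)$, so the super sum collapses onto the positive one and agrees with $\csfa_\pi[Y;q]$ as a symmetric function in $Y$. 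This is the familiar superization identity for LLT polynomials, adapted to the Dyck path statistic.

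Applying this identity to the second form \eqref{eq:oldnabla} of Theorem \ref{thm:oldthm1}, the super alphabet sum over $\bn$ collapses back to the positive-label sum inside $\csfa_{\pi_k(\mn,\an)}[Y;q]$. The first form with $\dinv_k(\mn,\an,\bn)$ then follows from \eqref{eq:dinvdp} after the analogous factorization is applied to the $\an$ labels via the $\eps(a_i,a_j)$ term in $\dinv_k^{i,j}$, using the same super standardization. The main obstacle is the simultaneous bookkeeping of $\an$ and $\bn$ superization: one must verify that Definition \ref{defn:dinv1} with the modified $\eps(a_i,a_j)$ still yields a valid Dyck path $\pi_k(\mn,\an)$, and that the attack relation is compatible with standardizing $\an$ in the sense that $\pi_k(\mn,\an)$ agrees with $\pi_k$ applied to the standardized data, so that the two layers of superization do not interfere. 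Once this compatibility is verified, the corollary follows from the positive-label case of Theorem \ref{thm:oldthm1}.
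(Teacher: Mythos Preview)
Your first step, superizing the $\bn$ labels by recognizing that the super sum collapses to $\csfa_{\pi}[Y;q]$ via standardization, is exactly what the paper does.

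For the $\an$ labels, however, the paper takes a different and much shorter route. Rather than attempting a direct standardization argument on the $\eps(a_i,a_j)$ terms, the paper simply invokes the symmetry of the left-hand side under $X\leftrightarrow Y$: since $\nabla_X^k e_n\!\left[\tfrac{XY}{(1-q)(1-t)}\right]$ is symmetric in $X$ and $Y$, once the identity is known with super $\bn$ and ordinary $\an$, swapping the roles of the two alphabets immediately yields the super $\an$ case. This bypasses all of the compatibility checks you flag as the ``main obstacle.''

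Your direct approach is plausible in principle, but note that the $\an$ labels play a more structural role than the $\bn$ labels: they enter the sorted representative of $[\mn,\an]$, they determine the Dyck path $\pi_k(\mn,\an)$, and they appear in the denominator $\aut_q(\mn,\an)$. A standardization argument on $\an$ would have to track all three simultaneously, not just the $\dinv$ statistic; you mention the Dyck path compatibility but not the automorphism factor. None of this is insurmountable, but it is genuinely more work than the paper's two-line proof, and you have not actually carried it out. The $X\leftrightarrow Y$ symmetry is the key shortcut you are missing.
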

	
	\begin{proof}
		The result holds for super labels in $\bn$
		using equation \eqref{eq:oldnabla}, and the desired 
		statement about $\csfa_\pi[Y;q]$. We then apply the
		symmetry between the $X$ and $Y$ variables.
		
	\end{proof}
	
	\begin{example}
		Let $\pi=110100$. Then we have
		\[\csfa_{\pi}=
		m_{{3}}+ \left( 2\,q+1 \right) m_{{2,1}}+ \left( {q}^{2}+4\,q+1
		\right) m_{{1,1,1}}.\]
		We can see that the coefficient of $m_{2,1}$ in $\omega \csfa_{110100}$ is
		\[q^{\dinv_\pi((\bar{1},\bar{1},\bar{2}))}+q^{\dinv_\pi((\bar{1},\bar{2},\bar{1}))}+
		q^{\dinv_\pi((\bar{2},\bar{1},\bar{1}))}
		=q^2+2q.\]
	\end{example}
	
	\subsection{Affine permutations}
	\label{sec:affperm}
	
	We have the group of (extended) affine permutations
	\begin{equation}
		\label{eq:affperm} 
		W=\left\{w: \mathbb{Z}\rightarrow \mathbb{Z}:
		w_{i+n}=w_i+n\right\}.
	\end{equation}
	Each such permutation is uniquely determined by its
	values on the elements $\{1,...,n\}$, and we will
	often describe it elements in window notation
	$w=(w_1,...,w_n)$.
	We have a homomorphism $W\rightarrow \Z$
	given by $w\mapsto d$, where $d$ is the unique number satisfying $w_1+\cdots+w_n=dn(n+1)/2$,
	whose kernel $W_0$ is the usual group of affine permutations, which is the affine Weyl group 
	in the $SL_n$ case. 
	We then have the decomposition into $W_0$ orbits
	\[W=\bigsqcup_{d\in \Z} W_d,\quad W_d=W_0 \rho^d=\rho^dW_0,\]
	where $\rho\in W$ is the rotation element
	$\rho_i=i+1$.
	
	The Bruhat order on $W$ is defined as the usual order
	on $W_0$ which is generated by reflections, and so that
	elements in different $W_d$ components are incomparable,
	and $v\rho^d\leq w\rho^d$ if and only if $v\leq w$.
	We have the usual inversion statistic
	\[\inv(w)=\sum_{1\leq i<j\leq n} \inv^{i,j}(w),\quad
	\inv^{i,j}=\max\left(
	\lceil (w_i-w_j)/n \rceil,
	\lceil(w_j-w_i)/n-1\rceil\right),\]
	which is independent of $\rho$,
	$\inv(w\rho^d)=\inv(\rho^dw)=\inv(w)$.

	If $J\subset \{0,...,n-1\}$ is nonempty,
	we have the finite subgroups $W_J\subset W$ generated by $s_i$ for $i\notin J$.
	The subsets $J\subset \{1,...,n-1\}$
	are in bijection with
	compositions $\alpha=(\alpha_1,...,\alpha_l)$ of $n$
	in such a way that $j \in J$ if $j,j+1$ are in different blocks of $\alpha$,
	and in this case $W_J=S_\alpha$ is just the Young
	subgroup of $S_n\subset W$. The other $W_J$
	are conjugates of the Young subgroups
	by the rotation element $\rho$.
	We have a Bruhat order on arbitrary double
	cosets $W_{J}\backslash W/W_{J'}$, which is
	the same as the order induced by the map
	of posets $W_J\backslash W/W_{J'}\subset W$
	which selects the unique minimal representative
	$w_-$ of each coset, or alternatively the unique maximal one $w_+$, which will be denoted
	$w_{\pm}\in W_{J}\backslash W/W_{J'}$.

	We now explain a correspondence between affine permutations and the combinatorial statistics of the last section. Let $W_+$ denote the set 
	of \emph{positive} affine permutations
	\begin{equation}
		\label{eq:wbij}
		W_+=\left\{w\in W:
		w(\mathbb{Z}_{\geq 1})\subset \mathbb{Z}_{\geq 1}\right\},
	\end{equation}
	in other words those affine permutations whose window
	notation contains only positive integers.
	In \cite{carlsson2020combinatorial}, 
	we considered a bijection 
	\[\paff:\{[\mn,\an,\bn]: X_\an Y_\bn=X^\alpha Y^\beta \}\longleftrightarrow
	S_\beta \backslash W_+ / S_\alpha.\]
	for each pair of compositions $\alpha,\beta$.
	It is defined by $\paff(\mn,\an,\bn)=S_\beta \paff_0(\mn,\an,\bn) S_\alpha$, where
	\begin{equation}
		\label{eq:paff}
		\paff_0(\mn,\an,\bn)=
		\shuff_{>}(\revl(\bn)) \trans{\mn}
		\shuff_{<}(\an)^{-1}
	\end{equation}
	is a maximal representative of the double coset.
	Here $\trans{\mn}=(n+m_1 n,...,1+m_nn)$ is the maximal representative of its coset in $S_n\backslash W_+ /S_n$,
	and $\revl(\bn)$ is the result of writing
	$\bn$ in the reverse order. 
	If no $\bn$ label is specified, we will define
	$\paff(\mn,\an)=\paff(\mn,\an,(1^n))\in 
	S_n\backslash W_+/S_\alpha$.
	
	We can now translate some of the constructions from
	the last section in terms of the corresponding
	double coset under $\paff$. 
	First, we can recover the $\dinv$ statistic
	by $\dinv_k(\mn,\an,\bn)=\dinv_k(w_+)$ where
	$w_+=\paff_0(\mn,\an,\bn)$, and 
	\begin{equation}
		\label{eq:dinvw}
		\dinv_k(w) =
		\sum_{1\leq i<j\leq n}
		\max\left(k-\inv^{i,j}(w^{-1}),0\right)
		= k \binom{n}2 - \sum_{1\leq i<j\leq n}
		\inv_k^{i,j}(w^{-1}),
	\end{equation}
	where $\inv_k^{i,j}(w^{-1})=\min(k,\inv^{i,j}(w^{-1}))$,
	which also holds if there is no third label, 
	$\bn=(1^n)$.
	We also have a rule
	\begin{equation}
		\dinv_k(\mn,-\an,\bn)=\dinv_k(w)
	\end{equation}
	where $w$ is the minimal element in the
	maximal right coset
	$w_+S_\alpha \subset \paff(\mn,\an,\bn)$,
	for $w_+=\paff_0(\mn,\an,\bn)$.
	The composition 
	$\alpha'=\alpha(\mn,\an)$ 
	is the unique one with the property that
	$S_{\alpha'}$ 
	is the stabilizer of $S_n$
	acting on $w_+S_\alpha$.
	Finally, the Dyck path 
	$\pi=\pi_k(\mn,-\an)$
	is the one determined by
	\begin{equation}
		\uio(\pi)=\left\{(i,j):
		\inv^{i,j}(w_-^{-1})< k\right\}
		\label{eq:wtodp}
	\end{equation}
	where $i<j$ are in different blocks of $\alpha'$,
	where $w_-$ is the minimal element of 
	$\paff(\mn,\an)$ in $S_n \backslash W_+/S_\alpha$.
	It will be denoted $\pi_k(S_n w S_\alpha)$.
	
	\section{GKM spaces}
	
	We review some necessary background on 
	equivariant Borel-Moore homology and
	GKM spaces. We begin wih the general setup,
	and then present the relevant examples.
	
	\subsection{General setup}
	\label{sec:gkm}
	Let $X$ be a (possibly singular)
	complex projective variety variety together
	with an action of an algebraic torus $T=(\C^*)^d$.
	Suppose that $X$ satisfies the Goresky-Kottwitz-Macpherson
	(GKM) conditions, namely that the fixed point set
	$X^{T}$ is finite, there are finitely
	many one-dimensional orbits, and $X$
	is equivariantly formal.
	The theorem of 
	Goresky-Kottwitz-Macpherson 
	\cite{goresky1997koszul} 
	says that the equivariant 
	cohomology $H^*_T(X)$, which in this
	paper has complex coefficients, 
	injects into the cohomology
	of the fixed point set
	$H^*_T(X)\hookrightarrow H^*_T(X^T)$, and that
	the image can be described by algebraic conditions
	determined by the weighted moment graph.

	Let $H_*^T(X)$ be the equivariant Borel-Moore
	homology, as defined in \cite{brion1998equivariant,edidin96equivariant}.
	Then as in these references, we have that
	$H_*^T(X)$ is free over 
	$H_T^*(pt)=\C[t_1,...,t_d]$ which will
	be denoted by $S$,
	and the localization map
	$i_*:H_*^T(X^T)\rightarrow H_*^T(X)$ becomes
	an isomorphism after inverting finitely many
	characters of $T$. The homology may
	then be identified as an $S$-submodule of the localization
	\[H_*^T(X^{T})\otimes_{S}
	F= F\cdot X^T=
	\bigoplus_{p \in X^T} F\cdot p\]
	via $i_*^{-1}$, where $F=\C(t_1,...,t_d)$
	is the field of fractions of $S$.
	Note that the homological grading is
	in the negative direction, 
	so that multiplication by $S$ action raises degree.
	Suppose that $X$ is paved by 
	$T$-invariant affines, meaning that we have
	$X=\bigsqcup_i U_i$, where $U_i \cong \mathbb{A}^{d_i}$
	for some $d_i$, and
	\begin{equation}
		\label{eq:indvariety}
		X_i=\overline{U}_i\subset
		\bigcup_{j\leq i} U_j
	\end{equation}
	for some partial order on the indices. 
	Then by \cite{graham2001positivity}
	Proposition 2.1, the fundamental classes 
	$\hovar_i=[X_i]_T$ freely generate $H_*^T(X)$
	as a module over $S=H^T_*(pt)$.
	By the above assumptions about $X$
	and the same proposition, there are dual
	generators $\covar_i \in H_T^*(X)$
	satisfying $\covar_i(\hovar_j)=\delta_{i,j}$.

	Let $G=(V,E,\wf)$ be the moment graph
	of $X$, where $V=X^T$ is the fixed point
	set, $E$ is the set of one-dimensional
	orbits, and $\wf:E\rightarrow S$ 
	is the weight function of the action.
	In general, the edge set can directed in
	more than one way by
	choosing a sufficiently
	generic one-dimensional
	subtorus $T'\subset T$,
	and deciding that the edge points
	towards the South pole of the corresponding
	one-dimensional orbit, where the South
	pole is determined as the attracting
	point with respect to $T'$, see
	\cite{guillemin2006description,
		tymoczko2005introduction,
		tymoczko2008permutation,
		guillemin2003existence}. This determines a
	potentially different partial order on $V$.
	For instance, while the Bruhat order is the standard
	one to use for flag varieties, one may also consider the
	conjugates of this order by permutations, which will
	be an important observation in Section \ref{sec:thmb}.

	The homological and cohomological generators
	then satisfy
	\begin{equation}
		\hovar_p=\sum_{q\leq p} c_{p,q} q,\quad
		\covar_p=\sum_{q\geq p} d_{p,q} q
	\end{equation}
	as elements of $F\cdot V$. Moreover,
	the leading terms are given by
	\[c_{p,p}=d_{p,p}^{-1},
	\quad d_{p,p}=\lt(p)=
	\prod_{e\in \outedges(p)} 
	\wf(e),\]
	where $\outedges(p)=\{(p,q):q\in \outneighbors(p)\}$ 
	is the set
	of outgoing edges, and $\outneighbors(p)$ is the
	set of endpoints. The leading term
	follows from combining the GKM relations
	with the triangularity, and in some cases 
	the other coefficients can 
	be deduced using the type of argument 
	used by Knutson and Tao in \cite{knutson2001puzzles}. 
	The classes $\covar_p$ are often referred to as canonical classes, and if some conditions known as
	the Palais-Smale conditions on the GKM
	data are satisfied, they are unique \cite{guillemin2003existence}.
	One condition that forces the uniqueness is
	if the degree strictly increases along increasing
	chains in the poset determined by $(V,E)$,
	which is true in the case of the affine
	flag variety, but not the regular semisimple Hessenberg
	variety or the unramified affine Springer fiber $Y_k$ defined below.
	
	\subsection{The affine flag variety}
	\label{sec:affgkm}
	
	We recall the construction of the affine flag variety,
	referring to \cite{kumar2002kac,lam2010affine,lam2014schur} for more details.
	
	Let $\mathcal{K}=\C((z))$ and $\mathcal{O}=\C[[z]]$,
	and consider the groups $G(\mathcal{O})\subset G(\mathcal{K})$
	for $G=GL_n$.
	We have the affine Weyl group,
	which is the set of affine permutations from \eqref{eq:affperm}.
	It acts on $\mathcal{K}^n$ by $w\cdot e_j=e_{w_j}$
	for $j\in \Z$,
	where $e_{i+dn}=z^{-d} e_i\in \mathcal{K}^n$ for 
	$i\in \{1,...,n\}$, identifying
	it as a subgroup $W\subset G(\mathcal{K})$.
	If $\alpha$ is a composition, let 
	$P_\alpha\subset GL_n$
	denote the corresponding parabolic subgroup,
	so that $P_{(1,...,1)}=B$, the upper triangular
	matrices.
	If $J\subset \{0,...,n-1\}$ is nonempty
	as in Section \ref{sec:affperm}, we
	have the parahoric subgroup
	$I_J\subset G(\mathcal{K})$ 
	containing the group $W_J$, so that
	$I_{\{0,...,n-1\}}$ is the usual Iwahori
	subgroup, and $I_{\{0\}}=G(\mathcal{O})$.
	If $J\subset \{1,...,n-1\}$ corresponds
	to the composition $\alpha$, then
	$I_J$ is the inverse image of $P_\alpha$ 
	under the map
	$\pi:G(\mathcal{O})\rightarrow G$ which 
	evaluates at $z=0$. The others may be
	obtained by conjugating these subgroups by $\rho$.

	Let $Y^J=G(\mathcal{K})/I_J$ denote the affine flag variety.
	If no parabolic is specified,
	let $Y=Y^{\{0,...,n-1\}}$ denote the one corresponding
	to the standard Iwahori subgroup, i.e. the full flag variety,
	and denote $X=Y^{\{0\}}$ for the affine Grassmannian.
	Then parametrically we have that
	$X=\left\{V\subset \mathcal{K}^n\right\}$
	where $V$ is an $\mathcal{O}$-submodule, and there
	exists $N>0$ so that
	$z^{-N} \mathcal{O} \subset V \subset z^N\mathcal{O}$.
	The full flag variety may be described by
	\[Y=\left\{V_0\subset \cdots \subset V_n: V_0=zV_n\right\},\]
	where each $V_i\in X$, and $\dim(V_{i+1}/V_i)=1$.
	We may also consider a flag as periodic and infinite
	by setting $V_{i}=zV_{i+n}$, and so refer to $V_i$
	for any $i\in \Z$.
	The image of a Weyl group element $w\in W$ 
	determines a flag by
	$V_i= \langle e_{w_j}: j\leq i\rangle$
	where $e_i$ was defined above.
	Each $Y^J$ decomposes as a disjoint union
	\[Y^J=\bigsqcup_{d\in \Z} Y_{SL_n}^{\rho^d(J)},\]
	where $Y^J_{SL_n}$ is the partial affine flag variety for $SL_n$.
	Each $\rho^d(J)$ is obtained by applying $\rho$ to the
	elements of $J$ modulo $n$, and the Springer action is
	conjugated by $\rho$ in the compatible way.

	We now describe the equivariant homology and
	cohomology of $Y^{J}$.
	There is an action on $Y^J$ by the extended torus
	$\tilde{T}\cong (\mathbb{C}^*)^{n}$, which is the small
	torus $T\subset GL_n$ together with loop rotation.
	The ground ring, which is the equivariant cohomology of
	a point with respect to $T,\tilde{T}$ are given by 
	$S=\C[x_1,...,x_n]$ and 
	$\tilde{S}=\C[x_1,...,x_n,\eps]$ respectively,
	where $x_1,...,x_{n}$ are the weights of $T$, 
	and $\eps$ is the parameter corresponding
	to loop rotation. We have the root elements
	\[\alpha_{i}=\alpha_{i,i+1},\quad 
	\alpha_{i,j}=\lambda_i-\lambda_j,\quad \lambda_i=x_{\bar{i}}+\frac{i-\bar{i}}{n}\eps\]
	where $\bar{i}$ is the unique element of $\{1,...,n\}$
	which is congruent to $i$ modulo $n$. 
	Then $\awgsl$ acts by automorphisms of $\tilde{S}$
	in such a way that $w(\lambda_i)=\lambda_{w_i}$.
	
	The fixed points of both $T,\tilde{T}$ are given by
	$V=W/W_J$ via the inclusion described above,
	and the corresponding point will be labeled $p_w$.
	The edges in the GKM graph are determined by
	\begin{equation}
		\label{eq:outneighborsflag}
		\outneighbors(w)=
		\left\{v \bruleq w : w=t_{i,j}v\right\}.
	\end{equation} 
	for some affine transposition $t_{i,j}$,
	which is an element of $\awgsl$
	that switches two elements $i,j\in \mathbb{Z}$
	which are not congruent modulo $n$,
	and is necessarily in $W_0$.
	We have the corresponding definition 
	$v,w$ are replaced
	by cosets $vW_J,wW_J$, and in fact is also the same
	as replacing the cosets by minimal representatives
	in the Bruhat order.
	The weight function is determined by
	\begin{equation}
		\label{eq:affwf}
		\wf(t_{i,j}w,w)=\alpha_{i,j},
	\end{equation}
	whose degree is the affine inversion number $\inv(w)$ defined in Section \ref{sec:affperm}.

	The cells in the affine paving 
	\eqref{eq:indvariety} are
	the Schubert cells $\Omega^J_w=I w I_J$.
	The Schubert varieties are the closures
	\[V_w=\overline{\Omega}_w=\bigsqcup_{v\leq w} \Omega_v.\]
	The homology is defined by
	treating $Y^J$ as an ind-variety using
	the Bruhat filtration, see \cite{kumar2002kac}.
	If $P\subset W/W_J$ is a finite lower set, meaning a 
	finite subset for which 
	\[v\leq w,\ w\in P \Rightarrow v\in P,\]
	then the subspace
	\[V^J_P=\bigcup_{u W_J \in P} V^J_{u W_J}=
	\bigsqcup_{uW_J\in P} \Omega^J_{u W_J}\] 
	is a closed subvariety of $Y^J$ which is
	GKM with respect to the action of $\tilde{T}$.
	We then have the direct limit
	\begin{equation}
		\label{eq:affhomologylim}    
		H_*^{\tilde{T}}(Y^J)=
		\lim_{\rightarrow} H_*^{\tilde{T}}
		\left(V^J_P\right)
	\end{equation}
	over all lower sets $P\subset W/W_J$.
	
	We have commuting left and right actions called \emph{dot} and
	\emph{star} of $W$ on $H_*^{\tilde{T}}(Y)$
	induced by the action by multiplication on $F\cdot \awgsl$
	via $i_*^{-1}$, in which the dot action
	acts on the ground field $\tilde{F}$ by the automorphisms
	described above. 
	Moreover, we can recover the homology of both the $GL_n$
	version $Y$ and the parabolics $Y^J$ from the 
	$SL_n$ case $Y_{SL_n}$, via the formula
	\begin{equation}
		\label{eq:affhat}
		H_*^{\tilde{T}}
		(Y^{J})=
		\bigoplus_d \rho^d H_*^{\tilde{T}}(Y_{SL_n}^{J}),\quad
		H_*^{\tilde{T}}(\affsl^J)\cong
		H_*^{\tilde{T}}(\affsl)^{W_J},
	\end{equation}
	the second isomorphism being one of the 
	fundamental properties of the Springer action.
	Here $M^{W_J}$ are the invariants with respect 
	to $W_J\subset W_0$ 
	acting on the right by the star 
	action twisted by the star action,
	which is defined by identifying
	$W_J=\rho^k S_{\alpha} \rho^{-k}$ for some $k$.
	The inverse of the corresponding generators 
	$A_{w S_\alpha}^J$ are the result of symmetrizing
	$A_w$ by $W_J$, when $w$ is minimal in the 
	coset $W/W_J$.

	Though we will not need
	this definition, the cohomology is defined as
	the subset of the inverse limit
	\begin{equation}
		\label{eq:indhomology}
		H^*_{\tilde{T}}(X)\subset \lim_{\leftarrow} H^*_{\tilde{T}}(X_w)
	\end{equation}
	as graded $\tilde{S}$-modules \cite{graham2001positivity}.
	Equivalently, one may take cohomology
	to be all those elements $\covar$ 
	in the (non graded)
	inverse limit such that $\covar(A_w)=0$
	for all but finitely many $w$.
	For $\affsl$, this agrees with 
	the first definition, 
	and is more useful when the cohomology
	is defined algebraically as the dual
	module to the nil Hecke algebra
	\cite{lam2014schur}, defined
	in Section \ref{sec:nilhecke} below.
	In either case, there are canonical elements
	$\covar_v\in H_T^*(Y)$ which restrict
	to $\covar_v$ on each $H_T^*(V_w)$, 
	and which are the Kostant-Kumar basis \cite{kostant1986nil}.

	\subsection{The nil Hecke ring}
	
	\label{sec:nilhecke}
	We now recall the nil Hecke algebra $\aaf$ of Kostant and Kumar, which is an algebraic presentation of
	$H_*^{\tilde{T}}(Y)$, for which we refer to  
	\cite{kostant1986nil,lam2014schur}.
	Let $\tilde{F}=\C(\xn,\epsilon)$ be the fraction field
	field of $\tilde{S}$. Then we have a 
	left and a right action of $W$ on the free 
	$\tilde{F}$-vector space $\tilde{F}\cdot W$, where the
	left action intertwines the scalars by
	\[w f(\lambda_1,...,\lambda_n)= f(\lambda_{w_1},...,\lambda_{w_n})w,\]
	and the weights $\lambda_i$ are the ones
	defined above.

	Now consider the action of the operators
	\[A_i=\frac{1-s_i}{\alpha_i} \in \End\left(\tilde{F}\cdot W\right),\]
	for $i\in \{0,...,n-1\}$,
	where $s_i\in \awgsl$ 
	is the simple transposition.
	They define the action of Kostant and Kumar's
	nil Hecke ring $\mathbb{A}_{af}$ \cite{kostant1986nil}
	in type A, which 
	is the $\C$-algebra generated by 
	$(A_0,...,A_{n-1})$ and $\tilde{S}$,
	subject to the following relations:
	\begin{align}
		\label{eq:aafrel}
		A_i \lambda = (s_i\cdot \lambda)A_i+
		(\alpha_i^{\vee},\lambda)\cdot 1,& \nonumber \\
		A_iA_i=0,& \nonumber \\
		A_i A_j A_i\cdots=A_j A_i A_j \cdots , &\mbox{ if }
		s_i s_j s_i\cdots=s_j s_i s_j\cdots.
	\end{align}
	Here $\alpha_i^{\vee}$ is the dual root.
	For more details, see 
	\cite{lam2014schur} Chapter 3, Section 6.
	
	The following summarizes the connection between
	$\aaf$ and the homology of the affine flag variety
	in type A:
	\begin{prop}
		\label{prop:aaf}
		If $w=s_{i_1}\cdots s_{i_l}\rho^d\in W$ is a reduced word decomposition, then we have that
		$\varphi(A_{i_1}\cdots A_{i_l}\rho^d)=A_w$ 
		under the isomorphism
		$\varphi:\tilde{F} \cdot W \rightarrow \tilde{F}\cdot V$ which sends $w$ to the fixed point $p_w$.
		In particular, we have an isomorphism
		\[H_*^{\tilde{T}}(Y)=\bigoplus_d \rho^d \aaf\]
		of $\tilde{S}$
		modules which intertwines the left and right
		action of $W$.
	\end{prop}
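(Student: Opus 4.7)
The plan is to reduce to the classical Kostant-Kumar theorem on the affine flag variety in the $SL_n$ case, and then to extend to the $GL_n$ case by keeping track of the rotation element $\rho$. Using \eqref{eq:affhat} with $J = \{0,\ldots,n-1\}$, one has $H_*^{\tilde T}(Y) = \bigoplus_d \rho^d H_*^{\tilde T}(\affsl)$, and any element $w \in W$ has a unique expression $w = w_0 \rho^d$ with $w_0 \in W_0$, so a reduced decomposition $w = s_{i_1} \cdots s_{i_l} \rho^d$ produces a reduced word $w_0 = s_{i_1} \cdots s_{i_l}$ in $W_0$. It therefore suffices to prove that $\varphi(A_{i_1} \cdots A_{i_l}) = A_{w_0}$ for every reduced word in $W_0$, and then to twist by $\rho^d$ and sum.

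I would proceed by induction on the length $\ell(w_0)$. The base case $w_0 = e$ asserts $\varphi(1) = A_e = p_e$, which is immediate since $V_e$ is a single fixed point. For the inductive step with $w_0 = s_i v$ and $\ell(w_0) > \ell(v)$, I would show that under the localization $\varphi$ the nil Hecke operator $A_i = (1 - s_i)/\alpha_i$ realizes the pushforward-pullback $\pi_i^* \pi_{i,*}$ along the parabolic projection $\pi_i : Y \to Y^{\{i\}}$. This is a direct localization computation: on a fixed point $p_u$, the value of $A_i \cdot p_u$ involves only $p_u$ and $p_{s_i u}$, divided by the weight $\alpha_i$ of the one-dimensional orbit joining them, which is exactly the standard GKM formula for $\pi_i^* \pi_{i,*}$ in view of \eqref{eq:affwf}. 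Combined with the geometric fact that $\pi_i^* \pi_{i,*} [V_v]_T = [V_{s_i v}]_T$ when $\ell(s_i v) > \ell(v)$, this completes the induction.

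Next, I would verify that the nil Hecke relations \eqref{eq:aafrel} hold for the algebraic operators $A_i$. Together with Matsumoto's theorem, this shows the product $A_{i_1} \cdots A_{i_l}$ is independent of the reduced decomposition chosen, so $\varphi(A_{i_1} \cdots A_{i_l}) = A_{w_0}$ is a well-defined element of $H_*^{\tilde T}(\affsl)$. Freeness of $\aaf$ over $\tilde S$ with basis indexed by $W_0$ (from Kostant-Kumar), together with freeness of $H_*^{\tilde T}(\affsl)$ over $\tilde S$ with basis $\{A_{w_0}\}_{w_0 \in W_0}$ coming from the Schubert affine paving, then force the map to be an isomorphism. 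Summing over $d$ and keeping track of the twist by $\rho$ yields the decomposition $H_*^{\tilde T}(Y) = \bigoplus_d \rho^d \aaf$, and compatibility with the left and right $W$-actions follows at once since both sides are constructed from the same $W$-actions on $\tilde F \cdot W$.

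The main obstacle is the careful matching between the algebraic action of $A_i$ on $\tilde F \cdot W$ and the geometric divided-difference operator; this amounts to a fixed-point localization computation along the one-dimensional orbit of the moment graph joining $p_u$ and $p_{s_i u}$. Once this comparison is in place, the rest of the argument is formal and amounts to translating the standard nil Hecke algebra formalism from the setting of the affine Weyl group $W_0$ to the extended affine Weyl group $W$ by adjoining $\rho^{\pm 1}$.
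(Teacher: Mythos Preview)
The paper does not actually prove this proposition: it is introduced with ``The following summarizes the connection between $\aaf$ and the homology of the affine flag variety in type A'' and is treated as background, with the reader referred to \cite{kostant1986nil,lam2014schur} for details. So there is no proof in the paper to compare against; your proposal is a sketch of the standard argument behind those references.

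Your outline is correct and is essentially the classical Kostant--Kumar argument adapted to the extended affine Weyl group. The reduction to $W_0$ via \eqref{eq:affhat}, the induction on length, the identification of $A_i$ with the push--pull along $\pi_i$, and the appeal to Matsumoto's lemma together with freeness are exactly the ingredients one finds in \cite{kostant1986nil} or in the exposition of \cite{lam2014schur}. One small point of care: the operator $A_i$ here acts on the \emph{left} on $\tilde F\cdot W$, so when you match it with $\pi_i^*\pi_{i,*}$ you should check that the relevant one-dimensional orbit is the one joining $p_u$ to $p_{s_i u}$ (left multiplication), with weight $\alpha_i$ as in \eqref{eq:affwf}; this is consistent with how the paper sets up the left (dot) action. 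With that in hand your localization computation goes through, and the extension by $\rho^d$ is purely formal since $\rho$ normalizes the set of simple reflections and the Schubert decomposition.
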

	The result also holds with $T$ in place of $\tilde{T}$,
	though in this case the flag variety is not GKM.
	Using \eqref{eq:affhat}, we may recover the homologies
	of $H_*^{\tilde{T}}(Y^J)$.
	
	\begin{example} 
		A typical basis element 
		\[A_{(1,6,2)}= A_{2}A_{1}A_{0} \rho^1\in \aaf\rho^1\subset H^{\tilde{T}}_*(Y)\] 
		is given by
		\begin{align}
			&\nonumber
			{\frac {p_{(3,2,4)}}{ \left(\eps+x_{{1}}-x_2 \right)  \left( x_{{1}}
					-x_{{3}} \right)  \left( x_{{2}}-x_{{3}} \right) }}+{\frac {p_{(2,4,3)}
				}{ \left(\epsilon+x_{{1}} -x_3\right)  \left( x_{{1}}-x_{{2}} \right) 
					\left( x_{{2}}-x_{{3}} \right) }}-\\
			\nonumber & {\frac {p_{(2,3,4)}}
				{\left(\epsilon+x_{{1}} -x_3\right)  \left( x_{{1}}-x_{{2}} \right)  \left( x_{{2}}-x_{
						{3}} \right) }}+{\frac {p_{(1,6,2)}}{ \left(\epsilon+x_{{3}}-x_2
					\right)  \left( x_{{1}}-x_{{3}} \right)  \left( x_{{2}}-x_{{3}}
					\right) }}-\\
			\nonumber & {\frac {p_{(1,5,3)}}{ \left(\epsilon+x_{{2}} -x_3\right)
					\left( x_{{1}}-x_{{2}} \right)  \left( x_{{2}}-x_{{3}} \right) }}+
			{
				\frac {p_{(1,3,5)}}{ \left(\eps+x_{{2}} -x_3\right)  
					\left( x_{{1}}-
					x_{{2}} \right)  \left( x_{{2}}-x_{{3}} \right) }}-\\
			\label{eq:ak162} & {\frac {p_{(1,2,6)}
				}{ \left(\eps+x_{{3}}-x_2 \right)  \left( x_{{1}}-x_{{3}} \right) 
					\left( x_{{2}}-x_{{3}} \right) }}-{\frac {p_{(3,4,2)}}{ \left( -x_{{2}
					}+\eps+x_{{1}}\right)  \left( x_{{1}}-x_{{3}} \right)  \left( x_{{2}}-x_{
						{3}} \right) }}.\nonumber
		\end{align}
		Notice that $(1,6,2)$ is minimal in 
		$W/W_{\{2\}}$, so that $A_{(1,6,2)}$
		maps to a generator of 
		$H_*^{\tilde{T}}(Y^{\{2\}})=
		H_*^{\tilde{T}}(Y^{\{0\}})\rho$.
		On the other hand, it maps to zero in
		$H^{\tilde{T}}_*(Y^{\{0,1\}})$ for dimension
		reasons
		since $(1,6,2)$ is not minimal in $W/W_{\{0,1\}}$.
		
	\end{example}
	
	\subsection{The unramified affine Springer fiber}
	
	\label{sec:unramgkm}

	The affine Springer fiber \cite{lusztig1991fixed}
	associated to an element $\gamma \in \Lie(G(\mathcal{K}))$ is
	\[\aff_{\gamma}^J=\left\{gI_J\in Y^J:
	g^{-1} \gamma g \in \Lie(I_J)\right\}.\] 
	We will be interested in the \emph{unramified}
	affine Springer fiber $Y_k\subset Y$ 
	of \cite{goresky2004unramified} associated to the element
	$\gamma_k=\diag(a_1z^k,...,a_nz^k)$, where 
	$a_i$ are distinct complex numbers and $k\geq 1$.
	We will also write $Y^\alpha_k$ to denote the corresponding
	parabolics $J=I_\alpha$ as for the flag variety.
	Then the action of the big torus $\tilde{T}$ on $Y$ preserves $Y_k$, and the fixed points are all of $W$.
	It was shown in \cite{goresky2003purity} 
	that $Y^J_{k}$ is paved by 
	affine spaces given by the intersections
	$Y^J_k \cap \Omega^J_{v W_J}$. We have the closures 
	$\overline{\Omega^J_{v W_J} \cap Y^J_k}$,
	which are strictly contained in the intersections 
	$Y^J_k\cap V^J_{v W_J}$. As above, we will drop the parahoric
	indices $J$ to denote the full flag version.
	
	For a finite lower set $P\subset W/W_J$, 
	the subvarieties $Y_k \cap V_P$ are GKM with respect
	to $\tilde{T}$, and we now describe the GKM graph, referring to \cite{goresky2004unramified}.
	An explicit description in the Grassmannian case
	for $GL_n$ was also explained
	in \cite{kivinen2020unramified}.
	The set of vertices is given by
	$V=\awgsl$, the entire Weyl group. We have
	\begin{equation}
		\label{eq:edgesasp}
		\outneighbors_k(v W_J)=\left\{u W_J \bruleq v W_J:
		u_-= t_{a,b}v_-=v_- t_{i,j},\ 
		|i-j|<kn\right\}.
	\end{equation} 
	where $u_-,v_-$ are the minimal representatives.
	The statistic $|i-j|$ does not
	depend on the representatives $\{i,j\}$ or their order,
	and is called the height of the 
	transposition. Then the GKM graph of $Y_k$
	is the one for which the outgoing neighbors
	of $v W_J$ are given by $\outneighbors_{k}(v W_J)$.
	Notice that we obtain the set
	of neighbors $\outneighbors(v W_J)$ for the affine flag
	variety as $k\rightarrow \infty$. 
	The leading terms $LT_k(v)$ is also
	determined by the weight function 
	\eqref{eq:affwf}.
	Just as in \eqref{eq:outneighborsflag},
	the parabolic versions are the same as the ones obtained
	by taking minimal representatives
	of the cosets $uW_J,vW_J$. We will similarly denote by 
	$\inv_{k}(v W_J)=|N^+_k(v W_J)|$ 
	the number of outgoing edges.

	The homology is defined as before as the direct limit
	\begin{equation}
		\label{eq:indhomologyk}
		H_*^{\tilde{T}}(Y^J_{k})=
		\lim_{\rightarrow} H_*^{\tilde{T}}(V^J_{P}\cap Y^J_{k}),
	\end{equation}
	over lower sets $P\subset W/W_J$,
	and we have the basis elements by 
	$A^J_{k,v W_J}=[V^J_{v W_J} \cap Y^J_{k}]\in H_*^{\tilde{T}}(Y^J_k)$.
	The natural map $H_*^{\tilde{T}}(Y_k)\rightarrow H_*^{\tilde{T}}(Y)$
	is injective, since they are both contained in the
	fixed point set, and is compatible with the left
	and right actions of $W$.
	
	As in the case of
	the flag variety, we can obtain the parabolic
	case by taking invariants for
	the Springer action:
	\begin{prop}\label{prop:invariants springer}
		The composition
		\begin{equation}\label{eq:composition springer}
			H_*^{\tilde{T}}(Y_k)^{W_J}\rightarrow H_*^{\tilde{T}}(Y_k)\rightarrow H_*^{\tilde{T}}(Y_k^J)
		\end{equation}
		is an isomorphism, where the leftmost term is the
		invariants with respect to the 
		right (i.e. Springer, star) action of $W_J$.
	\end{prop}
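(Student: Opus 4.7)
The strategy is to reduce to the analogous isomorphism \eqref{eq:affhat} for the affine flag variety. The projection $\pi : Y \to Y^J$ restricts to a proper $\tilde T$-equivariant map $\pi_k : Y_k \to Y_k^J$, since if $g^{-1}\gamma_k g \in \Lie(I)$ then it lies in $\Lie(I_J) \supset \Lie(I)$. Naturality of pushforward, together with the defining property of the Springer action (pushforward equals projection to $W_J$-invariants), produces the commutative square
\[
\begin{tikzcd}
H_*^{\tilde T}(Y_k)^{W_J} \arrow[r] \arrow[d, hook] & H_*^{\tilde T}(Y_k^J) \arrow[d, hook] \\
H_*^{\tilde T}(Y)^{W_J} \arrow[r, "\sim"] & H_*^{\tilde T}(Y^J),
\end{tikzcd}
\]
whose bottom row is \eqref{eq:affhat}. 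The vertical injections come from the inclusion of $H_*^{\tilde T}(Y_k)$ into $H_*^{\tilde T}(Y)$ noted at the end of Section~\ref{sec:unramgkm} and its evident parabolic analog, both of which embed the smaller space into the same fixed-point localization. From the square, the top arrow is injective by a diagram chase.

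For surjectivity I would use the affine paving. The classes $A^J_{k, vW_J} = [\overline{\Omega^J_{vW_J} \cap Y_k^J}]$, indexed by the minimal representative $v$ of each coset $vW_J \in W/W_J$, form a free $\tilde S$-basis of $H_*^{\tilde T}(Y_k^J)$. I claim that under the inverse of the bottom isomorphism, $A^J_{k, vW_J}$ lifts to the $W_J$-symmetrization of $A_{k,v} \in H_*^{\tilde T}(Y_k)$, in exact analogy with the formula recalled just after \eqref{eq:affhat}. Such a lift manifestly lies in $H_*^{\tilde T}(Y_k)^{W_J}$, giving surjectivity. To prove the claim I would invoke the GKM uniqueness principle: a class in $H_*^{\tilde T}(Y)$ is determined by its support and its leading terms at the fixed points on that support, and both candidates are supported on the preimage in $W$ of the Bruhat interval below $vW_J$ and have the same leading term at $p_v$, namely $\lt_k(v)$ divided by the order of the $W_J$-stabilizer of $v$.

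The main obstacle is this support/leading-term comparison. The support statement follows from Bruhat-order compatibility of $\pi_k$ and the behavior of minimal coset representatives. The leading-term identity ultimately reduces to the assertion that, for $v$ minimal in $vW_J$, the restriction of $\pi_k$ to the cell $\Omega_v \cap Y_k$ maps isomorphically onto $\Omega^J_{vW_J} \cap Y_k^J$ (just as for the ambient flag variety), so that $(\pi_k)_* A_{k,v} = A^J_{k, vW_J}$ up to the expected combinatorial factor matching the symmetrization normalization. All the geometric content is essentially present in the flag variety case \eqref{eq:affhat}; the only new input needed is that intersecting with the unramified Springer fiber $Y_k$ does not disrupt the Schubert-cell-level description of $\pi$, which should follow from the regularity of $\gamma_k$ (distinct $a_i$) together with the already-established paving of $Y_k^J$ by $\Omega^J_{vW_J} \cap Y_k^J$ from \cite{goresky2003purity}.
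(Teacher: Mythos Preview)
Your overall architecture is sound and close to the paper's own argument: injectivity via the embedding into the fixed-point localization (the paper phrases this as ``becomes an isomorphism after tensoring with $\tilde F$'', which is the same thing), and surjectivity via the cell-level isomorphism $\Omega_v\cap Y_k\cong \Omega^J_{vW_J}\cap Y_k^J$ for minimal $v$, giving $(\pi_k)_*A_{k,v}=A^J_{k,vW_J}$. That last identity is exactly what the paper proves and is the real content.

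There is, however, a genuine gap in the middle of your surjectivity argument. The ``GKM uniqueness principle'' you invoke does not say what you need: knowing that two classes in $H_*^{\tilde T}(Y^J)$ (or in $H_*^{\tilde T}(Y)^{W_J}$) have the same Bruhat support and the same leading coefficient at the single top fixed point $p_v$ does \emph{not} force them to be equal; they may differ by an $\tilde S$-combination of $A^J_{uW_J}$ for $uW_J<vW_J$. Palais--Smale uniqueness pins down the canonical classes $A^J_{uW_J}$ themselves, not arbitrary classes with prescribed top term. So the step ``same support $+$ same leading term at $p_v$ $\Rightarrow$ equal'' fails.

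Fortunately you do not need that step. You already recorded that pushforward equals projection to $W_J$-invariants on $H_*^{\tilde T}(Y)$; by naturality this gives $\pi_J(x\cdot w)=\pi_J(x)$ for $w\in W_J$ on $H_*^{\tilde T}(Y_k)$ as well. Hence once you have $(\pi_k)_*A_{k,v}=A^J_{k,vW_J}$ from the cell argument, the $W_J$-symmetrization of $A_{k,v}$ maps to the same class automatically, and surjectivity follows. Alternatively (and this is the paper's second argument), the support and leading-term computation you did \emph{does} show that the transition matrix from $\{(\pi_k)_*A_{k,v}\}$ to $\{A^J_{k,vW_J}\}$ is unitriangular over $\tilde S$, which already gives surjectivity of $\pi_J$ without ever claiming termwise equality. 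Either route bypasses the flawed uniqueness appeal.
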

	\begin{rem}
		Results of this sort in the non-affine case
		can be found, for instance in \cite{borho1983partial}. 
		We give a direct proof using the explicit
		description in fixed points.
	\end{rem}
	
	\begin{proof}
		We have $H_*^{\tilde{T}}(Y_k)=H_*^{\tilde{T}}(Y_k)^{W_J}\oplus K$ where $K$ is the kernel of the multiplication on the right by $\sum_{w\in W_J} w$.
		
		Consider the map $\pi_J: H_*^{\tilde{T}}(Y_k)\rightarrow H_*^{\tilde{T}}(Y_k^J)$. On fixed points it is determined by the fact that it sends the fixed point corresponding to $w\in W_+$ for $w\in W$ to the fixed point corresponding to the class of $w$ in $W/W_J$. Thus we see that $\pi_J|_K=0$. Moreover, we see that the composition \eqref{eq:composition springer} becomes an isomorphism when tensored with $\tilde F$, so it is injective.
		
		It remains to show that $\pi_J$ is surjective. One way to see this is to observe that the projection induces an isomorphism between the cell corresponding to $w\in W/W_J$ and the cell corresponding to the minimal representative $w_-\in W$. This implies $\pi_J(A_{k,w_-}) = A_{k,w}^J$. Alternatively, we can observe that $\pi_J(A_{k,w_-})$ has the leading term of expected degree, and therefore $\pi_J(A_{k,w_-})$ for different $w$ generate the same submodule of $H_*^{\tilde T}(Y_k^J)\otimes_{\tilde S} \tilde F$ as $A_{k,w}^J$.
		
	\end{proof}
	
	\begin{example}
		For instance, we have an element
		\[A_{1,(1,6,2)}={\frac {p_{(3,4,2)}}{ \left( x_1-x_{{2}}+\eps \right)  \left( x_{{1}}
				-x_{{3}} \right) }}-{\frac {p_{(3,2,4)}}{ \left( x_1-x_{{2}}+\eps
				\right)  \left( x_{{1}}-x_{{3}} \right) }}+\]
		\begin{equation}
			\label{eq:ak162}{\frac {p_{(1,2,6)}}{
					\left(x_3 -x_{{2}}+\eps\right)  \left( x_{{1}}-x_{{3}} \right) }}-{
				\frac {p_{(1,6,2)}}{ \left(x_3-x_{{2}}+\eps\right)  \left( x_{{1}}-
					x_{{3}} \right) }}.
		\end{equation}
		Since we have an injective map $H_*^{\tilde{T}}(Y_k)
		\rightarrow H_*^{\tilde{T}}(Y)$, we must have that $A_{1,(1,6,2)}$
		expands positively in terms of the $A_{w}$ generators, and indeed
		using $\aaf$ we may check that $A_{1,(1,6,2)}=(x_2-x_3)A_{(1,6,2)}-A_{(1,5,3)}$.
	\end{example}
	
	\begin{rem}
		The equivariant cohomology may be defined as in Section
		\ref{sec:affgkm}, though now the inverse limit of graded algebras does not produce something 
		finitely generated, since there
		are infinitely many classes in each degree.
		This can be resolved using the alternative version,
		by considering the subalgebra of classes $\covar_w$
		which vanish on all but finitely many generators
		$A_{k,v}$. However, unlike in the case of the flag variety,
		this definition depends on the choice of generators
		$A_{k,v}$, whereas there are other potential candidates for the generators. For instance, using the intersection of closures
		$Y_{k} \cap V_w$ in place of the closures of the intersections
		would result in a different limit. 
		Defining the cohomology in the right way
		is interesting for the problem
		of generalizing Schubert positivity, but this is
		a red herring for this paper, 
		and so will not be defined. We will
		only need the equivariant cohomology in the usual sense for each 
		$Y_k\cap V_P$ individually.
	\end{rem}

	\subsection{The regular semisimple Hessenberg variety}
	
	\label{sec:gkmhess}
	
	We recall the GKM construction for the 
	regular semisimple Hessenberg variety.
	We will refer to \cite{demari1992hessenberg,abe2017cohomology,tymoczko2008permutation} for details.
	
	The regular semisimple Hessenberg variety
	is determined by a \emph{Hessenberg function}, 
	which in type A is equivalent to the data of a 
	Dyck path $\pi$, which in turn defines
	the \emph{unit interval order}
	$i<_\pi j$ if $i<j$ and $(i,j)\notin \uio(\pi)$.
	Let $M_{\pi}$ be the set of $n\times n$
	matrices $(a_{i,j})$ for which $a_{i,j}=0$
	whenever $i>_\pi j$.
	Let $P_\alpha \subset GL_n(\mathbb{C})$ be a parabolic
	subgroup for which the conjugation action
	preserves $M_\pi$, which for any $\pi$ 
	includes the upper triangular
	matrices $B$. Let
	$\gamma=\diag(a_1,...,a_n)$ be the diagonal
	matrix with all distinct complex entries.
	The regular semisimple Hessenberg variety
	is the subvariety of the usual
	complex partial flag variety given by
	\[\rsh^\alpha_{\pi}=\left\{gP_\alpha: g^{-1}\gamma g
	\in M_\pi\right\}\subset \Fl_\alpha.\]
	
	The composition $\alpha=\alpha(\pi)$ from Section
	\ref{sec:prev} determines a composition for which
	$P_\alpha\subset GL_n(\C)$ fixes $M_{\pi}$,
	and so we have a corresponding
	variety $\rsh_{\pi}^{\alpha'}$ whenever $P_{\alpha'}\subset P_\alpha$, in other words $\alpha'$ refines $\alpha$.
	In the example from Figure \ref{fig:dyckpathalpha},
	we would have
	\[M_{\pi}=\left\{
	\left(\begin{array}{cccccc}
		*&*&*&*&*&*\\
		*&*&*&*&*&*\\
		*&*&*&*&*&*\\
		0&0&*&*&*&*\\
		0&0&*&*&*&*\\
		0&0&0&*&*&*\end{array}\right)\right\},\quad
	P_{\alpha}=\left\{
	\left(\begin{array}{cccccc}
		*&*&*&*&*&*\\
		*&*&*&*&*&*\\
		0&0&*&*&*&*\\
		0&0&0&*&*&*\\
		0&0&0&*&*&*\\
		0&0&0&0&0&*
	\end{array}\right)\right\},\]
	and that $M_\pi$ is preserved
	by conjugation by $P_\alpha$, where $\alpha=(2,1,2,1)$.
	
	There is an action of the maximal $n$-dimensional torus $T\subset GL_n(\C)$, and $\rsh^{\alpha}_\pi$
	is GKM with respect to this action,
	with fixed points given by the entire symmetric
	group $V=S_n$. The edge 
	set is determined by
	\begin{equation}
		\label{eq:edgesrsh}
		\outneighbors_{\pi}(wS_\alpha)=\left\{vS_\alpha \bruleq w S_\alpha:
		v_-=w_- t_{i,j},(i,j) \in \uio(\pi)\right\}
	\end{equation}
	where $v_-,w_-$ are minimal coset representatives
	as above.
	We will denote the basis of homology by
	$A_{\pi,\sigma}\in H_T^*(\rsh_\pi)$.
	The generators of $H_T^*(\rsh^\alpha_\pi)$
	may again be determined as in \eqref{eq:affhat}.

	\begin{example}
		For instance, we would have
		\begin{equation}
			\label{eq:api312}
			A_{110100,(3,1,2)}=
			\frac{1}{x_1-x_3}p_{(3,1,2)}-
			\frac{1}{x_1-x_3}p_{(1,3,2)}=
			(x_2-x_3)A_{(3,1,2)}-A_{(2,1,3)},
		\end{equation}
		where $A_\sigma$ are the generators
		of the usual flag variety $H^*_T(\Fl_n)$.
	\end{example}

	We have the dot action of $S_n$ on the left
	on $H_*^T(\rsh_\pi^\alpha)$ and $H^*_T(\rsh_\pi)$ which is compatible
	with \eqref{eq:rshpd}.
	It was conjectured by Shareshian
	and Wachs, and then proved in two separate
	papers \cite{brosnan2018hessenberg,shareshian2016chromatic} that
	\begin{equation}\frob_Y
		\label{eq:shareshianwachs}
		H_T^*(\rsh_\pi)=(1-q)^{-n}\csfa_\pi[Y;q]=
		\omega\csf_{\dyckpath}[Y(1-q)^{-1};q]
	\end{equation}
	where $\csfa_{\pi}[Y;q]$ was defined
	in \eqref{eq:csfa}, and 
	\begin{equation}
		\label{eq:csf}
		\csf_{\dyckpath}[Y;q]=
		\sum_{\bn:(i,j)\in \uio(\dyckpath)\Rightarrow b_i\neq b_j} 
		q^{\inv_\dyckpath(\bn)}Y_{\bn}
	\end{equation}
	is Stanley's chromatic symmetric function.
	
	The equivariant/non-equivariant homology/cohomology are given as follows:
	\[
	H^*(\rsh_\pi) = (1-q)^{-n}\csfa_\pi[Y(1-q);q] = \omega\csf_{\dyckpath}[Y;q],\qquad H_*(\rsh_\pi) = \omega\csf_{\dyckpath}[Y;q^{-1}]
	\]
	\[
	H_*^T(\rsh_\pi) = \omega\csf_{\dyckpath}[Y(1-q)^{-1};q^{-1}] = (1-q)^{-n} \omega \csfa_\pi[Y;q^{-1}].
	\]
	Since $\rsh_\pi$ is smooth (though perhaps not connected)
	\cite{demari1992hessenberg}, we have an isomorphism
	of equivariant Borel-Moore homology with cohomology.
	For $\alpha=(1,...,1)$, it is given in the fixed point basis by
	\begin{equation}
		\label{eq:rshpd}
		H_*^T(\rsh_\pi)\rightarrow H_T^{*+\#\uio(\pi)},\quad
		p_\sigma\mapsto \left(\prod_{(i,j)\in \uio(\pi)} (x_{\sigma_i}-x_{\sigma_j})\right)p_\sigma.
	\end{equation}
	The multiplication map may be written as $f\mapsto f\Delta_\pi(\zn)$ where
	\[\Delta_\pi(\zn)=\prod_{(i,j) \in \uio(\pi)} (z_i-z_j),\]
	and $z_i$ is multiplication by the Chern class
	$p_\sigma z_i=x_{\sigma_i} p_\sigma$, which is natural
	to think of as right multiplication.
	Then Poincar\'e duality (on each connected component)
	is manifested via the identities
	\[
	\omega\csfa_\pi[Y;q^{-1}] = q^{-D(\pi)} \csfa_\pi[Y;q],\qquad \csf_{\dyckpath}[Y;q^{-1}] = q^{-D(\pi)} \csf_{\dyckpath}[Y;q].
	\]
	
	If $\alpha$ is some composition so that
	conjugation by $P_\alpha$ preserves $M_\pi$,
	then the fibers of
	the map $\rsh_\pi\rightarrow \rsh_\pi^\alpha$
	are isomorphic to the product of usual flag varieties
	$\Fl_{\alpha_1}\times \cdots \times \Fl_{\alpha_l}$,
	as the Hessenberg condition is trivial on the fibers.
	We therefore have that
	\[  \frob_Y H^*_T(\rsh_\pi^\alpha)=\csfa_{\pi,\alpha}[Y;q] 
	:= \frac{1}{(1-q)^n\aut_q(\alpha)} \csfa_\pi[Y;q],\]
	\begin{equation}
		\label{eq:authess}
		\frob_Y H^T_*(\rsh_\pi^\alpha)=
		(-q)^{-n}\omega_Y \csfa_{\pi,\alpha}[Y;q^{-1}]=
		\frac{q^{n'(\alpha)-\uio(\pi)}}{(1-q)^n \aut_q(\alpha)} \csfa_{\pi}[Y;q],
	\end{equation}
	where $n'(\alpha)$ was defined in \eqref{eq:macn}.
	
	\subsection{The Hessenberg paving of the affine Springer fiber}
	\label{sec:hesspav}
	We now describe an explicit
	presentation of the paving by affine bundles
	over the Hessenberg
	varieties defined in \cite{goresky2003purity}.
	We refer to that paper for all details 
	in this section.
	
	Let $P \subset \awg/S_\alpha$ 
	be a finite lower set,
	and suppose that $P=Q\cup S_n w S_\alpha$ for another
	lower set $Q$, where the union is disjiont,
	so that the double coset is maximal within
	$P$. Then we have the complement
	\[E^{\alpha}_{S_n w S_\alpha}=
	\Omega^\alpha_{S_n w S_{\alpha}} :=
	\bigsqcup_{v\in S_n w S_\alpha} \Omega^\alpha_{vS_\alpha}=V^\alpha_P-V^\alpha_Q\] 
	depends only on $S_n w S_\alpha$. 
	Moreover, it is isomorphic
	to an affine bundle of rank
	$\inv(w_-)$ over the parabolic (non affine) flag
	variety $\Fl_{\alpha'}=GL_n/P_{\alpha'}$, 
	where as above $w_+,w_-\in S_n w S_\alpha$
	are the maximal and minimal coset representatives.
	The composition $\alpha'$ is the 
	unique composition so that the parabolic subgroup
	$P_{\alpha'}$ is the stabilizer of $GL_n$ acting
	on $w_- P_\alpha$, which is the same as the composition
	$\alpha'=\alpha(S_n w S_\alpha)$ from Section \ref{sec:affperm}.
	Then we have that 
	$E^\alpha_{S_n w S_\alpha}\subset V^\alpha_{w_+}=
	G(\mathcal{O}) w I_{\alpha}$, and the map 
	$E^{\alpha}_{S_n w S_\alpha}\rightarrow 
	\Fl_{\alpha'}$ can be expressed by
	\[g(t) w I_\alpha\mapsto g(0)P_{\alpha'},
	\quad g(t)\in \mathcal{O}=I_{(n)}.\]
	
	By the results of \cite{goresky2003purity}, we have
	that $\aff_{k}^\alpha \cap E^{\alpha}_{S_n w S_\alpha}$
	is an affine sub-bundle over the Hessenberg variety
	$\rsh^{\alpha'}_{\pi} \subset \Fl_{\alpha'}$, 
	which form what is called a Hessenberg paving.
	The Dyck path is determined by 
	$\pi=\pi_k(S_nw S_\alpha)$, which is the path
	from Section \ref{sec:affperm}.
	It may be checked that 
	$\alpha(\pi_k(S_n wS_\alpha))$ refines
	$\alpha'=\alpha(S_n w S_\alpha)$, so 
	that the Hessenberg variety is well-defined.
	Geometrically, $\pi_k(S_n w S_\alpha)$ may be expressed as
	the unique path so that for $i<j$, we have
	\[(i,j)\in \uio(\pi)\Leftrightarrow 
	t_{a,b} w_{\wmin} \in \outneighbors_{k}(w_{-}),\]
	by comparing with \eqref{eq:wtodp}.
	
	\begin{example}
		\label{ex:hesspav}
		Consider the case of $n=2, k=1$
		shown in Figure \ref{fig:hesspav}.
		We see that 
		$\pi_1(S_2 (1,2) S_{1,1})=1100$
		so that $\rsh_{\pi}=\Fl_2$, and that the bundle
		has rank zero, giving a copy of $\mathbb{CP}^1$
		connecting $(1,2)$ to $(2,1)$. On the other hand,
		all other Dyck paths such as $\pi_1(S_2 (0,3) S_{1,1})$
		are equal to $1010$, for which the Hessenberg variety
		consists of two points, in this case 
		$\{(0,3),(-1,4)\}$.
		The bundle $E_{\{(0,3),(-1,4)\}}\cap Y_1$ has rank one,
		shown by incomplete line segments.
	\end{example}

	\begin{figure}
		\begin{centering}
			\begin{tikzpicture}
				\node[text width=.5cm] at (-1.5,.5) {...};
				\draw[-,thick,color=black] (-1,0)--(0-.2,1-.2);
				\filldraw[black] (0,1) circle (2pt) 
				node[anchor=south]{$(-2,5)$};
				\draw[-,thick,color=black] (0,1)--(1-.2,0+.2);
				\filldraw[black] (1,0) circle (2pt) 
				node[anchor=north]{$(3,0)$};
				\draw[-,thick,color=black] (1,0)--(2-.2,1-.2);
				\filldraw[black] (2,1) circle (2pt) 
				node[anchor=south]{$(0,3)$};
				\draw[-,thick,color=black] (2,1)--(3-.2,0+.2);
				\filldraw[black] (3,0) circle (2pt) 
				node[anchor=north]{$(1,2)$};
				\draw[-,thick,color=black] (3,0)--(4,1);
				\filldraw[black] (4,1) circle (2pt) 
				node[anchor=south]{$(2,1)$};
				\draw[-,thick,color=black] (4+.2,1-.2)--(5,0);
				\filldraw[black] (5,0) circle (2pt) 
				node[anchor=north]{$(-1,4)$};
				\draw[-,thick,color=black] (5+.2,0+.2)--(6,1);
				\filldraw[black] (6,1) circle (2pt) 
				node[anchor=south]{$(4,-1)$};
				\draw[-,thick,color=black] (6+.2,1-.2)--(7,0);
				\filldraw[black] (7,0) circle (2pt) 
				node[anchor=north]{$(-3,6)$};
				\draw[-,thick,color=black] (7+.2,0+.2)--(8,1);
				\node[text width=.5cm] at (8.5,.5) {...};
			\end{tikzpicture}
			\caption{An illustration of the Hessenberg paving
				of $Y_1$ for $n=2$. The fibers of the bundle
				$E_{S_2 w S_{1,1}}$ are shown using incomplete
				segments.}
			\label{fig:hesspav}
		\end{centering}
	\end{figure}
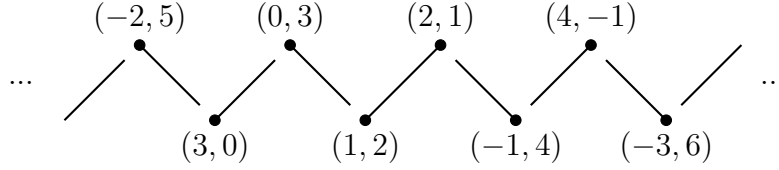

	We may describe the equivariant
	Euler class of the fiber of $E^\alpha_{S_n w S_\alpha}$
	over a fixed point:
	\begin{equation}
		\label{eq:hesspaveeuler}
		e\left(E^{\alpha}_{S_n wS_{\alpha}} \big|_{\sigma w_-S_{\alpha}}\right)=
		\frac{ \lt_{k}(\sigma w_- S_\alpha)}{\lt_\pi(\sigma S_{\alpha'})}
		\in \tilde{S}.
	\end{equation}
	Note that we obtain a different formula for the
	rank of $E^\alpha_{S_n w S_\alpha} \cap Y_k$ over each point, in particular
	\begin{equation}
		\label{eq:rankealpha}
		\rank(E^\alpha_{S_n w S_\alpha} \cap Y_k)=
		\inv_k(w_- S_\alpha)=
		\inv_k(w_+ S_\alpha)-\inv_\pi(\sigma_0 S_{\alpha'})
	\end{equation}
	where $w_{\pm}$ are the maximal and minimal
	elements of $S_n w S_\alpha$ respectively, and
	$\sigma_0=(n,...,1)$ is the maximal element of $S_n$.

	We now make the following
	observation: if $U=X-Z$ and all three spaces
	are paved by affines, then we have that
	the long exact sequence
	\begin{equation} 
		\label{eq:longshort}
		\cdots \rightarrow H_*^T(Z)\rightarrow H_*^T(X)\rightarrow H_*^T(U)\rightarrow H_{*-1}^T(Z)\rightarrow \cdots
	\end{equation}
	is actually short exact, as the Borel-Moore homology
	is concentrated in even degree, which in particular
	implies additivity of characters.

	Applying this in our situation, 
	the projection map induces an isomorphism
	\[H_*^{\tilde{T}}(E^{\alpha}_{S_n wS_\alpha}\cap 
	\aff^{\alpha}_{k})\cong H_*^{\tilde{T}}(\rsh_{\pi}^{\alpha'})
	\cong \tilde{S} \otimes_S H_*^{T}
	(\rsh_{\pi}^{\alpha'}),\] 
	where the second map exists since
	loop rotation only acts on the fibers of the bundle.
	We then have a short exact sequence
	\begin{equation}
		\label{eq:hessiso}
		0\rightarrow H_*^{\tilde{T}}(V^\alpha_Q \cap Y^{\alpha}_{k})
		\rightarrow H_*^{\tilde{T}}(V^\alpha_P\cap \aff^{\alpha}_{k})
		\rightarrow H_*^{\tilde{T}}(\rsh_{\pi}^{\alpha'})\rightarrow 0 \end{equation}
	since all terms are nonzero in only even degree.

	The following proposition summarizes these statements, 
	and follows by interpreting the results of
	\cite{goresky2003purity} in type A.
	\begin{prop}
		\label{prop:asptohess}   
		Let $E^{\alpha}_{S_n w S_\alpha}=V^\alpha_{P}-V^\alpha_Q$, 
		$P=Q\cup S_n w S_{\alpha}\subset W/S_{\alpha}$, 
		$\alpha'=\alpha(S_n w S_{\alpha})$, 
		$w_{\pm} \in S_n w S_\alpha$
		and $\pi=\pi_{k}(S_n wS_\alpha)$ be as above.
		Then $\yk^{\alpha}\cap E^\alpha_{S_n wS_\alpha}$ is an affine bundle of rank $\inv_k(w_-)$ over the Hessenberg variety
		$\rsh_{\pi}^{\alpha'}$.
		Moreover, the Schubert cell
		$\Omega_{\sigma w_-S_\alpha} \cap \yk^\alpha \cap E^\alpha_{S_n w S_\alpha}$ is
		identified with the restriction
		of this bundle to $\Omega_{\sigma S_{\alpha'}} \cap \rsh_{\pi}$,
		and the surjection in \eqref{eq:hessiso}
		is given in the fixed point basis by
		\begin{equation}
			\label{eq:asptohess}
			\hessmap^\alpha_{S_n w S_\alpha}:
			p_{\sigma w_- S_\alpha} \mapsto
			\frac{\lt_{k}(\sigma w_- S_\alpha)}
			{\lt_\pi(\sigma S_{\alpha'})}
			p_{\sigma S_{\alpha'}}
		\end{equation}
		with all other fixed points $p_{v S_{\alpha}}$ for
		$v S_{\alpha}\notin Q$ mapping to zero, which in
		particular satisfies
		$h^\alpha_{S_n w S_\alpha}(A^{\alpha}_{k,\sigma w_- S_\alpha})=A^{\alpha'}_{\pi,\sigma S_{\alpha'}}$.
	\end{prop}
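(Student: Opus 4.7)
The plan is to translate the Hessenberg paving constructed in \cite{goresky2003purity} into the combinatorial language of Section \ref{sec:affperm}, and then compute the induced map on equivariant Borel-Moore homology using the Euler class formula \eqref{eq:hesspaveeuler}. The existence of an affine bundle structure $Y_k^\alpha \cap E^\alpha_{S_n w S_\alpha} \to \rsh_\pi^{\alpha'}$ over \emph{some} regular semisimple Hessenberg variety is the main content of \cite{goresky2003purity} specialized to type $A$, realized concretely via the coordinates $g(z)wI_\alpha \mapsto g(0)P_{\alpha'}$ on $E^\alpha_{S_n w S_\alpha} \subset V_{w_+}^\alpha = G(\mathcal{O})wI_\alpha$. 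The core task left to me is to identify $\pi$ with the combinatorially defined $\pi_k(S_n w S_\alpha)$, to verify the rank formula, and to compute the projection on fixed points.

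The identification of $\pi$ I would carry out by matching GKM edge sets on the two sides of the projection. On the Springer fiber side, \eqref{eq:edgesasp} describes outgoing edges from $\sigma w_-S_\alpha$ as affine transpositions $t_{a,b}$ of height $|a-b| < kn$. On the Hessenberg side, \eqref{eq:edgesrsh} describes outgoing edges from $\sigma S_{\alpha'}$ as finite transpositions $t_{i,j}$ with $(i,j) \in \uio(\pi)$. Translating the height condition on affine transpositions into the corresponding finite permutation condition and comparing with the definition \eqref{eq:wtodp} pins down $\pi = \pi_k(S_n w S_\alpha)$; that this refinement is compatible with $\alpha'$ was noted just after \eqref{eq:wtodp}. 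The rank $\inv_k(w_-)$ of the affine bundle then follows by dimension counting cell-by-cell: $\Omega_{\sigma w_- S_\alpha} \cap Y_k^\alpha$ has dimension $\inv_k(\sigma w_- S_\alpha)$ and the base cell $\Omega_{\sigma S_{\alpha'}} \cap \rsh_\pi$ has dimension $\inv_\pi(\sigma S_{\alpha'})$, whose difference is $\inv_k(w_-)$ by \eqref{eq:rankealpha} independent of $\sigma$. The identification of the Schubert cell with the bundle restriction over $\Omega_{\sigma S_{\alpha'}} \cap \rsh_\pi$ is then automatic by equivariance of $g(z)wI_\alpha \mapsto g(0)P_{\alpha'}$ with respect to the left $S_n$ action.

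For the fixed-point formula \eqref{eq:asptohess}, I would apply the general principle that the pushforward along an equivariant affine bundle $\pi : E \to B$ between spaces paved by affines acts in the fixed-point localization by $p_b \mapsto e(E|_b)\,p_b$, where $e(E|_b)$ is the equivariant Euler class of the fiber. The fixed points of $E^\alpha_{S_n w S_\alpha}$ are exactly $\{\sigma w_- S_\alpha : \sigma \in S_n\}$, each mapping to $\sigma S_{\alpha'}$, and the Euler class of the corresponding fiber is given explicitly by \eqref{eq:hesspaveeuler} as $\lt_k(\sigma w_- S_\alpha)/\lt_\pi(\sigma S_{\alpha'})$. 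Fixed points $p_{vS_\alpha}$ for $vS_\alpha \in Q$ map to zero because they lie in the kernel of the surjection in \eqref{eq:hessiso}. The identity $\hessmap^\alpha_{S_n w S_\alpha}(A^\alpha_{k,\sigma w_-S_\alpha}) = A^{\alpha'}_{\pi,\sigma S_{\alpha'}}$ then follows from the identification of the Schubert cell with the bundle restriction, since fundamental classes of Schubert cells are pulled back to fundamental classes under the affine bundle projection.

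The main obstacle I anticipate is the combinatorial bookkeeping in the Dyck-path identification step, as one must consistently distinguish minimal from maximal coset representatives, translate the affine transposition height condition $|a-b|<kn$ into the combinatorial attacking relation of Definition \ref{defn:dinv1}, and track the refinement between $\alpha$ and $\alpha'$. Once this matching is established, both the bundle structure and the pushforward formula follow from the Goresky-Kottwitz-MacPherson paving together with standard GKM localization.
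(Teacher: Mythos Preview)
Your proposal is correct and matches the paper's treatment: the paper does not give a standalone proof of this proposition but states explicitly that it ``summarizes these statements, and follows by interpreting the results of \cite{goresky2003purity} in type A,'' referring back to the discussion leading up to \eqref{eq:hesspaveeuler}--\eqref{eq:hessiso}. Your sketch simply spells out that interpretation---matching GKM edge data to pin down $\pi$, reading the rank off \eqref{eq:rankealpha}, and computing the surjection in fixed points via the Euler class \eqref{eq:hesspaveeuler}---which is exactly the content the paper leaves implicit. One small remark: the clause ``$vS_\alpha \notin Q$'' in the statement appears to be a typo for ``$vS_\alpha \in Q$,'' and your reading (points in $Q$ map to zero) is the correct one.
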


	\begin{example}
		If $w=(1,6,2)$, then 
		\[S_3w=\left\{
		(1, 5, 3), (1, 6, 2), (2, 4, 3), (2, 6, 1), (3, 4, 2), (3, 5, 1)\right\}\]
		and $w_{-},w_{+}=(2,4,3),(2,6,1)$.
		We find that $\pi=\pi_1(w_-)=110100$,
		$\uio(\pi)=\{(1,2),(2,3)\}$,
		and also that $\lt_{3}(w_{\wmin})=x_1-x_3+\eps$.
		Inserting this into \eqref{eq:asptohess}, 
		and plugging in the formulas from
		\eqref{eq:ak162} and \eqref{eq:api312},
		we verify that
		\[h_{(2,4,3)}(A_{1,(1,6,2)})=
		A_{110100,(3,1,2)},\]
		noticing that the $\epsilon$ variables cancel.
	\end{example}

	\section{GKM spaces and the nabla operator}
	\label{sec:thmb}
	
	In this section we state and prove Theroem A
	from the introduction,
	which connects the results of \cite{carlsson2020combinatorial}
	with the nil Hecke algebra and the homology of $Y_{k}$.
	
	\subsection{Connection with the nil Hecke algebra}
	
	\label{sec:nablahecke}
	
	Consider the ind-subvariety 
	\begin{equation}
		\label{eq:zpdef}
		Z =\left\{V_0\subset \cdots \subset V_n \in Y: 
		\mbox{$\C[[z]] e_i \subset V_n$ for all $i$}\right\}. 
	\end{equation}
	Then the torus fixed points $Z$ are the subset $W_+\subset W$,
	which is in fact a lower set in the Bruhat order,
	and since it is preserved by $I$, it is a union of Schubert varieties. We have its equivariant homology over the small torus
	\[H_*^T(Z) = H_*^T(Y) \cap F\cdot W_+\]
	where $F=\C(\xn)$ be the field of fractions of $S$,
	and the intersection is taken in $F\cdot W$.
	Then from Proposition \ref{prop:aaf} of Section
	\ref{sec:nilhecke},
	we have the explicit description
	\[H_*^T(Z) = \bigoplus_{w \in W_+} S\cdot A_w
	\subset \bigoplus_{d\geq 0} \aaf \rho^d\]
	where $\aaf$ is the nil Hecke algebra over $S$.
	
	Let $M_k=H_*^T(Z_k)$ as an $S$-submodule of 
	$H_*^T(Z)$. We have that $M_k$ is bigraded by the degree
	in the $x$-variables, and also by the positive
	grading $|w|$ for $w\in W_+$. It is also preserved by the 
	left and right actions of $S_n$, which preserve
	the condition in \eqref{eq:zpdef}.
	We may therefore consider the Frobenius character
	\[\frob_{Y,X} M_k \in \Sym_{q,t}(X,Y).\]
	The left and right actions correspond to 
	to the variables $Y,X$ respectively.
	In other words the symmetric function in 
	the $Y$-variables keeps track of the dot action,
	while $X$ corresponds to the choice of parahoric
	subgroup which must contain the zero label $0\in J$,
	in other words come from a composition.
	We now show that $F_{Y,X} M_k$ is determined
	by the formula from Theorem \ref{thm:oldthm}.

	\begin{thm}
		\label{thm:frob}
		Let $\Delta(\xn)=\prod_{i<j} (x_i-x_j)$ be
		the alternating element.
		\begin{enumerate}[(a)]
			\item \label{thm:intitem} We have
			\[
			H_*^T(Y_k)=H_*^T(Y) \cap \Delta(\xn)^{-k} S\cdot W
			\]
			where the intersection is taken
			as $S$-submodules of $F\cdot W$.
			\item \label{thm:resitem} The restriction of the surjection
			$H_*^T(Y_k)\rightarrow H_*^T(Y^{J}_k)$ to the invariants
			$H_*^T(Y_k)^{W_J}$ is an isomorphism.
			\item \label{thm:frobitem} The Frobenius character
			is given by
			\[\frob_{Y,X} H_*^T(Z_k)=q^{-k\binom{n}{2}}\omega_X 
			\nabla_X^k e_n\left[\frac{XY}{(1-q)(1-t)}\right].\]
		\end{enumerate}
	\end{thm}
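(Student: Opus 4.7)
The plan is to address the three parts in order, building on the GKM machinery developed earlier in the paper.

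For part (\ref{thm:intitem}), I would work inside the fixed-point localization $F\cdot W$ via the embeddings \eqref{eq:embeddings}. The generators $A_{k,v}$ of $H_*^T(Y_k)$ have leading terms $\lt_k(v)=\prod_{e\in\outedges_k(v)}\wf(e)$, which on restriction from $\tilde{T}$ to $T$ become products of factors $(x_i-x_j)$. The forward containment $H_*^T(Y_k)\subseteq H_*^T(Y)\cap\Delta(\xn)^{-k}S\cdot W$ reduces to verifying that $\lt_k(v)$ divides $\Delta(\xn)^k$. This follows from a counting argument: the height restriction $|a-b|<kn$ in \eqref{eq:edgesasp} bounds by $k$ the number of outgoing transpositions at $v$ with any prescribed residue pair $\{i,j\}\subset\{1,\dots,n\}$, so $(x_i-x_j)^k$ suffices as a denominator. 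For the reverse containment, I would take $f\in H_*^T(Y)\cap\Delta(\xn)^{-k}S\cdot W$, expand $f=\sum_v d_v A_{k,v}$ in $F\cdot W$ using the triangular change of basis between $\{A_w\}$ and $\{A_{k,v}\}$, and prove by Bruhat induction that each $d_v\in S$: the leading fixed-point component of $A_{k,v}$ is $1/\lt_k(v)$, and the hypothesis on $f$ together with the bound $\lt_k(v)\mid\Delta(\xn)^k$ forces polynomiality.

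Part (\ref{thm:resitem}) is a direct translation of Proposition \ref{prop:invariants springer} from the big torus $\tilde{T}$ to the small torus $T$. The three ingredients used there---the decomposition $H_*^{\tilde{T}}(Y_k)=H_*^{\tilde{T}}(Y_k)^{W_J}\oplus K$ with $K$ the kernel of the Springer-action symmetrizer, the vanishing $\pi_J|_K=0$ on fixed points, and the injectivity check after passing to the fraction field---are all statements about the combinatorics of fixed points and the Springer action, and never invoke the loop-rotation parameter $\eps$. Hence the argument descends verbatim to $T$.

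For part (\ref{thm:frobitem}), the substantive part of the theorem, I would apply the Hessenberg paving of Proposition \ref{prop:asptohess} to $Z_k$ in the full-flag case $\alpha=(1,\dots,1)$, stratifying by orbits $S_n w\subset W_+$. Each piece $Z_k\cap E_{S_n w}$ is an affine bundle over a Hessenberg variety $\rsh_\pi^{\alpha'}$ with $\pi=\pi_k(S_n w)$ and $\alpha'=\alpha(S_n w)$. The long exact sequences obtained by peeling off one piece at a time are in fact short exact, as all involved spaces are affine-paved so their Borel--Moore homology is concentrated in even degree (see \eqref{eq:longshort}), yielding additivity of Frobenius characters across the paving. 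Indexing the cells via the bijection $\paff$ of Section \ref{sec:affperm} between sorted pairs $[\mn,\an]$ and cosets in $S_n\backslash W_+$, with the $X_\an$ monomial encoding the Springer-action Frobenius via the parahoric invariants identification from part (\ref{thm:resitem}), and substituting the formula \eqref{eq:authess} for $\frob_Y H^T_*(\rsh_\pi^{\alpha'})$, produces a sum that matches the right-hand side of equation \eqref{eq:oldnabla} of Theorem \ref{thm:oldthm1} after a change of variables. The identity \eqref{eq:dinvw} converts the affine-bundle ranks and the exponents from \eqref{eq:authess} into the statistic $\dinv_k(\mn,\an)$, while the factors $q^{-k\binom{n}{2}}$ and $\omega_X$ in the theorem statement arise from Poincar\'e duality applied to the Hessenberg fibers, encoded in the $X$-alphabet.

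The principal obstacle will be the bookkeeping in part (\ref{thm:frobitem}): checking, on the nose, that the $q$-exponents, $t$-exponents, automorphism prefactors $\aut_q(\mn,\an)$, and the involution $\omega_X$ all fall out correctly when the Hessenberg paving data is combined with Poincar\'e duality on the fibers and the parahoric-invariance of part (\ref{thm:resitem}). Part (\ref{thm:intitem}) is conceptually transparent but its reverse inclusion demands a careful triangular induction; part (\ref{thm:resitem}) is essentially routine once Proposition \ref{prop:invariants springer} is in hand.
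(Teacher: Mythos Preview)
Your plan for part~(\ref{thm:resitem}) and for the reverse inclusion in part~(\ref{thm:intitem}) is essentially the paper's approach. There are, however, two genuine gaps.

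\textbf{Part~(\ref{thm:intitem}), forward inclusion.} You claim that $H_*^T(Y_k)\subset \Delta(\xn)^{-k}S\cdot W$ ``reduces to verifying that $\lt_k(v)$ divides $\Delta(\xn)^k$.'' This is not enough: the leading term $\lt_k(v)^{-1}$ only controls the coefficient of $p_v$ in $A_{k,v}$, not the lower coefficients $c_{v,u}$ for $u<v$. After restricting to the small torus the space is no longer GKM, and there is no a~priori reason those lower coefficients have denominators bounded by $\Delta(\xn)^k$. The paper handles this with a separate lemma (Lemma~\ref{lemma:maxdenom}): for each fixed residue pair $(i,j)$ it constructs, via a reduction to the rank-two affine Weyl subgroup $W_{i,j}\subset W$, an explicit cohomology class $\chi_v$ supported at a single fixed point whose value vanishes only to order $k$ along $x_i=x_j$. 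Pairing against such classes forces $(x_i-x_j)^k c_v\in S$ for every coefficient. Your proposal skips this step.

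\textbf{Part~(\ref{thm:frobitem}), origin of $\omega_X$.} You attribute $\omega_X$ to ``Poincar\'e duality applied to the Hessenberg fibers, encoded in the $X$-alphabet.'' But Poincar\'e duality on $\rsh_\pi$ is a statement about the dot action and hence the $Y$-alphabet; it produces the $q$-shift in \eqref{eq:authess}, not an $\omega$ in $X$. In the paper's computation, what the geometry naturally outputs is the exponent $-\inv_k(w_+S_\alpha)$, which via the dictionary at the end of Section~\ref{sec:affperm} becomes $\dinv_k(\mn,-\an)-k\binom{n}{2}$ with a \emph{negative} label $-\an$. The resulting sum therefore does not match Theorem~\ref{thm:oldthm1} directly; one must invoke Corollary~\ref{cor:supthm}, the super-alphabet extension, in which negative $X$-labels correspond exactly to applying $\omega_X$. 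Without this step your sum would fail to agree with the nabla formula, and your stated mechanism for producing $\omega_X$ would not close the gap.
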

	
	\begin{example}
		We have
		\[[X^{(1,1)} Y^{(1,1)} ]
		q^{-1} \omega_X \nabla_Xe_2\left[\frac{XY}{(1-q)(1-t)}\right] = \frac{1+q^{-1}}{(1-q)^2}+
		t\frac{1+3q^{-1}}{(1-q)^2}+
		t^2\frac{1+5q^{-1}}{(1-q)^2}+\cdots\]
		Each coefficient of $t^d$ 
		consists of a shifted copy of part the space
		shown with paving in Example \ref{ex:hesspav},
		from taking only those fixed points in $W_+$
		which have total degree $|w|=d$.
	\end{example}
	
	We need a lemma.
	
	\begin{lem}
		\label{lemma:maxdenom}
		Let $T=(z_1,...,z_n)$ be the small torus, with
		no loop rotation. Then for any element
		\begin{equation}
			\label{eq:denomlem}
			\sum_{w}c_w(\xn) p_w\in H_*^T(\yk)\subset
			F\cdot W,
		\end{equation}
		we have that $\Delta(\xn)^k c_w(\xn)\in S$.
	\end{lem}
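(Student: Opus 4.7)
The plan is to reduce the statement to the case of basis elements and then bound the denominators of their fixed-point expansions, using the $\tilde T$-GKM structure of $Y_k$ together with a root-theoretic count of Bruhat descents in rank-one affine reflection subgroups of $W$.

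By the affine Schubert paving of $Y_k$ from \cite{goresky2003purity}, the module $H_*^T(Y_k)$ is $S$-freely generated by the fundamental classes $A_{k,w} = [V_w \cap Y_k]$, and since $S$-coefficients do not worsen denominators, it suffices to prove the bound for each $A_{k,w}$. First I would pass to the big torus $\tilde T$, where $Y_k$ is genuinely GKM, and consider the fixed-point expansion $A_{k,w} = \sum_{v \leq w} c_{w,v}(\xn,\eps)\, p_v$ with $c_{w,v} \in \tilde F$. Each edge weight $\lambda_a - \lambda_b$ specializes at $\eps = 0$ to $x_{\bar a} - x_{\bar b}$ with $\bar a \neq \bar b$, so $c_{w,v}(\xn,0)$ is well defined and is precisely the small-torus coefficient to be bounded.

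By the standard triangularity argument for GKM spaces with affine paving, the denominator of $c_{w,v}$ in $\tilde F$ divides the leading term $\lt_k(v)$, i.e.\ the product of weights over outgoing edges at $v$ in the GKM graph of $Y_k$: the top coefficient $c_{w,w} = 1/\lt_k(w)$ is forced by the paving, and the lower coefficients for $v<w$ are determined recursively by the GKM congruences across edges at $v$. After setting $\eps=0$, the required bound therefore reduces to showing that $\lt_k(v)|_{\eps=0}$ divides $\Delta(\xn)^k$, i.e.\ that each factor $x_i - x_j$ with $i<j$ appears with multiplicity at most $k$ in $\lt_k(v)|_{\eps=0}$.

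The heart of the proof is this combinatorial count. Outgoing edges at $v$ whose weight specializes to $\pm(x_i - x_j)$ correspond to reflections $t_{a,b} \in W$ with $\{\bar a, \bar b\}=\{i,j\}$, height $|a-b|<kn$, and $t_{a,b}\, v \bruleq v$. These generate a rank-one affine reflection subgroup of $W$ of type $\widehat{A}_1$, and the $2k$ height-restricted reflections split naturally into $k$ pairs: for $\ell=0,\ldots,k-1$, the $\ell$-th pair consists of the reflections with roots $\alpha+\ell\delta$ and $-\alpha+(\ell+1)\delta$ of the subsystem. Writing $v^{-1}(\alpha)=\epsilon\alpha+m\delta$ with $\epsilon\in\{\pm 1\}$ and $m\in\mathbb Z$, a short root-theoretic calculation shows that within each pair the two descent conditions $v^{-1}(\text{root}) < 0$ are mutually exclusive for $\ell\geq 0$ (for $\epsilon=1$ they become $\ell\leq -m-1$ versus $\ell\leq m-1$, and analogously in the other case), so at most one reflection from each pair can be a descent of $v$. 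Summing over the $k$ pairs gives the required bound. The main technical obstacle I anticipate is justifying the denominator bound for singular $Y_k$: one must establish that denominators of non-leading coefficients divide only the outgoing leading term $\lt_k(v)$ rather than the product of all edge weights at $v$, since the latter would only yield the weaker estimate $\Delta(\xn)^{2k}$. The cleanest approach is likely to dualize against a Kostant-Kumar-type basis of $H^*_{\tilde T}(Y_k\cap V_P)$, which by GKM has polynomial fixed-point entries and pairs with the $A_{k,w}$ to give exactly the triangular denominator structure needed.
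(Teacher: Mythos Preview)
Your step (A) contains a genuine gap: the claim that the denominator of $c_{w,v}$ divides $\lt_k(v)$ is false, already for the full affine flag variety. Look at the paper's explicit example of $A_{(1,6,2)}$: the coefficient at $p_{(2,3,4)}$ has denominator $(\eps+x_1-x_3)(x_1-x_2)(x_2-x_3)$, while $(2,3,4)=\rho$ has $\inv(\rho)=0$ and hence $\lt((2,3,4))=1$. More basically, for the ordinary flag variety $\Fl_3$ the top class $A_{w_0}$ has coefficient $1/\Delta(\xn)$ at the identity fixed point, where again the leading term is $1$. Your proposed fix via dualizing against a Kostant--Kumar-type basis does not rescue this: inverting an upper-triangular matrix with polynomial entries and diagonal $\lt_k(v)$ produces entries whose denominators are \emph{products} of several $\lt_k(u)$'s along a chain, not a single $\lt_k(v)$. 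So you cannot deduce the bound on $c_{w,v}$ from the edge count at $v$ alone; the denominators are a global phenomenon.

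The paper's proof avoids this entirely by working one prime $(x_i-x_j)$ at a time rather than seeking a uniform leading-term bound. For fixed $i<j$ and fixed $v$, it constructs a cohomology class $\chi_v$ in $H^*_T(Y_k\cap V_w)$ whose value at $v$ vanishes to order exactly $k$ along $x_i=x_j$, and whose value elsewhere contributes only factors coprime to $x_i-x_j$; pairing any homology class against $\chi_v$ then bounds the $(x_i-x_j)$-adic order of $c_v$. The construction of $\chi_v$ is where your $\widehat A_1$ idea actually enters: one first solves the problem for $n=2$, where the small-torus leading terms all divide $(x_1-x_2)^k$ so that a class supported at a single fixed point with value $(x_1-x_2)^k$ is automatically in cohomology; then for general $n$ one transplants this class onto the coset $W_{i,j}v$, lifts to the big torus, and clears the remaining GKM relations by multiplying by the edge weights leaving $W_{i,j}v$, which after setting $\eps=0$ are all coprime to $x_i-x_j$. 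Your root-theoretic observation that the $2k$ height-bounded reflections in a fixed direction contribute at most $k$ descents is correct, but in the paper's argument the analogous fact is used implicitly in the $n=2$ base case rather than as a global bound on $\lt_k(v)$.
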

	
	\begin{proof}
		It suffices to prove that for any $v\bruleq w$,
		and any $1\leq i < j \leq n$,
		there is a class 
		$\chi_{v}\in H^*_T(\yk \cap V_{w})$
		which is supported at $v$, and which only vanishes
		to degree $k$ at $x_i=x_j$.
		It thus follows that
		$f(\xn)(x_i-x_j)^k c_v=\langle A,\chi_v\rangle\in S$
		where $A$ is the element from \eqref{eq:denomlem},
		and $f(\xn)$ is not divisible by $x_i-x_j$,
		so that the order with which $x_i-x_j$ enters the denominator of $c_v$ is at most $k$.
		
		First, consider the case of $n=2$, and
		call that Weyl group $\awgsl'$.
		Then for any $w$, we have that
		\[\chi_v(w)=\begin{cases} (x_1-x_2)^k & w=v\\
			0 & \mbox{otherwise}\end{cases}\]
		is an element of $H^*_T(\yk \cap V_w)$.
		This is seen by noticing that
		the leading terms in
		$\covar_w$ divide into $(x_1-x_2)^k$, and that
		degree coincides with degree of vanishing of $x_1-x_2$,
		so that $\chi_v$ is related to $\covar_w$ by an 
		upper-triangular
		change of basis matrix with coefficients in $S$, and ones on
		the diagonal.
		
		Now suppose $n>2$. For any $1\leq i<j\leq n$,
		we have the subgroup $\awgsl_{i,j}\subset \awgsl$
		of all $w$ for which $w_{k}=k$ provided $k\notin \{i,j\}$, which is generated
		by $t_{i,j},t_{j-n,i+n}$, and is isomorphic to the
		affine Weyl group for $n=2$. Now, for any $v$,
		consider the isomorphism of $W$-sets
		which sends the minimal element of $W_{i,j} v$ 
		in the Bruhat order to the identity in $W'$,
		and let $w'\in W'$ be the image of $w$
		for any $w\in W_{i,j}v$.
		Then we have an element 
		$\tilde{\chi}_v(w)\in \tilde{S}$
		\[\tilde{\chi}_{v}(w)=\begin{cases}
			\tilde{\chi}'_{v'}(w')\big|_{x_1=x_i,x_2=x_j} & w\in \awgsl_{i,j} v\\
			0 & \mbox{otherwise}
		\end{cases}\]
		where $\tilde{\chi}_v' \in H^*_{\tilde{T}}(\yk')$
		is any element that lifts $\chi_{v'}$ from the last paragraph
		to the big torus $\tilde{T}'$. Then using the GKM
		relations, we see that
		\[\left(\prod_{u\in \left(\inneighbors(v) \cup \outneighbors(v)\right) -\awgsl_{i,j}}
		\wf(u,v)\right) \tilde{\chi}_{v} \in 
		H^*_{\tilde{T}}(Y),\]
		and maps to the desired element in
		$H^*_T(Y_k)$ under the specialization
		$\tilde{S}\rightarrow S$.
		
	\end{proof}
	We can now prove the theorem.

	\begin{proof}
		
		Denote by $M$ the module on the right side in \ref{thm:intitem}. Then the first inclusion
		$H_*^T(Y_k)\subset M$ follows from
		Lemma \ref{lemma:maxdenom}.
		
		To check the reverse inclusion, we compare the
		leading terms. Let $P\subset W$ be any 
		lower set, let $w \in P$ be any maximal element,
		and suppose $Q=P-\{w\}$.
		Then we have a map
		$M \cap H_*^T(V_P)/M \cap H_*^T(V_Q)\rightarrow S$
		which extracts the coefficient of $p_w$
		and multiplies by $\Delta(\xn)^k$,
		and similarly for $H_*^T(Y_k)$, whose images
		are given by ideals $I,I'\subset S$ respectively. 
		Then $I'$ is principally generated
		by $f_k(w)=\Delta(\xn)^k \lt_{k}(w)^{-1}\in S$.
		On the other hand, using the explicit form
		of \eqref{eq:edgesasp}, we have
		\begin{equation}
			\label{eq:dinvgcd}
			f_k(w)=\prod_{1\leq i<j\leq n} (x_i-x_j)^{\max
				\left(k-\#\outneighbors_{i,j}(w),0\right)},
		\end{equation}
		where $\outneighbors_{i,j}(w)$ is the set
		of elements $t_{a,b}w\in \outneighbors(w)$
		for the affine flag variety, i.e. inversions
		of $w^{-1}$,
		for which $a,b$ are congruent to 
		$i,j$ modulo $n$. But by the definition of
		$M$, each element of $I$ must be
		divisible by this element, so we have $I=I'$.
		By induction on the lower set $P$, we have
		that $H_*^T(Y_k)=M$. Notice also that
		$\deg\left(f_k(w)\right)=
		\dinv_k(w)$, using \eqref{eq:dinvw}.
		
		We have already established \ref{thm:resitem} in Proposition \ref{prop:invariants springer}.
		
		To prove \ref{thm:frobitem}, we have
		\[[X^\alpha] \frob_{Y,X} H_*^T(Z_k)= \frob_{Y}
		H_*^T(Z_k^{\alpha})\]
		\[=\sum_{S_n w S_\alpha \in S_n \backslash W_+ /S_\alpha}
		t^{|w|}q^{-\rank\left(E^\alpha_{S_n w S_\alpha} \cap Z^\alpha_k\right)}
		\frob_Y H_*^T(\rsh^{\alpha'}_{\pi})\]
		\[=\sum_{S_n w S_\alpha \in S_n \backslash W_+ /S_\alpha}
		t^{|w|}q^{\#\uio(\pi)-n'(\alpha')-\inv_k(w_+ S_\alpha)}
		\frob_Y H_*^T(\rsh^{\alpha'}_{\pi})\]
		\[=\sum_{S_n w S_\alpha \in S_n \backslash W_+ /S_\alpha}
		\frac{t^{|w|}q^{-\inv_k(w_+S_\alpha)}}
		{(1-q)^n\aut_q(\alpha')}
		\csfa_{\pi}[Y;q]\]
		\[=[X^\alpha]\sum_{[\mn,\an]}
		\frac{t^{|\mn|}q^{\dinv_k(\mn,-\an)-k\binom{n}{2}}}
		{(1-q)^n\aut_q(\mn,\an)} X_\an
		\csfa_{\pi}[Y;q]\]
		\begin{equation}
			\label{eq:frobspring}
			=[X^\alpha]
			\omega_X q^{-k\binom{n}{2}} \nabla^k_X 
			e_n\left[\frac{XY}{(1-q)(1-t)}\right].
		\end{equation}
		
		In the first line we used item \ref{thm:resitem}
		to compute the character in terms of the parabolic.
		To get to the second line, we used the additivity of Frobenius characters from \eqref{eq:longshort}.
		In the third line we took the second formula
		for the rank of $E^\alpha_{S_n w S_\alpha}\cap Y_k$ from 
		\eqref{eq:rankealpha}, where $w_+\in S_n w S_\alpha$ is a maximal element, remembering that 
		$\inv_k(w_+ S_\alpha)=\inv_k(w_-)$ when $w_-$ is a minimal
		element in $w_+ S_\alpha$. To get the next line,
		we apply \eqref{eq:authess}, which cancels
		some of the exponents of $q$ in the numerator.
		In the second to last line, we apply the
		rules at the end of Section \ref{sec:affperm},
		and in the final line we apply Corollary
		\ref{cor:supthm}.
		
	\end{proof}

	\section{The lattice action and nabla positivity}
	
	We now analyze the action of the lattice $\mathbb{Z}^n$
	on $H_*^T(Z_k)$, and present a new 
	conjecture which would categorify
	(and therefore imply) nabla positivity,
	Conjecture \ref{conj:nabpos}.
	
	\subsection{The lattice action}
	\label{sec:koszul}
	
	We have a lattice $\mathbb{Z}^n \subset W$ 
	generated by the elements
	$y_i:i\mapsto i+n$, and leaves $j$ fixed
	when $j$ is not congruent to $i$ modulo $n$, so that 
	$W=\mathbb{Z}^n\rtimes S_n=S_n \ltimes\mathbb{Z}^n$.
	The full Weyl group $W$ acts on the 
	left by the dot action on homology,
	though not on the space level, since the $S_n$-component
	permutes the diagonal elements of $\gamma_k$.
	However, the lattice acts on the space level
	since the translation elements preserve the
	diagonal matrix $\gamma_k$, and in
	fact the quotient $\mathbb{Z}^n\backslash Y_{k}$ is a
	central object from \cite{goresky2004unramified},
	in connection with orbital integrals in general
	root systems. They studied the homology group 
	$H_*(\Z^n \backslash \yk)$ 
	from induced action
	of $\C[y_1^{\pm 1},...,y_n^{\pm 1}]=\C[\Z^n]$  on $H_*(\yk,\mathcal{L})$ with local coefficients. 
	
	The positive affine Springer
	fiber $\zk$ is only invariant under the semigroup
	$\mathbb{Z}_{\geq 0}^n \subset \mathbb{Z}^n$
	generated by $y_i$. This corresponds to a
	restricted action of the polynomial algebra
	$\C[y_1,...,y_n]\subset
	\C[y_1^{\pm 1},...,y_n^{\pm 1}]$ on 
	$H^T_*(\zk)$, in which the degree in 
	$y_i$ is the $t$ grading in nabla formulas.
	We will show directly that this action is free,
	which has an analogue on the polygraph or
	``coherent'' side mentioned in the introduction.
	
	The space $F \cdot W$ is naturally isomorphic to the algebra $F[y_1^{\pm 1},...,y_n^{\pm 1}] \rtimes S_n$. We have
	\begin{prop}
		We have
		\begin{enumerate} 
			\item \label{item:algprop1} The image of $H^T_*(Y)$ in $F[y_1^{\pm 1},...,y_n^{\pm 1}]\rtimes S_n$ is the algebra generated by the finite difference operators $A_i=\frac{1-s_i}{x_i-x_{i+1}}$ ($i=1,\ldots,n-1$) and the elements $x_i, y_i^{\pm 1}$ ($i=1,\ldots,n$).
			\item \label{item:algprop2} The image of $H^T_*(Z)$ in $F[y_1,...,y_n] \rtimes S_n$ is the algebra generated by  $A_i$ ($i=1,\ldots,n-1$) and the elements $x_i, y_i$ ($i=1,\ldots,n$).
		\end{enumerate}
	\end{prop}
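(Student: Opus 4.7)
The plan is to derive both parts from the identification $H^T_*(Y) \cong \aafgl = \bigoplus_{d\in\mathbb{Z}} \rho^d \aaf$ of Proposition \ref{prop:aaf}, by rewriting the generators of $\aafgl$ in Bernstein style: the affine simple reflection $s_0$ and the rotation $\rho$ will be expressed as products of the finite simple reflections $s_j$ ($j<n$) together with lattice elements $y_i^{\pm 1}$, under the identification $F\cdot W \cong F[y_1^{\pm 1},\ldots,y_n^{\pm 1}] \rtimes S_n$ coming from $W = S_n \ltimes \mathbb{Z}^n$.

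For part \ref{item:algprop1}, let $\mathcal B$ denote the algebra generated by the listed elements. The inclusion $\mathcal B \subseteq H^T_*(Y)$ is immediate: $A_i, x_i \in \aaf$ by Proposition \ref{prop:aaf}, and each $y_i$ already sits in $W \subset \aafgl$ since one can write e.g.\ $y_1 = \rho\, s_{n-1} s_{n-2}\cdots s_1$, with each $s_j = 1 - (x_j - x_{j+1})A_j$ built out of the finite $A$'s. For the reverse inclusion, I will check the two window-notation identities
\[
\rho \eq s_1 s_2 \cdots s_{n-1}\, y_n, \qquad
s_0 \eq y_1\, y_n^{-1}\, s_{(1,n)},
\]
where $s_{(1,n)} = s_1 s_2\cdots s_{n-2} s_{n-1} s_{n-2}\cdots s_1$ is the long transposition in $S_n$. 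Combined with $s_i = 1 - (x_i - x_{i+1}) A_i \in \mathcal B$, these place both $\rho$ and $s_0$ in $\mathcal B$, and therefore $A_0 = (1 - s_0)/(x_n - x_1) \in \mathcal B$. Since $\aafgl$ is generated as an algebra by $A_0,\ldots,A_{n-1}$, $S$ and $\rho^{\pm 1}$, this completes the argument.

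Part \ref{item:algprop2} runs in parallel, using that $W_+ = S_n \ltimes \mathbb{Z}^n_{\geq 0}$ is both a submonoid of $W$ and a Bruhat lower set, as noted in the introduction. Hence $F\cdot W_+ \cong F[y_1,\ldots,y_n]\rtimes S_n$ is a subalgebra of $F\cdot W$, and $H^T_*(Z) = H^T_*(Y) \cap F\cdot W_+$ inherits an algebra structure. The inclusion $\mathcal B_+ \subseteq H^T_*(Z)$ of the corresponding subalgebra is again clear since $s_i, y_i \in W_+$. For the reverse inclusion, I will argue by induction on the $t$-grading $|w|$: in $t$-degree zero $H^T_*(Z)$ reduces to the equivariant homology of the finite flag variety $GL_n/B$, well-known to be generated by $A_1,\ldots,A_{n-1}$ and $S$, while the transition to higher $t$-degree is effected by multiplication by the lattice generators $y_1,\ldots,y_n$, each raising $|w|$ by one.

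The hard part will be this inductive step in part \ref{item:algprop2}: showing that every Schubert class $A_v$ for $v \in W_+$ can be built in $\mathcal B_+$ even though a reduced word for $v$ in $W$ may necessarily involve $s_0$. The element $A_0$ itself is \emph{not} in $\mathcal B_+$, since the expansion from part \ref{item:algprop1} requires $y_n^{-1}$, yet the products $A_v$ with $v \in W_+$ always lie in $F\cdot W_+$ by Bruhat positivity. The Bernstein-type commutation relations in $\aafgl$ must then be leveraged to produce $y_n^{-1}$-free expressions, with the naive negative-power contributions canceling in accordance with this positivity constraint.
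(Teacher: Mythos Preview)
Your argument for part \ref{item:algprop1} is correct and essentially identical to the paper's: both of you express $\rho$ and $A_0$ (equivalently $A_n$) in terms of the finite $A_i$, the $x_i$, and $y_i^{\pm 1}$, using minor variants of the same window-notation identities.

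Part \ref{item:algprop2}, however, has a genuine gap. You correctly identify the difficulty --- reduced words for $v\in W_+$ may involve $s_0$, and $A_0\notin\mathcal B_+$ --- but your proposed resolution (``Bernstein-type commutation relations \ldots\ must then be leveraged to produce $y_n^{-1}$-free expressions, with the naive negative-power contributions canceling'') is a hope, not an argument. The claim that the passage from $t$-degree $d-1$ to $d$ is ``effected by multiplication by the lattice generators $y_1,\ldots,y_n$'' is exactly what needs proof, and commuting $A_0$ past monomials in the $y_i$ via Bernstein relations does not obviously eliminate the negative powers.

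The paper sidesteps this entirely by never using $A_0$. The key observation is that $\rho = y_1 s_1\cdots s_{n-1}$ itself lies in $\mathcal B_+$, and one inducts on $A_w$ for $w\in W_+$ as follows. If $w$ has a right descent $s_i$ with $1\le i\le n-1$, then $ws_i<w$ lies in $W_+$ (a Bruhat lower set), and $A_w=A_{ws_i}A_i$. If there is no such descent then $0<w_1<\cdots<w_n$; provided $w\neq e$ we must have $w_n>n$, so $w' := w\rho^{-1}$ has window $(w_n-n,w_1,\ldots,w_{n-1})$, which is again in $W_+$, and $A_w=A_{w'}\rho$. Each step either decreases length or decreases $|w|$, so the process terminates at the identity. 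This is the missing ingredient in your sketch: replace the problematic generator $A_0$ by the harmless element $\rho\in\mathcal B_+$, and check that right-dividing by $\rho$ keeps you inside $W_+$ precisely in the case where no finite right descent is available.
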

	\begin{proof}
		The operator $A_n$ can be written in terms of $\rho$ as
		\[
		A_n = \rho A_{n-1} \rho^{-1},
		\]
		whereas $\rho$ can be obtained from 
		\[
		\rho = y_1 s_1 \cdots s_{n-1},\qquad s_i = 1 - (x_i - x_{i+1}) A_i,
		\]
		Proving the first part. To show the second, 
		consider $A_w$ for any $w\in W_+$. If $w=w' s_i$ with $w'<w$ and $1\leq i \leq n-1$ we can write $A_w=A_{w'} A_i$, otherwise we would have $0<w_1<\ldots< w_n$. If $w$ is not the identity permutation, then $w_n>n$ and we have $w'=w \rho^{-1}\in W_+$, so we can write $A_w=A_{w'} \rho$.
	\end{proof}
	
	\subsection{The symmetry}
	
	Recall that the subspace $H^T_*(Z_k)\subset H^T_*(Z)$ is described as $H^T_*(Z_k)=M_k$, where $M_k=H_*^T(Z) \cap \Delta(\xn)^{-k}  S[y_1,\ldots,y_n] \rtimes S_n$. The Frobenius character of $H_*^T(Y_k)$ as given by 
	Theorem \ref{thm:frob}
	when multiplied by $q^{k\binom{n}2}$ is a symmetric function in $q$ and $t$. Thus one may guess that there is some sort of symmetry that interchanges the $x$ and the $y$ variables.
	\begin{thm}
		\label{thm:gl2}
		There is an action of $GL_2(\C)$ on $M_k$ such that the diagonal torus action corresponds to the bigrading and the involution $\begin{pmatrix}0&1\\1&0\end{pmatrix}$ satisfies
		\[
		\begin{pmatrix}0&1\\1&0\end{pmatrix} x_i \begin{pmatrix}0&1\\1&0\end{pmatrix} = y_i.
		\]
	\end{thm}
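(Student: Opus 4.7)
The plan is to exhibit an $\mathfrak{sl}_2$-triple $(E,F^{\mathrm{new}},H^{\mathrm{new}})$ of explicit operators on $F\cdot W$ whose action preserves the submodule $M_k=\aafgl\cap\Delta(\xn)^{-k}S\cdot W_+$, to verify the Serre relations, and then to integrate by local finiteness on each bigraded piece to an algebraic $GL_2(\C)$-action on $M_k$. Adjoining the central total-degree operator promotes this to a $\mathfrak{gl}_2$-action under which the diagonal torus recovers the bigrading. The Weyl-element identity $\omega x_i\omega=y_i$ for $\omega=\begin{pmatrix}0&1\\1&0\end{pmatrix}=\exp(E)\exp(-F^{\mathrm{new}})\exp(E)$ will then follow from the standard $\mathfrak{sl}_2$ exponential computation, once $E$ and $F^{\mathrm{new}}$ are chosen so that $[F^{\mathrm{new}},x_i]=y_i$ and $[E,y_i]=x_i$.

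Under the identification $F\cdot W\cong F[y_1,\dots,y_n]\rtimes S_n$ from Section~\ref{sec:koszul}, the natural first candidates are
\[
H \eq \sum_i\bigl(x_i\partial_{x_i}-y_i\partial_{y_i}\bigr),\quad E \eq \sum_i x_i\partial_{y_i},\quad F \eq \sum_i y_i\partial_{x_i},
\]
with $\partial_{x_i}$ differentiating the coefficient in $F=\C(\xn)$ and $\partial_{y_i}$ acting on the polynomial factor. The Serre relations $[H,E]=2E,\ [H,F]=-2F,\ [E,F]=H$ follow from the canonical commutators, and $E,H$ manifestly preserve both factors of $M_k$. However $F$ fails: on $A_{s_1}=\Delta(\xn)^{-1}(p_e-p_{s_1})\in M_1$ (case $n=2$) it produces $(y_2-y_1)\Delta(\xn)^{-2}(p_e-p_{s_1})$, whose $\Delta^{-2}$ denominator escapes $\Delta(\xn)^{-1}S\cdot W_+$. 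The remedy is to conjugate $F$ by the alternant, setting
\[
F^{\mathrm{new}} \eq \Delta(\xn)^{-k}F\,\Delta(\xn)^{k} \eq \sum_i y_i\partial_{x_i} + k\sum_{i<j}\frac{y_i-y_j}{x_i-x_j}.
\]
Preservation of $\Delta(\xn)^{-k}S\cdot W_+$ is immediate, since $F^{\mathrm{new}}(\Delta(\xn)^{-k}\tilde m)=\Delta(\xn)^{-k}F(\tilde m)$ for any $\tilde m\in S[y]\rtimes S_n$. The Serre relations survive upon replacing $H$ by $H^{\mathrm{new}}=H+k\binom{n}{2}$, the central shift being produced by $[E,F^{\mathrm{new}}]-[E,F]=k\sum_{i<j}(x_i-x_j)^{-1}[E,y_i-y_j]=k\binom{n}{2}$.

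The main obstacle is then to show that $F^{\mathrm{new}}$ also preserves the first factor $\aafgl$, since its Dunkl-type correction is multiplication by a rational function not itself in $\aafgl$; preservation must come from delicate cancellation between the two summands. The $n=2$, $k=1$ sanity check is encouraging: a direct computation gives $F^{\mathrm{new}}(A_{s_1})=0$ and $F^{\mathrm{new}}(p_e)=\Delta(\xn)^{-1}(p_{y_1}-p_{y_2})=A_{y_1}+A_{y_2}\in\aafgl$, where the apparent pole at $x_1=x_2$ is resolved by the antisymmetrization implicit in the Kostant--Kumar basis. In general, I would reduce the verification to the algebra generators supplied by Proposition~\ref{prop:aaf}: by induction on word length it suffices to check that the commutators $[F^{\mathrm{new}},x_j]$, $[F^{\mathrm{new}},\rho^{\pm 1}]$, and $[F^{\mathrm{new}},A_i]$ each land in $\aafgl$, which reduces to a finite combinatorial verification using the nil-Hecke relations \eqref{eq:aafrel}. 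Once $\aafgl$-preservation is established, local finiteness of the triple on each bidegree piece yields the desired $GL_2$-integration, and the Weyl-element conjugation identity follows from the $\mathfrak{sl}_2$-exponential calculation together with $[F^{\mathrm{new}},x_i]=y_i$ and $[E,y_i]=x_i$.
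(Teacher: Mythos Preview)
Your approach is essentially the same as the paper's, modulo conjugation by $\Delta(\xn)^k$: the paper acts with the undeformed triple $(E,F,H)$ on $\Delta(\xn)^k M_k$, whereas you act with the conjugated triple $(E,F^{\mathrm{new}},H^{\mathrm{new}})$ on $M_k$ itself. These are equivalent since $E$ commutes with $\Delta(\xn)^k$ and the two versions of $H$ differ by the central shift you found.

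The one place worth flagging is your ``finite combinatorial verification'' for preservation of the nil-Hecke side by $F^{\mathrm{new}}$. Your induction-on-word-length argument is valid, but it needs two ingredients you do not make explicit. First, the base case: $F^{\mathrm{new}}(1)=k\sum_{i<j}\frac{y_i-y_j}{x_i-x_j}$ must itself lie in $\aafgl$. Second, the commutators $[F^{\mathrm{new}},A_i]$ must be \emph{left multiplication} by elements of $\aafgl$, not arbitrary operators; this holds because the correction term is $S_n$-invariant and hence commutes with each $A_i$, so $[F^{\mathrm{new}},A_i]=[F,A_i]=-\frac{y_i-y_{i+1}}{x_i-x_{i+1}}A_i$. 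Both points reduce to the identity
\[
\frac{y_i-y_{i+1}}{x_i-x_{i+1}} \;=\; A_i y_i - y_{i+1} A_i \;\in\; \aafgl,
\]
which is exactly the computation the paper singles out. The non-adjacent terms $\frac{y_i-y_j}{x_i-x_j}$ then follow by conjugating with $s_\ell=1-(x_\ell-x_{\ell+1})A_\ell\in\aafgl$. So your deferred verification is correct, but it is precisely the heart of the proof rather than routine bookkeeping.

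Minor point: your factorization $\omega=\exp(E)\exp(-F^{\mathrm{new}})\exp(E)$ gives the $SL_2$ Weyl element $\begin{pmatrix}0&1\\-1&0\end{pmatrix}$; to obtain $\begin{pmatrix}0&1\\1&0\end{pmatrix}$ you must also insert the diagonal matrix $\begin{pmatrix}1&0\\0&-1\end{pmatrix}$ coming from the $GL_2$ torus, as the paper does.
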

	\begin{proof}
		Consider the differential operators
		\[
		E = \sum_{i=1}^n x_i \frac{\partial}{\partial y_i}, \quad F = \sum_{i=1}^n y_i \frac{\partial}{\partial x_i}, \quad H = \sum_{i=1}^n x_i \frac{\partial}{\partial x_i} - \sum_{i=1}^n y_i \frac{\partial}{\partial y_i}.
		\]
		Let these operators act on $F[y_1,\ldots,y_n]$ and therefore on $F[y_1,\ldots,y_n] \rtimes S_n$ coefficient-wise. It is straightforward to check that the triple $(E,F,H)$ is a representation of the Lie algebra $sl_2$, and that these operators preserve $S[y_1,\ldots,y_n] \rtimes S_n$. Let us verify that they also preserve $\Delta(\xn)^k H_*^T(Z)$. We easily check that
		\[
		[E,A_i] = 0,\qquad [E,x_i]=0,\qquad [E,y_i]=x_i,
		\]
		\[
		[H,A_i] = -A_i,\qquad [H,x_i] = x_i, \qquad [H,y_i]=-y_i,
		\]
		\[
		[F,x_i]=y_i,\qquad [F,y_i] = 0,
		\]
		and with a little bit more work that
		\[
		[F,A_i] = - \frac{y_i - y_{i+1}}{x_i-x_{i+1}} A_i, \qquad   \frac{y_i - y_{i+1}}{x_i-x_{i+1}} =A_i y_i -  y_{i+1} A_i.
		\]
		Moreover, we have
		\[
		E \Delta(\xn)^k = \Delta(\xn)^k E,\qquad H \Delta(\xn)^k = \Delta(\xn)^k (H+k\tbinom{n}2),
		\]
		\[
		F \Delta(\xn)^k = \Delta(\xn)^k \left(F + k \sum_{i<j} \frac{y_i - y_{j}}{x_i-x_{j}}\right).
		\]
		Thus we see that the operators $E,F,H$ preserve both $\Delta(\xn)^k H_*^T(Z)$ and $S[y_1,\ldots,y_n] \rtimes S_n$, and hence also the intersection $\Delta(\xn)^k M_k$.
		
		Since the bigrading on $\Delta(\xn)^k M_k$ has only non-negative degrees, the operators $E$, $F$ are locally nilpotent, which implies that the action of $sl_2$ extends to an action of $SL_2(\C)$. The center of $GL_2(\C)$ acts by the total degree, giving an action of $GL_2(\C)$.
		
		Now let us verify the involution property. 
		We begin by writing
		\[
		\begin{pmatrix}0&1\\1&0\end{pmatrix} = \begin{pmatrix}1&0\\0&-1\end{pmatrix} \exp(E) \exp(-F) \exp(E).
		\]
		Then the statement follows from the following computations:
		\[
		\exp(E) x_i = x_i \exp(E),\qquad \exp(-F) x_i = (x_i-y_i) \exp(-F),
		\]
		\[
		\exp(E)(x_i-y_i) = -y_i \exp(E),\qquad \begin{pmatrix}1&0\\0&-1\end{pmatrix} y_i = -y_i \begin{pmatrix}1&0\\0&-1\end{pmatrix}.
		\]
	\end{proof}
	Since $M_k$ is free over $S$ we obtain
	\begin{cor}
		\label{cor:yaction}
		We have that $H_*^{T}(\ypk)$ 
		is free as a module over $\C[\yn]$.
	\end{cor}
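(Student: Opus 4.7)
The plan is to combine the known $S$-freeness of $M_k = H_*^T(Z_k)$ with the $GL_2$-symmetry established in Theorem \ref{thm:gl2}, which swaps the roles of the $x$ and $y$ variables. All the hard work has already been done in Theorem \ref{thm:gl2}; the corollary is essentially a transport-of-structure argument.

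First I would recall that $M_k$ is free as a module over $S = \C[x_1,\ldots,x_n]$. This is the statement used in the preamble to the corollary and follows from the standard GKM setup reviewed in Section \ref{sec:gkm}: the positive part $Z_k$ is paved by the affine cells $\Omega_w \cap Z_k$ indexed by $w \in W_+$, each connected component involves only finitely many such cells, and by the Graham reference invoked in Section \ref{sec:gkm} the fundamental classes $\{A_{k,w}\}_{w\in W_+}$ form an $S$-basis of $H_*^T(Z_k)$.

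Next, I would invoke the involution $\sigma = \bigl(\begin{smallmatrix}0&1\\1&0\end{smallmatrix}\bigr) \in GL_2(\C)$ produced in Theorem \ref{thm:gl2}. By construction $\sigma$ is a $\C$-linear automorphism of $M_k$ satisfying $\sigma x_i \sigma^{-1} = y_i$, which means $\sigma(x_i\cdot m) = y_i \cdot \sigma(m)$ for every $m \in M_k$. Thus $\sigma$ is an isomorphism of abelian groups that intertwines the $\C[x_1,\ldots,x_n]$-module structure on the source with the $\C[y_1,\ldots,y_n]$-module structure on the target. Applying $\sigma$ to the $S$-basis $\{A_{k,w}\}_{w\in W_+}$ yields a $\C[y_1,\ldots,y_n]$-basis $\{\sigma(A_{k,w})\}_{w\in W_+}$ of $M_k$, proving freeness.

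The only conceptual obstacle would be if $\sigma$ failed to preserve $M_k$ or if the module-theoretic intertwining were off, but both are guaranteed by Theorem \ref{thm:gl2}: the proof there explicitly shows that $E, F, H$ preserve $\Delta(\xn)^k M_k$, hence integrate to an action of $GL_2(\C)$ on $M_k$, and the stated conjugation identity $\sigma x_i \sigma = y_i$ gives precisely the needed intertwining relation. No further computation is required.
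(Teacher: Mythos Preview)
Your proposal is correct and is exactly the paper's approach: the paper's entire proof is the single phrase ``Since $M_k$ is free over $S$ we obtain,'' and you have spelled out precisely the transport-of-structure argument that this sentence encodes, using the involution from Theorem~\ref{thm:gl2} to carry an $S$-basis to a $\C[\yn]$-basis.
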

	
	\begin{cor}
		\label{cor:frobquot}
		We have that
		\begin{equation}
			\label{eq:frobquot}
			\frob_{Y,X} (N_k)=q^{-k\binom{n}{2}}\omega_X
			\nabla^k e_n\left[\frac{XY}{1-q}\right].
		\end{equation}
		where $N_k=H_*^{T}(\ypk)/(y_1,...,y_n)H_*^{T}(\ypk)$.
	\end{cor}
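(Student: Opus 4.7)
The plan is to invoke the plethystic Tor identity \eqref{eq:plethysm}, applied now with the $y$-variables (and the $t$-grading) in place of the $x$-variables (and the $q$-grading), and to combine it with the freeness of $H_*^T(Z_k)$ over $\C[\yn]=\C[y_1,\ldots,y_n]$ established in Corollary \ref{cor:yaction}. The setup of \eqref{eq:plethysm} applies here because the dot $S_n$-action permutes the $y_i$'s correctly: one checks in $W$ that $\sigma y_i=y_{\sigma(i)}\sigma$, so $H_*^T(Z_k)$ is a graded $\C[\yn]\rtimes S_n$-module. The $X$-alphabet (star action) is carried along as a spectator because the star action is on the right and commutes with left multiplication by $y_i$, so it descends unchanged to $N_k$.

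First I would observe that by Corollary \ref{cor:yaction} all higher Tor groups vanish,
\[
\Tor_i^{\C[\yn]}(H_*^T(Z_k),\C)=0\quad\text{for }i>0,
\]
so that the Euler-characteristic identity \eqref{eq:plethysm} collapses to the single equation
\[
\frob_{Y,X}(N_k)\;=\;\frob_{Y,X}\bigl(H_*^T(Z_k)\bigr)\bigl[Y(1-t)\bigr].
\]
Second, I would substitute the closed form from Theorem \ref{thm:frob}\ref{thm:frobitem}. Since $\omega_X$ and $\nabla_X^k$ act only on the $X$-alphabet, they commute with the $[Y(1-t)]$-plethysm, so the computation reduces to applying that plethysm to the plethystic exponent inside $e_n$. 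On power sums,
\[
p_k\!\left[\tfrac{XY}{(1-q)(1-t)}\right]=\tfrac{p_k(X)\,p_k(Y)}{(1-q^k)(1-t^k)},
\]
and the substitution $p_k(Y)\mapsto p_k(Y)(1-t^k)$ cancels the $(1-t^k)$ factors exactly, turning the argument of $e_n$ into $\tfrac{XY}{1-q}$. This yields the desired identity.

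Essentially no obstacle remains at this stage: the hard work is entirely in producing freeness, which is Corollary \ref{cor:yaction}, and that in turn rests on the $GL_2$-symmetry of Theorem \ref{thm:gl2} (or alternatively on the distributive-lattice property of the subvarieties $Z_k^i$). Once freeness is in hand, the argument above is purely formal plethystic bookkeeping, and the $(1-t)$ in the denominator of Theorem \ref{thm:frob}\ref{thm:frobitem} is exactly what the plethysm $[Y(1-t)]$ is designed to kill. I would take care only to verify, as noted above, the $S_n$-equivariance of the $\yn$-action and the commutation of the star action with the $y_i$, so that both the bigrading and the two $S_n$-actions are preserved by the quotient.
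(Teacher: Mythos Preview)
Your proposal is correct and takes essentially the same approach as the paper: the paper's one-line proof simply says to apply the plethystic substitution $Y\mapsto Y(1-t)$ to both sides of Theorem~\ref{thm:frob}\ref{thm:frobitem}, which implicitly relies on exactly the ingredients you spell out---freeness over $\C[\yn]$ from Corollary~\ref{cor:yaction} to collapse the Euler characteristic in \eqref{eq:plethysm} to $\Tor_0=N_k$, and the verification that the dot action intertwines the $y_i$. Your version is the paper's proof with the implicit steps made explicit.
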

	
	\begin{proof}
		This follows by applying the plethystic substitution 
		$Y\mapsto Y(1-t)$ to both sides of 
		Theorem \ref{thm:frob}, item \ref{thm:frobitem}.
	\end{proof}
	
	\subsection{Koszul submodules}
	We begin by describing the $2^n$ submodules
	\[\yn^{\sn} H^T_*(\zk)=(y_1^{s_1}\cdots y_n^{s_n}) H^T_*(\zk)\]
	for $s_i\in \{0,1\}$, which constitute the terms
	in the Koszul resolution,
	as the homologies of certain closed 
	ind-subvarieties $Z^{\sn}_k\subset Z_k$ as follows.
	\begin{defn}
		For each $\sn=(s_1,...,s_n)\in \{0,1\}^n$, define
		\[Z^{\sn}=\left\{V_0\subset \cdots \subset V_n
		\in \aff: z^{-s_i}\C[[z]] e_i \in V_n\right\}\]
		considered as an ind-variety 
		\[Z^\sn=\lim_{\rightarrow} Z^{\sn} \cap V_w\]
		over all Schubert varieties $V_w\subset \yp$.
		Let $Z^{\sn}_k=Z^{\sn} \cap \ypk$ be the intersection with the affine Springer fiber, also as an ind-variety,
		and let $Z^i_k=Z^{\sn}_k$ where $\sn$ 
		is defined by $s_i=1$, and all other entries are zero.
		In particular, we have $Z_k^{0^n}=\zk$, and the fixed point
		set is given by $\left(Z^{\sn}_k\right)^T=W_{\sn}$ where
		\begin{equation}\label{eq:fixedzk}
			W_{\sn}= \yn^{\sn} W_+=
			\left\{
			w\in W_+: w^{-1}_i/n\leq 1-s_i\mbox{ for $1\leq i \leq n$}\right\}.
		\end{equation}
	\end{defn}
	
	We have the following proposition.
	\begin{prop}
		\label{prop:zpsinj}
		Let $P\subset W_+$ be a lower set which is invariant
		under the left $S_n$ action, and let $V_P \subset \yp$
		be the corresponding union of Schubert varieties. Then we have
		\begin{enumerate}
			\item \label{prop:zpsinj1} Each $Z^{\sn}_k\cap V_P$ is a GKM subvariety of $\zk$ with
			respect to the big torus $\tilde{T}$ and we have $H_*^{T}(Z^{\sn}_k \cap V_P) = H_*^{T}(Z^{\sn}_k) \cap H_*^{T}(V_P)$.
			\item \label{prop:zpsinj2} If $Q \subset P$ and $\tn\geq \sn$ componentwise, then
			$H_*^{T}(Z^{\tn}_k\cap V_Q)\hookrightarrow H_*^{T}(Z^{\sn}_k \cap V_P)$
			is injective, and the image 
			splits as a direct summand of $S$-modules.
			\item \label{prop:zpsinj3} The image of $H_*^{T}(Z^{\sn}_k)\hookrightarrow H_*^{T}(\ypk)$
			is given by $\yn^\sn H_*^T(\ypk)$. 
		\end{enumerate}
	\end{prop}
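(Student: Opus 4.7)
The key geometric fact I would establish first is the space-level isomorphism $\yn^\sn\colon Z_k\xrightarrow{\sim} Z^\sn_k$ realized by left multiplication by the lattice element $\yn^\sn\in W$, viewed as the matrix $\diag(z^{-s_1},\ldots,z^{-s_n})\in G(\mathcal{K})$. A direct computation gives $\yn^\sn\cdot\C[[z]]e_i=z^{-s_i}\C[[z]]e_i$, whence $\yn^\sn(Z)=Z^\sn$; since $\yn^\sn$ commutes with $\gamma_k$, this restricts to Springer fibers. This proves \ref{prop:zpsinj3} at once: the composition $Z_k\xrightarrow{\yn^\sn}Z^\sn_k\hookrightarrow Z_k$ is the space-level action of $\yn^\sn\in W$, so on equivariant Borel--Moore homology it is the dot action, which by the immediately preceding proposition equals left multiplication by $y_1^{s_1}\cdots y_n^{s_n}$ in $F[y_1,\ldots,y_n]\rtimes S_n$. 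The pushforward along the closed inclusion is injective because the associated long exact sequence splits by even-degree concentration of both $H_*^T(Z_k)$ and $H_*^T(Z_k\setminus Z^\sn_k)$; its image equals the image of the composition, namely $\yn^\sn H_*^T(\ypk)$.

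For \ref{prop:zpsinj1} the plan is to produce a $\tilde T$-invariant affine paving of $Z^\sn_k\cap V_P$ indexed by $W_\sn\cap P$. For each $w\in W_+$, the Goresky--Purity cell $\Omega_w\cap Y_k$ carries an explicit parameterization coming from the Iwahori, and the closed condition $L^\sn=\sum_i z^{-s_i}\C[[z]]e_i\subset gV_n^w$ defining $Z^\sn$ pulls back to linear vanishing conditions in these parameters, so $\Omega_w\cap Z^\sn\cap Y_k$ is an affine subspace of $\Omega_w\cap Y_k$. Since any closed $\tilde T$-invariant subset of an affine cell either contains the unique fixed point or is empty, this intersection is empty precisely when $w\notin W_\sn$ (where the fixed point calculation shows $p_w\in Z^\sn\iff w\in W_\sn$ by \eqref{eq:fixedzk}), and otherwise nonempty. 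Decomposing $V_P$ into its open cells we obtain $Z^\sn_k\cap V_P=\bigsqcup_{w\in W_\sn\cap P}(\Omega_w\cap Z^\sn\cap Y_k)$, making $Z^\sn_k\cap V_P$ GKM with fixed points $W_\sn\cap P$. The homology identity $H_*^T(Z^\sn_k\cap V_P)=H_*^T(Z^\sn_k)\cap H_*^T(V_P)$ in $F\cdot W$ then follows by induction on $|P|$ in the spirit of Theorem \ref{thm:frob}\ref{thm:intitem}, matching leading terms at a maximal cell on both sides using \ref{prop:zpsinj3} and the factorization $Z^\sn_k\cap V_P=\yn^\sn(Z_k\cap(\yn^\sn)^{-1}V_P)$ to transport generators.

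Given \ref{prop:zpsinj1}, part \ref{prop:zpsinj2} is purely algebraic. The componentwise inequality $\tn\geq\sn$ gives $\yn^\tn=\yn^\sn\cdot\yn^{\tn-\sn}$, so by \ref{prop:zpsinj3} we have $H_*^T(Z^\tn_k)\subset H_*^T(Z^\sn_k)$ as submodules of $H_*^T(Z_k)$; combined with $H_*^T(V_Q)\subset H_*^T(V_P)$ from $Q\subset P$, intersecting yields the desired inclusion, injective because it is an inclusion of $S$-submodules of $F\cdot W$. Both terms are free $S$-modules with bases labeled by the affine cells of part \ref{prop:zpsinj1}, and the quotient is free on the complementary basis $(W_\sn\cap P)\setminus(W_\tn\cap Q)$, so the short exact sequence of $S$-modules splits. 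The principal technical obstacle lies in \ref{prop:zpsinj1}, specifically the linearity claim: one must verify that the lattice-containment condition $L^\sn\subset gV_n^w$ really cuts out an affine subspace in the Goresky--Purity coordinates on $\Omega_w\cap Y_k$. Without linearity, the intersections $\Omega_w\cap Z^\sn\cap Y_k$ would only be closed subvarieties, and one would need a more delicate spectral sequence or inductive argument bootstrapping from the iso with $Z_k$ and even-degree concentration to recover the GKM paving.
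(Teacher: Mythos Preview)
Your argument for \ref{prop:zpsinj3} is essentially the paper's: the space-level isomorphism $\yn^\sn\colon Z_k\to Z^\sn_k$ induces the claimed identity on homology, and injectivity follows because everything embeds in $H_*^T(Y_k)$.

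For \ref{prop:zpsinj1} you take a genuinely different route. You propose to show directly that each intersection $\Omega_w\cap Z^\sn\cap Y_k$ is an affine subspace of the Goresky--Kottwitz--MacPherson cell, by checking that the lattice-containment condition is linear in cell coordinates. This is true, and in fact the paper carries out exactly this coordinate computation in the proof of the \emph{next} proposition (the distributive-lattice one), where the condition $v(\un)\in Z_k^r$ is shown to amount to the vanishing of certain coordinates $u_{i,r}$ with $r\leq n$. So your ``principal technical obstacle'' is surmountable and already done nearby.

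The paper, however, avoids this computation entirely for the present proposition. It uses the left $S_n$-symmetry (which is available precisely because $P$ is assumed $S_n$-invariant) to reduce to the case $\sn=(1^l0^{n-l})$. For such sorted $\sn$, it observes that $Z^\sn_k$ is the image of $Z_k$ under a power of the rotation $\rho$, which preserves Bruhat order; hence $Z^\sn_k$ is literally a union of Schubert varieties, and so is $Z^\sn_k\cap V_P$. The homology identity then falls out for free: the cell classes of $Z^\sn_k\cap V_P$ are a subset of the cell classes of $Z_k$, which form a free basis, so $H_*^T(Z^\sn_k\cap V_P)$ is exactly the span of those $A_{k,w}$ with $w$ in both index sets. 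Your inductive leading-term argument for the homology identity is sketchier and would need more care (the cells $\Omega_w\cap Z^\sn_k$ in your paving are proper affine subspaces of $\Omega_w\cap Z_k$, so their closures are not the Schubert classes $A_{k,w}$, and the ``transport'' via $(\yn^\sn)^{-1}V_P$ is not obviously a union of Schubert varieties).

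For \ref{prop:zpsinj2} the paper again uses the symmetry trick: since $\sn\leq\tn$ componentwise, one permutation sorts both simultaneously, reducing to an inclusion of unions of Schubert varieties, where the splitting is immediate. Your argument via free bases on index sets is fine once \ref{prop:zpsinj1} is in hand. In short: your plan is correct but works harder than necessary; the $S_n$-reduction to sorted $\sn$ and the identification with $\rho^l Z_k$ is the clean idea you are missing.
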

	
	\begin{proof}
		To prove the GKM statement, it suffices to 
		assume $\sn=(1^l0^{n-l})$ with all ones on the left,
		using the left $S_n$-symmetry. Then $Z_k^{\sn}$
		is a union of Schubert varieties, as it
		is isomorphic to $\rho^k Z_k$ via the rotation 
		operator $\rho$ on $\yk$, which preserves the
		Bruhat order. In other words, each of the $Z^{\sn}_k$
		is a union of Schubert varieties rotated
		by some element of $S_n$. Hence $Z_k^{\sn}\cap V_P$ is also a union of Schubert varieties and therefore is GKM. The homology of $Z_k^{\sn}\cap V_P$ is spanned by the classes of cells which belong both to $Z_k^{\sn}$ and $V_P$, and since the classes of cells form a free basis of $H^T_*(Z_k)$ we see that $H_*^{T}(Z^{\sn}_k \cap V_P) = H_*^{T}(Z^{\sn}_k) \cap H_*^{T}(V_P)$.
		
		For the second statement we may assume that
		$Q\subset P$, $t_i\geq s_i$, and 
		$\sn,\tn$ are both sorted so that 
		the ones are all on the
		left, using the $S_n$ action simultaneously. 
		In this case both $Z^{\sn}_k\cap V_P,Z^{\tn}_k\cap V_Q$ are
		unions of Schubert varieties, which determines the
		splitting.
		
		For the final statement, note that both homologies embed into $H^T_*(Y_k)$ on which $\yn^\sn$ acts as an automorphism. The automorphism comes from the geometric automorphism which sends $Z_k$ to $Z_k^\sn$. Thus $\yn^\sn$ bijectively maps $H^T_*(Z_k)$ to $H^T_*(Z_k^\sn)$.

		
	\end{proof}

	Let $\mathcal{Z}_{k}=\left\{Z^{\sn}_k \cap V_P 
	\subset \zk \right\}$
	be the collection of all subspaces appearing in the above proposition, for
	\emph{finite} lower sets $P\subset W_+$,
	which is itself a poset by inclusion, and is closed under taking
	intersections. By the above proposition, the homology $H^T_*(Z^{\sn}_k \cap V_P)\subset H^T_*(Z_k)$ is a free $S$-module.

	\begin{prop}
		\label{prop:dist-lat}
		The submodules of the form $H^T_*(Z^{\sn}_k \cap V_P) \subset H^T_*(Z_k)$ generate a distributive lattice. For any union of elements of $\mathcal{Z}_{k}$ we have
		\[
		H^T_*((Z^{\sn_1}_k \cap V_{P_1}) \cup \cdots \cup (Z^{\sn_m}_k \cap V_{P_m})) = \]
		\[(H^T_*(Z^{\sn_1}_k) \cap H^T_*(V_{P_1})) + \cdots + (H^T_*(Z^{\sn_m}_k) \cap H^T_*(V_{P_m})).
		\]
	\end{prop}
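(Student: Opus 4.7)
The plan is to leverage the affine paving of $Z_k$ by Schubert cells $\Omega_w \cap Z_k$ for $w \in W_+$, whose classes $A_{k,w}$ form a free $S$-basis of $H^T_*(Z_k)$. By Proposition \ref{prop:zpsinj}, each generator $H^T_*(Z^{\sn}_k \cap V_P)$ is precisely the $S$-span of the subset $\{A_{k,w} : w \in W_{\sn} \cap P\}$ of this basis, so both assertions will reduce to statements about subsets of $W_+$.

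For the distributive-lattice claim, I observe that $S$-submodules of $H^T_*(Z_k)$ spanned by subsets of the cell basis form a Boolean (hence distributive) lattice under sum and intersection, corresponding to union and intersection of indexing subsets. To show that the generators close up into a sublattice of this one, it suffices to check that intersections remain of the form $Z^{\sn}_k \cap V_P$: directly from the definitions, $Z^{\sn_1} \cap Z^{\sn_2} = Z^{\sn_1 \vee \sn_2}$ (componentwise maximum), and $V_{P_1} \cap V_{P_2} = V_{P_1 \cap P_2}$ because each $V_P$ is the disjoint union of $\Omega_w$ for $w \in P$. Consequently every module generated from the $H^T_*(Z^{\sn}_k \cap V_P)$ by sum and intersection is the $S$-span of some subset of the cell basis, and the resulting sublattice is distributive.

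For the union formula, I would proceed by induction on the number $m$ of pieces, simultaneously establishing (a) that $H^T_*(U)$ is concentrated in even degrees, and (b) the identity $H^T_*(U) = \sum_i H^T_*(Z^{\sn_i}_k \cap V_{P_i})$ inside $H^T_*(Z_k)$, for $U = \bigcup_{i=1}^m (Z^{\sn_i}_k \cap V_{P_i})$. The base case $m = 1$ is Proposition \ref{prop:zpsinj}. For the inductive step, write $U = V \cup U_m$ with $V = \bigcup_{i<m} (Z^{\sn_i}_k \cap V_{P_i})$ and apply the Borel--Moore Mayer--Vietoris sequence to the closed decomposition $\{V, U_m\}$ of $U$. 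The intersection $V \cap U_m = \bigcup_{i<m}(Z^{\sn_i \vee \sn_m}_k \cap V_{P_i \cap P_m})$ is a union of at most $m-1$ generators, so the inductive hypothesis applies to it. The pushforward $H^T_*(V \cap U_m) \to H^T_*(V)$ corresponds, under the identification inside $H^T_*(Z_k)$, to the inclusion of $S$-spans of subsets of the cell basis and is therefore injective. Combined with the vanishing of odd-degree homology for $V$, $U_m$, and $V \cap U_m$, this injectivity forces odd-degree vanishing for $U$ and splits the long exact sequence into a short exact sequence giving $H^T_*(U) = H^T_*(V) + H^T_*(U_m)$ inside $H^T_*(Z_k)$; combining with the inductive hypothesis for $V$ then yields (b).

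The main obstacle is ensuring compatibility between the Mayer--Vietoris sequence for $U = V \cup U_m$ and the ambient embedding into $H^T_*(Z_k)$, so that the sum in the ambient module genuinely coincides with the $S$-span of the union of the relevant cell indices. This is true because all the maps in sight are pushforwards along closed inclusions of compatibly paved ind-varieties, under which basis cell classes go to basis cell classes; but the careful tracking of even-degree vanishing through the simultaneous induction on $m$ --- needed both to split the long exact sequence and to prevent odd-degree contributions from obstructing the subset-level identification --- is the most technically delicate step.
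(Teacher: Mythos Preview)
Your argument rests on the claim that $H^T_*(Z^{\sn}_k\cap V_P)$ is the $S$-span of $\{A_{k,w}: w\in W_{\sn}\cap P\}$, but Proposition~\ref{prop:zpsinj} does not say this, and in fact the claim is false whenever $\sn$ is not sorted. The proof of Proposition~\ref{prop:zpsinj} shows that $Z^{\sn}_k$ is an $S_n$-\emph{translate} of a union of Schubert varieties; for general $\sn$ the set $W_{\sn}=\yn^{\sn}W_+$ is not a Bruhat lower set, so $Z^{\sn}_k$ is not itself a union of Schubert cells, and its homology is not spanned by a subset of the Schubert basis $\{A_{k,w}\}$. Concretely, for $n=2$, $k=1$, $\sn=(0,1)$ one has $(1,4)\in W_{\sn}$ but $A_{1,(1,4)}$ is supported at the fixed point $(2,3)\notin W_{\sn}$, so $A_{1,(1,4)}\notin H^T_*(Z^{\sn}_1)\subset F\cdot W_{\sn}$. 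Thus the submodules $H^T_*(Z^{\sn}_k\cap V_P)$ are \emph{not} coordinate submodules for the basis $\{A_{k,w}\}$, and your reduction of distributivity to a statement about subsets of $W_+$ collapses. The splitting in Proposition~\ref{prop:zpsinj}(ii) only gives an abstract direct summand, not a coordinate one for this fixed basis.

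The paper's proof addresses exactly this difficulty by building a \emph{finer} paving: it intersects each Schubert cell $\Omega_w\cap Y_k$ with the locally closed pieces $Z_k^{\sn\,o}=Z_k^{\sn}\setminus\bigcup_{\tn>\sn}Z_k^{\tn}$, so that every $Z^{\sn}_k\cap V_P$ is a union of cells of the refined paving. The price is that these refined cells are no longer affine spaces; the paper carries out an explicit coordinate computation on $IwI/I\cap Y_k$ to identify each refined cell with a complement of a coordinate-subspace arrangement in an affine space, and checks directly that its equivariant Borel--Moore homology is concentrated in even degrees. Proposition~\ref{prop:homology 2} then yields the nice-lattice property. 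Your Mayer--Vietoris induction for the union formula is fine in spirit, but it too relies on the distributivity input, which is the missing piece.
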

	\begin{proof}
		Note that the two statements are essentially equivalent (see Proposition \ref{prop:homology 1}). 
		In fact we will prove the statement for more general subsets of the form $Z_k^\sn\cap V_P$, where $P\subset W_+$ is a lower set in the Bruhat order, 
		which is not necessarily $S_n$-invariant.
		
		The set $Z_k$ is paved in two ways: in one way by Bruhat cells, in another way by the $2^n$ subsets of the form
		\[
		Z_k^{\sn\, o}:= Z_k^\sn \setminus \bigcup_{\substack{\tn\geq\sn\\ \tn\neq\sn}} Z_k^\tn.
		\]
		Consider the paving obtained by intersecting these two pavings. A cell in the new paving is the intersection of the Schubert cell $I w I/I\cap Y_k$ corresponding to some $w\in W_+$ with the set $Z_k^{\sn\, o}$ for some $\sn\in\{0,1\}^n$. To complete the proof it sufficient to show that the equivariant homology of these intersections is concentrated in even degrees (see Proposition \ref{prop:homology 2}).
		
		Explicitly, the Schubert cell $I w I/I$  is identified with the unipotent Lie group generated by elements of the form $1+E_{i,j}$ for $1\leq i \leq n$, $i<j$, $w^{-1}_i>w^{-1}_j$, where $E_{i,j}$ is the matrix with $t^{\lceil \frac{j}{n}\rceil-1}$ at position $i,j\mod n$ and other entries zero. Let us parametrize the 
		points on $I w I/I$ as follows:
		\[
		v(\un):=\left(1 + \sum_{i,j} u_{i,j} E_{i,j}\right)^{-1} w I,
		\]
		where $u_{i,j}\in\C$ are coordinates. The condition that the element is preserved by $1+\gamma_k$ 
		results in a system of equations of the form
		\[
		u_{i,j} = \text{polynomial in $u_{i', j'}$ with $j'-i'<j-i$}
		\]
		for each pair $(i,j)$ satisfying $j>i+kn$. Thus to coordinatize the intersection $I w I/I \cap Y_k$ we can simply eliminate the variables $u_{i,j}$ with $j>i+kn$ using the obtained equations.
		
		Next we pick $r$, $1\leq r \leq n$ and analyze the condition that $v(\un)\in Z_{k}^r$. By definition, this means
		\[
		t^{-1} e_r \in \left(1 + \sum_{i,j} u_{i,j} E_{i,j}\right)^{-1} w \cO^n,
		\]
		equivalently
		\[
		\left(1 + \sum_{i,j} u_{i,j} E_{i,j}\right) e_r\in t \sum_{i=1}^n \cO e_{w_i}.
		\]
		The left hand side can be written as follows:
		\[
		e_r + \sum_{i,l} u_{i,r+l n} t^{l} e_i.
		\]
		Since $\cO\subset \cO e_{w_i}$, the terms with $l>0$ can be ignored. We see that the condition $v(\un)\in Z_{k}^r$ is equivalent to the following: for each $1\leq i\leq n$ satisfying $w_i\leq n$ we have $w_i\neq r$ and $u_{w_i,r}=0$. Recall that $u_{i,j}$ makes sense only when $i<j$ and $w^{-1}(i)>w^{-1}(j)$. We then have:
		
		$v(\un)\in Z_{k}^r$ if and only if $w^{-1}_r\leq 0$ and for each $1\leq i<r$ such that $w^{-1}_i\geq 1$ we have $u_{i,r}=0$. Denote the set of such indices $i$ by $A_r$.
		
		Notice that the condition only involves variables $u_{i,r}$ with $r\leq n$, and so it does not depend on the variables we eliminated when passing from $IwI/I$ to $IwI/I\cap Y_k$.
		
		It is now clear that the intersection of $IwI/I\cap Y_k$ with $Z_k^{\sn\, o}$ if not empty is always of the form
		\[
		U=\C^N \times \prod_{r:\,s_r=0,w^{-1}_r\leq 0} \C^{d_r}\setminus\{0\} = \BA\setminus \bigcup_r H_r
		\]
		for $d_r=|A_r|$. Here $\BA$ is an affine space and $H_r$ is a coordinate subspace of codimension $d_r$. The $T$-characters that appear in $\C^{d_r}$ correspond to differences of the form $x_i-x_r$ with $i\in A_r$, $i<r$.
		
		We now have an explicit description
		\[
		H^T_*(U) = \C[x_1,\ldots,x_n] / \left(\prod_{i\in A_r} (x_i-x_r) \;|\;s_r=0,w^{-1}_r\leq 0\right).
		\]
		This can be seen as follows: first identify $H_*^T(\BA)$ with the polynomial ring $\C[x_1,\ldots,x_n]$. Then
		the space $U$ is the complement of a subspace arrangement, and the homology of $H_r$ is the ideal generated by the polynomial $f_r=\prod_{i\in A_r} (x_i-x_r)$. The homology of an arbitrary intersection is the ideal generated by the corresponding product of polynomials. The polynomials $f_r$ clearly form a regular sequence: each subsequent polynomial has as a leading term which is a power of a variable that does not appear in previous polynomials. By Proposition  \ref{prop:regular sequence} we conclude that the homologies of $H_r$ generate a distributive lattice. By Proposition \ref{prop:homology 1} we conclude that $H_*^T(\bigcup_r H_r)$ is the ideal generated by all $f_r$, and from the long exact sequence we see that $H_*^T(U)$ is the quotient.
		
		So we have shown that the space $Z_k$ is paved by cells whose equivariant homology is supported in even degrees. Every set of the form $Z_k^\sn\cap V_P$ is a union of cells corresponding to a lower set, so Proposition \ref{prop:homology 2} completes the proof.
	\end{proof}

	\begin{rem}
		Combining Proposition \ref{prop:dist-lat} with Proposition \ref{prop:regular sequence} we obtain another proof of Corollary \ref{cor:yaction}.
	\end{rem}
	
	We now have a geometric
	description of $N_k$:
	\begin{defn}
		\label{def:uk}
		Let $U_{k}=\ypk-Z_k^1\cup \cdots \cup Z^n_{k}$, which
		is open in $Z_k$
	\end{defn}
	\begin{cor}
		\label{cor:nkmod}
		We have that
		\begin{equation}
			\label{eq:umod}
			H_*^T(U_k)=H_*^T(\ypk)/
			\left(H_*^T(Z^1_k)+\cdots+H_*^T(Z^n_k)\right) = N_k
		\end{equation}
		In particular, $H_i^T(U_k)=0$ for $i$ odd.
	\end{cor}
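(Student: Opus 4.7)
The plan is to derive the corollary from the long exact Borel--Moore homology sequence for the open--closed decomposition $\ypk = Z \sqcup U_k$, where $Z := Z^1_k \cup \cdots \cup Z^n_k$. The argument collapses this sequence into short exact sequences via a parity argument, and then identifies the resulting submodule with $(y_1,\ldots,y_n) H_*^T(\ypk)$.

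First, I would apply Proposition \ref{prop:dist-lat} (passing to the ind-limit over finite lower sets $P \subset W_+$) to identify the submodule $H_*^T(Z) \subset H_*^T(\ypk)$ with the sum $H_*^T(Z^1_k) + \cdots + H_*^T(Z^n_k)$. In particular, the natural map $H_*^T(Z) \to H_*^T(\ypk)$ is injective, since it is an inclusion of submodules. Each summand $H_*^T(Z^i_k)$ is concentrated in even degrees, because $Z^i_k$ is an $S_n$-translate of $\zk$ and hence paved by affines, so the same holds for the sum $H_*^T(Z)$. The homology $H_*^T(\ypk)$ is likewise supported in even degrees. Feeding this into
\[
\cdots \to H^T_i(Z) \to H^T_i(\ypk) \to H^T_i(U_k) \to H^T_{i-1}(Z) \to H^T_{i-1}(\ypk) \to \cdots,
\]
the odd-degree terms of $H^T_*(Z)$ and $H^T_*(\ypk)$ vanish, which forces $H^T_i(U_k)=0$ for odd $i$ (the vanishing claim of the corollary) and yields the short exact sequence
\[
0 \to H^T_i(Z) \to H^T_i(\ypk) \to H^T_i(U_k) \to 0
\]
in each even degree, hence $H_*^T(U_k) \cong H_*^T(\ypk)/H_*^T(Z)$.

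To conclude, Proposition \ref{prop:zpsinj}(\ref{prop:zpsinj3}) identifies the image of $H_*^T(Z^i_k)$ in $H_*^T(\ypk)$ with $y_i\, H_*^T(\ypk)$, so summing gives $H_*^T(Z) = (y_1,\ldots,y_n)\, H_*^T(\ypk)$, and the quotient is $N_k$ by definition. The only subtlety is the parity step for $H_*^T(Z)$, but this is immediate once Proposition \ref{prop:dist-lat} and the affine paving of each $Z^i_k$ are in hand, so no step presents a serious obstacle.
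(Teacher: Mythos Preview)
Your argument is correct and follows the same route as the paper: use the long exact sequence for the pair $(Z_k^1\cup\cdots\cup Z_k^n,\,U_k)$, invoke Proposition~\ref{prop:dist-lat} to identify $H_*^T$ of the union with the sum of the $H_*^T(Z_k^i)$ inside $H_*^T(\ypk)$ (hence the first map is injective), and conclude. You spell out the parity step and the identification with $N_k$ via Proposition~\ref{prop:zpsinj}\ref{prop:zpsinj3} more explicitly than the paper does, but the structure is identical; note only that in the sentence ``which forces $H_i^T(U_k)=0$ for odd $i$'' you are also using the injectivity of $H_{i-1}^T(Z)\to H_{i-1}^T(\ypk)$ already established, not parity alone.
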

	\begin{proof}
		We have the long exact sequence in equivariant
		Borel-Moore homology:
		\begin{equation}
			\label{eq:mvu}
			\cdots \rightarrow H^T_i(Z^1_k\cup \cdots \cup Z^n_k)
			\rightarrow H_i^T(\ypk)\rightarrow
			H^T_i(U_k)\rightarrow \cdots 
		\end{equation}
		Now apply Proposition \ref{prop:dist-lat} to expand the homology of the union
		as a sum, and also to see that the first map is 
		injective. 
	\end{proof}
	\begin{rem}
		It follows by the previous corollaries that
		\[H_*^T(U_k)=H_*^T(\ypk)\otimes_{\C[\yn]} \C\]
		where $\C$ is the $\C[\yn]$ module on which each 
		$y_i$ acts by zero.
	\end{rem}
	\begin{rem}
		Note that despite the vanishing of odd equivariant homology, 
		$U_k$ is not equivariantly formal, and in fact has odd
		nonequivariant homology. Indeed, equivariant formality would
		imply that $N_k\cong H^T_*(U_k)$ 
		is free over $\C[\xn]$, which it is not.
	\end{rem}
	
	\begin{rem}\label{rem:EM}
		Since the space $U_k$ is paved by spaces whose equivariant Borel-Moore homology is pure, the Hodge structure on $H_*^T(U_k)$ is also pure. Thus by \cite{franz2005weights} the Eilenberg-Moore spectral sequence 
		$\Torc{i}{S}{H^T_{-j}(U_k)}
		\Rightarrow H_{-j-i}^*(U_k)$ degenerates.
	\end{rem}
	
	\begin{rem}\label{rem:Leray}
		Since $H_*^T(Z_k^i)$ generate a distributive lattice, the following complex is a free resolution of $H_*^T(U_k)$ over $S$:
		\[
		\cdots\to \bigoplus_{i<j} H_*^T(Z_k^i\cap Z_k^j) \to \bigoplus_i H_*^T(Z_k^i) \to H_*^T(Z_k).
		\]
		Therefore $\Tor$ is computed by the complex
		\begin{equation}\label{eq:tor complex}
			\cdots\to \bigoplus_{i<j} H_*(Z_k^i\cap Z_k^j) \to \bigoplus_i H_*(Z_k^i) \to H_*(Z_k),
		\end{equation}
		since the intersections of $Z_k^i$ are equivariantly formal (Proposition \ref{prop:zpsinj} (i)). The complex \eqref{eq:tor complex} can now be viewed as the $E_1$ page of the spectral sequence converging to $H_*(U_k)$. Since the Borel-Moore homologies of the intersections of $Z_k^i$ are pure, this spectral sequence degenerates at the $E_2$ page.
	\end{rem}
	
	Using either Remark \ref{rem:EM} or Remark \ref{rem:Leray} we conclude with the following description of the weight filtration on $H_*(U_k)$:
	\begin{cor}\label{cor:weight U_k}
		We have
		\[
		\Gr_{i-*}^W H_{*}(U_k) = \Torc{i}{S}{H^T_*(U_k)}.
		\]
		where $\Gr_{i}^W$ denotes the associated graded for the weight filtration, i.e. $\Torc{0}{S}{H^T_*(U_k)}$ equals to the pure part of the homology, $\Torc{1}{S}{H^T_*(U_k)}$ equals to the part one degree off from pure and so on.
	\end{cor}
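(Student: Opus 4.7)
The plan is to deduce this corollary directly from the structure already set up in Remark \ref{rem:EM} (or alternatively Remark \ref{rem:Leray}) together with the purity of $H_*^T(U_k)$ and the result of Franz-Puppe \cite{franz2005weights}. Concretely, one invokes the Eilenberg-Moore spectral sequence
\[
E_2^{i,-j}=\Torc{i}{S}{H^T_{-j}(U_k)}\Longrightarrow H_{-j-i}(U_k),
\]
which exists for any $T$-space with finitely generated equivariant cohomology.

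First, I would argue that the spectral sequence degenerates at $E_2$. This is the content of \cite{franz2005weights}: if the mixed Hodge structure on $H_*^T(U_k)$ is pure, in the sense that $H_*^T(U_k)$ lives in the top weight piece of the equivariant homology, then the differentials on $E_r$ for $r\ge 2$ must vanish on weight grounds, and moreover the induced filtration on the abutment coincides up to a shift with the weight filtration. The purity hypothesis is verified in Remark \ref{rem:EM} using Proposition \ref{prop:dist-lat}: $U_k$ is paved by the pieces constructed in the proof of that proposition, each of which has pure equivariant Borel-Moore homology, so additivity of weights on a pavement gives purity of $H_*^T(U_k)$. (Alternatively one can run the same argument with the complex \eqref{eq:tor complex} of Remark \ref{rem:Leray}, noting that each intersection $Z_k^{i_1}\cap\cdots\cap Z_k^{i_r}$ is paved by Schubert-type pieces and hence pure.)

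Second, I would match gradings. The standard convention in \cite{franz2005weights} is that the $i$-th column of the $E_\infty$ page, $\Tor^S_i(H^T_*(U_k),\C)$, contributes exactly to the weight piece that is $i$ steps below the top (``pure'') weight, with an internal grading shift by the homological degree $-*$ coming from $H^T_{-*}(U_k)$. Collecting this on each diagonal $-j-i=\text{const}$ gives precisely
\[
\Gr^W_{i-*}H_{*}(U_k)\;=\;\Torc{i}{S}{H^T_*(U_k)},
\]
as stated.

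The only place where there is anything to watch out for is bookkeeping of signs and degree shifts between the homological grading used throughout the paper (with $x_i$ in positive degree and homological degree negative) and the conventions of \cite{franz2005weights}, which are phrased cohomologically; this is a matter of translation rather than substance. All geometric input (purity, finite generation, the distributive-lattice property needed to guarantee that the cellular pavement really does compute $H_*^T(U_k)$) has already been established in Proposition \ref{prop:dist-lat} and Corollary \ref{cor:nkmod}, so no further work is required beyond citing \cite{franz2005weights} and tracking the indexing.
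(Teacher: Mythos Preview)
Your proposal is correct and follows essentially the same approach as the paper: the corollary is stated immediately after Remarks~\ref{rem:EM} and~\ref{rem:Leray} with the sentence ``Using either Remark~\ref{rem:EM} or Remark~\ref{rem:Leray} we conclude\ldots,'' and those remarks contain precisely the Eilenberg--Moore/Leray degeneration argument via purity and \cite{franz2005weights} that you have spelled out.
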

	
	\subsection{Open Hessenberg varieties}
	We now study the intersection of the Hessenberg
	paving of Section \ref{sec:hesspav} with $U_k$.
	We have a subvariety
	$Z^{\sn}_{\pi,l}=Z^\sn_{n,l} \cap \rsh_\pi$,
	where
	\begin{equation}
		Z^\sn_{n,l}=\left\{V_1\subset\cdots \subset V_n:
		s_i=1\Rightarrow e_i \in V_{n-l}\right\}
	\end{equation}
	and the intersection is taken in the usual
	flag variety $\Fl_n$, with a similar definition for
	$Z^i_{\pi,l}$ where $\sn$ has a one in position
	$i$ as above.
	We consider the complementary variety to
	the $Z^i_{\pi,l}$:
	\begin{defn}
		\label{def:upil}
		Let $\pi$ be a Dyck path, and $0\leq l \leq n$. 
		Let $U_{\pi,l}=U_{n,l} \cap \rsh_\pi$, where
		\begin{equation}
			\label{eq:urshdef}
			U_{n,l}\cong\left\{V_1\subset \cdots \subset V_n:
			\mbox{$e_i \notin V_{n-l}$ for any $i$}\right\}
		\end{equation}
		which is the complement of the $Z^i_{\pi,l}$ in $\rsh_\pi$.
	\end{defn}
	\begin{rem}
		We may also replace the definition of $U_{n,l}$ by
		taking orthogonal complements in $\Fl_n$:
		\[\left\{V_1\subset \cdots \subset V_n:
		\mbox{$V_l \not \subset H_i$ for any $i$}\right\},\]
		where $H_i$ is the \emph{axis hyperplane} 
		perpendicular to $e_i$, and the isomorphism
		is induced by taking orthogonal complements.
		This may be more natural since it generalizes
		the construction of hyperplane complements in 
		$\mathbb{CP}^n$. In this case the cohomology ring
		$H_T^*(U_{l})$ generalizes the (equivariant)
		Orlik-Solomon algebra.
	\end{rem}
	
	Let $P=Q \cup S_n w$ as in Section 
	\ref{sec:hesspav}, but now supposing that $P,Q\subset W_+$,
	and let $\pi=\pi_k(S_nw)$. 
	\begin{prop}
		\label{eq:uhessprop}
		We have that $U_k \cap (V_P-V_Q)$ is the restriction
		of $E_{w} \cap Y_k$ as an affine bundle over $\rsh_\pi$ to $U_{\pi,l}$,
		where $l$ is the number of indices $i$ with $w_i\in \{1,...,n\}$.
	\end{prop}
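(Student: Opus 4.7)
The plan is to parametrize the affine bundle $E_w\cap Y_k\to\rsh_\pi$ via $g(z)w_-I\mapsto g(0)B$ from Proposition~\ref{prop:asptohess}, then to translate the defining condition $z^{-1}\C[[z]]e_i\subset V_n$ of each closed subvariety $Z^i_k$ into a condition on $g(0)B\in\rsh_\pi$ matching the defining condition of $Z^i_{\pi,l}$; the proposition follows by taking the union over $i$ and passing to complements.

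First I would translate the condition. Writing $V_n=g(z)w_-\mathcal{O}^n$ and using $e_{i+n}=z^{-1}e_i$, the condition $z^{-1}\C[[z]]e_i\subset V_n$ becomes $z^{-1}g(z)^{-1}e_i\in w_-\mathcal{O}^n$. Decomposing $w_-\mathcal{O}^n=\bigoplus_{r=1}^n z^{-d_r}\mathcal{O}\,e_r$, where $d_r\geq 0$ is the integer such that $r+d_r n$ is the unique element of the window $\{w_{-,1},\ldots,w_{-,n}\}$ in residue class $r\pmod n$, this splits componentwise: components with $d_r\geq 1$ are automatic, while those with $d_r=0$ give the equations $(g(0)^{-1})_{r,i}=0$ for all $r\in L$, where
\[
L:=\{1,\ldots,n\}\cap\{w_{-,1},\ldots,w_{-,n}\}.
\]
Since $|L|=l$ equals the number of indices with $w_i\in\{1,\ldots,n\}$ (a statistic which depends only on the $S_n$-coset of $w$), these equations are equivalent to $e_i\in g(0)\operatorname{span}(e_r:r\notin L)$.

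Next I would match this subspace with a step of the standard flag on $\Fl_n$. Minimality of $w_-$ in its left $S_n$-coset forces the window of $w_-$ to be sorted in increasing order, and this in turn forces $L=\{n-l+1,\ldots,n\}$: the unshifted entries $w_{-,k}\leq n$ must occupy the first $l$ positions, and a direct inversion count, varying the residue permutation while keeping the multiset of shifts fixed, shows that the representative of smallest length is precisely the one in which these unshifted residues are the $l$ largest elements of $\{1,\ldots,n\}$. Consequently $\operatorname{span}(e_r:r\notin L)=\operatorname{span}(e_1,\ldots,e_{n-l})$ is the $(n-l)$-th subspace of the standard flag on $\Fl_n$, and the condition becomes $e_i\in V_{n-l}$ for the flag $g(0)B$, which is exactly the defining condition of $Z^i_{n,l}\cap\rsh_\pi=Z^i_{\pi,l}$.

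Taking the union over $i\in\{1,\ldots,n\}$ and complementing, $U_k\cap(V_P-V_Q)=(E_w\cap Y_k)\setminus\bigcup_{i=1}^n(Z^i_k\cap E_w\cap Y_k)$ is identified with the preimage under the bundle map of $\rsh_\pi\setminus\bigcup_i Z^i_{\pi,l}=U_{\pi,l}$, and hence equals the restriction of the affine bundle $E_w\cap Y_k\to\rsh_\pi$ to $U_{\pi,l}$, as claimed. The main obstacle I anticipate is the combinatorial identification of $L$ with $\{n-l+1,\ldots,n\}$ for the minimal representative $w_-$; while this follows in examples from a straightforward sortedness-plus-minimization argument, a clean general justification requires a careful use of the standard description of minimal coset representatives in $S_n\backslash W_+$.
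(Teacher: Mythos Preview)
Your overall strategy---translate the lattice condition $z^{-1}\mathcal{O}e_i\subset V_n$ into an equation on $g(0)$ and match it against the flag condition defining $Z^i_{\pi,l}$---is correct and leads to a valid proof. The computation showing that the condition becomes $(g(0)^{-1})_{r,i}=0$ for $r\in L=\{1,\ldots,n\}\cap\{w_{-,1},\ldots,w_{-,n}\}$ is right, and the conclusion $L=\{n-l+1,\ldots,n\}$ is also right.

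However, your justification of this last identification contains an error. You assert that minimality of $w_-$ in $S_n w$ forces the window $(w_{-,1},\ldots,w_{-,n})$ to be increasing; this is false. That condition characterizes minimal representatives of \emph{right} cosets $wS_n$. For left cosets $S_n w$, the correct characterization is
\[
w_-^{-1}(1)<w_-^{-1}(2)<\cdots<w_-^{-1}(n),
\]
since $s_iw_->w_-$ is equivalent to $w_-^{-1}(i)<w_-^{-1}(i+1)$. (In the paper's own example $w_-=(2,4,3)$ the window is not sorted, and the unshifted entries $2,3$ sit in positions $1,3$, not the first two positions.) Once you use the correct characterization the claim $L=\{n-l+1,\ldots,n\}$ is immediate: since $w_-\in W_+$ we have $w_-^{-1}(r)\leq n$ for $r\in\{1,\ldots,n\}$, and $r\in L$ iff $w_-^{-1}(r)\in\{1,\ldots,n\}$; monotonicity then forces the $l$ values of $r$ with $w_-^{-1}(r)\geq 1$ to be the largest ones. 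So the ``obstacle'' you anticipate is in fact a one-line argument, provided you use the right coset description.

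For comparison, the paper's proof avoids this coordinate computation entirely. It argues on the closed pieces: it suffices that $Z^{\sn}_k\cap(V_P-V_Q)$ restricts to $Z^{\sn}_{\pi,l}$ for each $\sn$, and by the left $S_n$-symmetry one may take $\sn=(1^m0^{n-m})$, in which case $Z^{\sn}_{\pi,l}$ is a union of Hessenberg--Schubert cells and the identification follows directly from the cell-by-cell description in Proposition~\ref{prop:asptohess}. Your direct computation gives more explicit information (and is closer in spirit to the argument in the proof of Proposition~\ref{prop:dist-lat}), while the paper's symmetry reduction is shorter and sidesteps the combinatorics of $w_-$ altogether.
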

	
	\begin{proof}
		We have that 
		$U_{\pi,l}=\rsh_\pi-Z^1_{\pi,l}\cup\cdots \cup Z^n_{\pi,l}$, 
		so it suffices to check that $Z^\sn_k\cap V_P-Z^\sn_k\cap V_Q$ is the restriction of
		$E_{w} \cap Y_k$ to $Z^{\sn}_\pi$ for any $\sn$.
		It suffices to consider
		$\sn=(1^m0^{n-m})$, in which $Z^{\sn}_{\pi,l}$ is a union
		of Hessenberg-Schubert cells. This now follows from
		the description of the Schubert cells in Proposition \ref{prop:asptohess}.
		
	\end{proof}
	
	One can ask if all possible pairs $(\pi,l)$ can appear in
	the above proposition. It is not hard to 
	see that the number of indices
	with $w_i \in \{1,...,n\}$, which is an invariant of
	the coset $S_n w$, is at most the number of
	trailing East steps in $\pi_k(S_n w)$. In terms of
	labels $[\mn,\an]$, this number is the same as the
	number of zeroes in $\mn$, and will be denoted
	$z(\mn)=z(S_nw)$ for $w=\paff(\mn,\an)$. Thus, the
	data of possible $(\pi,l)$ is the same as that of a 
	partial Dyck path, discussed in Section \ref{sec:affperm}.
	We will write $\pi'=(\pi,l)$ for a partial Dyck path
	and write $U_{\pi'}=U_{\pi,l}$. We will also use the
	notation $\pi'_k(S_n w)=(\pi_k(S_n w),z(S_n w))$, and similarly
	for $\pi'_k(\mn,\an)$.

	We have an explicit description of the equivariant
	cohomology $H^*_T(U_{\pi,l})$, which agrees with Borel-Moore homology since $U_{\pi,l}$ is smooth. 
	Let $M_\pi=H^*_T(\rsh_\pi)$,
	which is determined by the GKM relations. Let
	\[A^{\sn}_{n,l}=\left\{\sigma\in S_n:
	s_i=1\Rightarrow \sigma^{-1}_i\leq n-l\right\}\]
	be the subset of fixed points of $Z^\sn_{\pi,l}$,
	and similarly for $A^i_{n,l}$ and $Z^i_{n,l}$.
	We define
	\begin{equation}
		\label{eq:upil}
		N_{\pi,l}=M_\pi/\left( F_{A^1_{n,l}} M_{\pi}+\cdots+F_{A^n_{n,l}} M_{\pi}\right),
	\end{equation}
	where $F_A$ denotes the space of elements supported at the fixed points in $A$.
	\begin{prop}
		\label{prop:upil homology}
		We have that $H^*_T(U_{\pi'})\cong N_{\pi'}$.
	\end{prop}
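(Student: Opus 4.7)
The strategy is to transport the argument used for $U_k$ (Corollary \ref{cor:nkmod}) to the Hessenberg setting, replacing the role of Proposition \ref{prop:dist-lat} with its analogue for the collection $\{Z^{\sn}_{\pi,l}\}$ inside $\rsh_\pi$. Since $\rsh_\pi$ is smooth and $U_{\pi,l}$ is open in it, equivariant cohomology and equivariant Borel-Moore homology agree on $U_{\pi,l}$, so it suffices to compute the Borel-Moore side.

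First I would establish the distributive lattice property: each $Z^{\sn}_{\pi,l}$ is a GKM subvariety of $\rsh_\pi$ (when $\sn$ has all $1$'s on the left this is a union of Hessenberg-Schubert cells, and the general case follows by applying a left $S_n$ translation, which is a geometric symmetry since $U_{n,l}$ is $S_n$-equivariant only up to permuting axis hyperplanes, but the combinatorics of fixed points is controlled explicitly). Then I would repeat the cellular analysis of the proof of Proposition \ref{prop:dist-lat}, intersecting the Bruhat paving of $\rsh_\pi$ with the $2^n$ open strata $Z_{\pi,l}^{\sn\,o}$. The key point is that each such intersection, viewed inside a Schubert cell of $\rsh_\pi$, is again the complement of a coordinate subspace arrangement whose defining linear forms form a regular sequence, so by Propositions \ref{prop:homology 1}, \ref{prop:homology 2}, and \ref{prop:regular sequence} its equivariant Borel-Moore homology is supported in even degrees and the $H^T_*(Z^i_{\pi,l})$ generate a distributive lattice inside $H^T_*(\rsh_\pi) = M_\pi$.

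Second, I would invoke the long exact sequence
\[
\cdots \to H^T_i(Z^1_{\pi,l}\cup \cdots \cup Z^n_{\pi,l}) \to H^T_i(\rsh_\pi) \to H^T_i(U_{\pi,l}) \to \cdots
\]
Even-concentration plus the distributive lattice property makes this split into short exact sequences, yielding
\[
H^T_*(U_{\pi,l}) \;\cong\; M_\pi \Big/ \left(H^T_*(Z^1_{\pi,l}) + \cdots + H^T_*(Z^n_{\pi,l})\right).
\]
Finally, via fixed-point localization, each $H^T_*(Z^i_{\pi,l})$ is identified with $F_{A^i_{n,l}} M_\pi$: the fixed points of $Z^i_{\pi,l}$ are precisely $A^i_{n,l}$, and since $Z^i_{\pi,l}$ is GKM with homology basis given by fundamental classes of its Schubert-type cells, its image in $M_\pi$ is the $S$-submodule of classes supported on those fixed points. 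Combining smoothness of $U_{\pi,l}$ to swap Borel-Moore homology for cohomology yields $H^*_T(U_{\pi'}) \cong N_{\pi'}$.

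The main obstacle is the distributive lattice step, specifically verifying that the stratified intersection of Schubert cells of $\rsh_\pi$ with the $Z^{\sn\,o}_{\pi,l}$ is again a complement of a coordinate arrangement with a regular-sequence defining set. This is the Hessenberg analogue of the explicit cell-by-cell computation carried out in the proof of Proposition \ref{prop:dist-lat}; the Hessenberg condition restricts which coordinates $u_{i,j}$ survive in a Schubert cell, but the condition $v(\un) \in Z^r_{n,l}$ still only involves the $u_{i,r}$ with $r \leq n$, so the same regular-sequence structure persists, and the rest of the argument goes through mutatis mutandis.
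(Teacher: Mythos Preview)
Your proposal is correct and follows essentially the same approach as the paper: both establish that the $Z^i_{\pi,l}$ satisfy the distributive lattice property of Proposition~\ref{prop:dist-lat}, then run the long-exact-sequence argument of Corollary~\ref{cor:nkmod}, and finally identify the image of $H^T_*(Z^i_{\pi,l})$ in $M_\pi$ with $F_{A^i_{n,l}} M_\pi$. The paper's proof is more compressed---it simply asserts the lattice property rather than re-running the cell-by-cell analysis---and it explicitly frames the identification of $H^T_*(Z^i_{\pi,l})$ with $F_{A^i_{n,l}} M_\pi$ as going through Poincar\'e duality (\eqref{eq:rshpd}), which is the point you handle somewhat implicitly at the end when you invoke smoothness.
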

	
	\begin{proof}
		Since the varieties are $S_n$ rotations
		of Schubert varieties, we have that
		$F_{A_{i,l}} M_{\pi}$ is the image of $H_*^T(Z^i_{\pi,l})$
		under Poincar\'{e} duality. Then we proceed in the
		same way as Corollary \ref{cor:nkmod}, noting
		that $Z^i_{\pi,l}$ also satisfy 
		the lattice property of Proposition \ref{prop:dist-lat}.
	\end{proof}
	
	Remarks \ref{rem:EM} and \ref{rem:Leray} can be repeated for the spaces $U_{\pi'}$ and we obtain the following analogue of Corollary \ref{cor:weight U_k}, where we can replace Borel-Moore homology with cohomology since $U_{\pi'}$ is smooth:
	\begin{cor}\label{cor:weight U pi}
		We have
		\[\Gr_{i+*}^W H^*(U_{\pi'}) = \Torc{i}{S}{H^*_T(U_{\pi'})},\]
		i.e. $\Tor_0$ equals to the pure part of the cohomology, $\Tor_1$ equals to the part one degree off from pure and so on.
	\end{cor}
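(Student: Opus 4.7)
The plan is to mirror the proof of Corollary \ref{cor:weight U_k} almost verbatim, making the small adjustments needed to pass from Borel-Moore homology of $U_k$ to cohomology of $U_{\pi'}$ (which is legitimate because $U_{\pi'}$ is smooth, as noted in the introduction). The two ingredients used there were purity of the equivariant (co)homology of the relevant space and a free resolution coming from the distributive lattice property; both have direct analogues in the Hessenberg setting.

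First I would establish the purity of $H^*_T(U_{\pi'})$. The variety $\rsh_\pi$ is smooth and paved by affine cells $\Omega_\sigma\cap\rsh_\pi$ indexed by $S_n$ (Section \ref{sec:gkmhess}). The closed subvarieties $Z^i_{\pi,l}$ are $S_n$-translates of unions of these cells, so $U_{\pi'}$ is paved by intersections of cells with the open complement $U_{n,l}$. Exactly as in the proof of Proposition \ref{prop:dist-lat}, each such piece is an affine space with a coordinate subspace arrangement removed, and a direct computation as there shows its equivariant cohomology is pure and concentrated in even degrees. Consequently $H^*_T(U_{\pi'})$ is pure.

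Next I would produce the resolution. From Proposition \ref{prop:upil homology} and its proof, the submodules $F_{A^i_{n,l}}M_\pi\subset M_\pi$ corresponding to $H^*_T(Z^i_{\pi,l})$ generate a distributive lattice inside $H^*_T(\rsh_\pi)$, so the Koszul-type complex
\[
\cdots\to\bigoplus_{i<j}H^*_T(Z^i_{\pi,l}\cap Z^j_{\pi,l})\to\bigoplus_i H^*_T(Z^i_{\pi,l})\to H^*_T(\rsh_\pi)
\]
is a free $S$-resolution of $H^*_T(U_{\pi'})$, exactly as in Remark \ref{rem:Leray}. All intersections $Z^{\sn}_{\pi,l}$ are themselves smooth Hessenberg-type varieties paved by affines, hence equivariantly formal and pure, so tensoring with $\C$ over $S$ gives a complex whose terms compute $\Tor^S_i(H^*_T(U_{\pi'}),\C)$ and which is the $E_1$-page of the spectral sequence converging to $H^*(U_{\pi'})$.

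Finally, since every term of that spectral sequence is pure, it degenerates at $E_2$ (equivalently, one may invoke Franz \cite{franz2005weights} applied to the pure equivariantly formal space $U_{\pi'}$, giving the Eilenberg--Moore degeneration $\Tor^S_i(H^*_T(U_{\pi'}),\C)\Rightarrow H^*(U_{\pi'})$). Comparing weights then identifies $\Tor^S_i$ with $\Gr^W_{i+*}H^*(U_{\pi'})$, yielding the claim. The only real point of attention is checking that the purity argument of Proposition \ref{prop:dist-lat} transports to the Hessenberg pieces and to all their pairwise intersections $Z^{\sn}_{\pi,l}$; this is the step that I expect to require the most care, but it follows by the same explicit description of cells as affine-minus-coordinate-subspaces used in the proof there.
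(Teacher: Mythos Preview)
Your proposal is correct and follows essentially the same route as the paper: the paper's proof is simply the one-line observation that Remarks~\ref{rem:EM} and~\ref{rem:Leray} carry over verbatim to $U_{\pi'}$, together with the smoothness of $U_{\pi'}$ to pass between Borel--Moore homology and cohomology. You have unpacked exactly those two remarks in the Hessenberg setting.

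One small slip: in your parenthetical alternative you refer to $U_{\pi'}$ as an ``equivariantly formal'' space. It is not---just as the paper notes for $U_k$, equivariant formality would force $H^*_T(U_{\pi'})=N_{\pi'}$ to be free over $S$, which fails in general (this is precisely why the $\Tor_i$ for $i>0$ are nonzero and the weight filtration is nontrivial). The hypothesis in Franz's result \cite{franz2005weights} that you actually need, and that you correctly establish in your first paragraph, is \emph{purity} of $H^*_T(U_{\pi'})$, not equivariant formality. With that word corrected, both of your arguments (the Leray-type resolution and the Eilenberg--Moore degeneration) go through and match Remarks~\ref{rem:Leray} and~\ref{rem:EM} respectively.
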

	
	\subsection{Conjectures about Tor groups}
	
	\label{sec:torconj}
	
	We now present some conjectures that
	would categorify the nabla positivity conjecture.
	
	\begin{defn}
		Let $N$ be a module over $S=\C[\xn]$ with a left action of $S_n$,
		which intertwines the action on $S$.
		We will say that $N$ ``satisfies the Tor property'' if the multiplicity of
		the irreducible representation $\chi_\lambda$ in
		\[S_n\acts \Torc{i}{S}{N}\]
		is zero unless $i=\iota(\lambda)$, the number of boxes in $\lambda$ below the main diagonal (see Conjecture \ref{conj:nabpos}).
	\end{defn}

	We now connect these modules to the nabla positivity conjecture, 
	which is 
	Conjecture \ref{conj:nabpos}.
	\begin{conjecture}
		\label{conj:tor}
		Both $H^T_*(U_k)$ and $H_*^T(U_{\pi'})$ satisfy the 
		Tor property as modules over $S$.
	\end{conjecture}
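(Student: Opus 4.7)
The strategy is to concentrate on the Tor property for $H_*^T(U_{\pi'})$, as the reduction to this case is the content of Theorem B (``Conjecture A$'$ implies Conjecture A''), which follows geometrically from the Hessenberg paving of $U_k$ by affine bundles over the $U_{\pi,l}$ (Proposition~\ref{eq:uhessprop}) together with additivity of Tor computations across the short exact sequences \eqref{eq:longshort}. More precisely, an $S_n$-equivariant filtration of $H^T_*(U_k)$ whose associated graded is a sum of shifts of the $H^T_*(U_{\pi,l})$ induces a spectral sequence of Tor groups that respects isotypic components, so per-$\lambda$ concentration in a single Tor degree transfers from the pieces to the whole.

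For each fixed partial Dyck path $\pi'=(\pi,l)$, the idea is to exploit the concrete presentation $N_{\pi,l}=M_\pi/\sum_i F_{A_{n,l}^i} M_\pi$ from Proposition~\ref{prop:upil homology}. I would build an $S_n$-equivariant free resolution of $N_{\pi,l}$ over $S$ as the total complex of a bicomplex: the horizontal direction is a \v Cech-type complex indexed by subsets $T\subseteq\{1,\ldots,n\}$ that assembles the contributions of the mutual intersections of the ``hyperplane'' subvarieties $Z^i_{\pi,l}$, and the vertical direction is a minimal free $S$-resolution of the equivariant cohomology of each such intersection. The distributive lattice property (Proposition~\ref{prop:dist-lat}), transferred from $Z_k$ to $\rsh_\pi$ via the Hessenberg paving of Proposition~\ref{prop:asptohess}, ensures that this bicomplex is well-behaved and that its total complex genuinely computes $\Tor^S_*(N_{\pi,l},\C)$.

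Identifying which irreducible $\chi_\lambda$ appears in which Tor degree would then be extracted from the explicit Frobenius character formula of Proposition~\ref{prop:nabchi} for $\chi_{\pi,l}$. The plethystic identity \eqref{eq:plethysm} gives
\[
\sum_i (-1)^i \frob\Tor^S_i(N_{\pi,l},\C) \eq \chi_{\pi,l}[X(1-q)],
\]
so the Tor property is equivalent to the Schur expansion of $\chi_{\pi,l}[X(1-q)]$ having the sign pattern $(-1)^{\iota(\lambda)}$ times a polynomial in $\Z_{\geq 0}[q]$. Via Corollary~\ref{cor:weight U pi}, this is also equivalent to purity-type control on the weight filtration of $H^*(U_{\pi'})$, which one can hope to exploit using the pure paving of $U_{\pi'}$ by the finer cells constructed in the proof of Proposition~\ref{prop:dist-lat}.

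The hard part is twofold. First, one must obtain the precise combinatorial Schur expansion of $\chi_{\pi,l}$ exhibiting this sign pattern; the authors indicate that such a formula is forthcoming and would simultaneously imply the Loehr--Warrington formula \cite{loehr2008nested}, so this is expected to require substantial bijective combinatorics over partial Dyck paths. Second, one must match this combinatorial structure with the $S_n$-equivariant decomposition of the Tor groups extracted from the bicomplex above; the subtlety is that a plethystic-character calculation alone only yields a virtual count of multiplicities, and upgrading it to concentration in a single Tor degree requires genuine geometric or representation-theoretic input. A promising alternative route is to construct, for each $\lambda$, explicit classes in $\Tor^S_{\iota(\lambda)}(N_{\pi,l},\C)$ of isotype $\chi_\lambda$ (for instance via higher Specht-type polynomials adapted to the Hessenberg unit-interval order determined by $\pi$), and then use the Frobenius character count to verify that no other multiplicities can occur.
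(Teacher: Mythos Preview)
The statement you are attempting to prove is \emph{Conjecture}~\ref{conj:tor}, and the paper does not prove it. It is explicitly left open, supported only by Gr\"obner basis computations in SAGE and MAPLE; the paper's contribution is Theorem~\ref{thm:torimpliesnp}, which shows that the conjecture \emph{implies} nabla positivity, and that the $U_{\pi'}$ case implies the $U_k$ case. There is therefore no ``paper's own proof'' to compare against.

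Your proposal is not a proof but a strategy outline, and you are honest about this: the two items you flag as ``the hard part'' are precisely the obstructions, and neither is overcome. A few specific points. First, the reduction from $U_k$ to $U_{\pi'}$ that you sketch in your opening paragraph is exactly what the paper proves in the second half of Theorem~\ref{thm:torimpliesnp}, so that part is fine but is not new. Second, and more seriously, the combinatorial signed Schur positivity of $\chi_{\pi,l}[X(1-q)]$ that you hope to extract from Proposition~\ref{prop:nabchi} is \emph{not} equivalent to the Tor property: it is only the Euler-characteristic shadow of it. Signed positivity says that for each $\lambda$ the alternating sum $\sum_i (-1)^i [\chi_\lambda : \Tor_i]$ has sign $(-1)^{\iota(\lambda)}$; the Tor property says that all but one of the summands actually vanishes. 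You note this gap yourself, but the ``explicit classes via higher Specht-type polynomials'' suggestion is speculative and would need a matching upper bound on the total dimension of $\bigoplus_i \Tor_i$ in each isotype, which nothing in your outline provides. Third, the forthcoming combinatorial formula the authors allude to is stated to establish signed positivity in the \emph{monomial} basis (hence of $\nabla^k s_\lambda$), not the full Tor vanishing; so even granting that result, Conjecture~\ref{conj:tor} would remain open.

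In short: there is no gap to diagnose in a nonexistent proof, and your outline, while reasonable as a roadmap, does not close the conjecture.
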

	Note that we do not conjecture this property for $U_{\pi,l}$
	when $l$ is greater than the number of trailing East steps,
	in which case the conjecture does not appear to hold.
	\begin{thm}
		\label{thm:torimpliesnp}
		The Tor property for $H^T_*(U_k)$ implies
		nabla positivity, Conjecture \ref{conj:nabpos}.
		The Tor property for $H^T_*(U_{\pi'})$ implies the
		Tor property for $H^T_*(U_k)$. 
	\end{thm}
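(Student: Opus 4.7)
The plan is to handle the two implications separately. For the first, I would start from the isomorphism $\Tor^R_i(H^T_*(Z_k),\C) \cong \Tor^S_i(H^T_*(U_k),\C)$, which holds because $H^T_*(Z_k)$ is free over $\C[\yn]$ (Corollary \ref{cor:yaction}). The dot action intertwines $S$, so applying the plethystic substitution formula \eqref{eq:plethysm} with $Y$ as the dot-action alphabet and using Corollary \ref{cor:frobquot} to compute $\frob_{Y,X}(H^T_*(U_k))$ produces
\[
\sum_{i \geq 0} (-1)^i \frob_{Y,X} \Tor^S_i(H^T_*(U_k),\C) \;=\; q^{-k\binom{n}{2}} \omega_X \nabla^k e_n[XY],
\]
since the $(1-q)$ introduced by the plethystic substitution cancels the $(1-q)$ in the denominator of $\frob_{Y,X}(H^T_*(U_k))$.

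Next I would expand the right hand side using the dual Cauchy identity $e_n[XY] = \sum_{\lambda \vdash n} s_\lambda(X) s_{\lambda'}(Y)$, which gives
\[
\omega_X \nabla^k e_n[XY] \;=\; \sum_\mu \omega(\nabla^k s_{\mu'})(X) \cdot s_\mu(Y),
\]
so that the coefficient of $s_\mu(Y) s_\nu(X)$ equals $(\nabla^k s_{\mu'}, s_{\nu'})$. The Tor property for $H^T_*(U_k)$ collapses the alternating sum on the left to $(-1)^{\iota(\mu)}$ times the $\chi_\mu$-isotypic component of $\Tor^S_{\iota(\mu)}$, whose Frobenius character in $X$ is an honest nonnegative combination of Schur functions. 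Matching coefficients of $s_\nu(X)$ and setting $\rho = \nu'$ yields
\[
(-1)^{\iota(\mu)} (\nabla^k s_{\mu'}, s_\rho)(q,t) \;=\; q^{k\binom{n}{2}} \bigl[\text{bigraded mult.\ of $\chi_\mu \otimes \chi_{\rho'}$ in } \Tor^S_{\iota(\mu)}\bigr] \;\in\; \Z_{\geq 0}[q,t],
\]
which is precisely Conjecture \ref{conj:nabpos}. As a sanity check, the small case $n=2$, $k=1$ has $\nabla s_{(2)} = -qt\, s_{(1,1)}$ and $\nabla s_{(1,1)} = s_{(2)} + (q+t) s_{(1,1)}$, and the predicted signs $(-1)^{\iota(\mu)}$ correctly make $(-1)^{\iota((2))}(\nabla s_{(1,1)}, s_\rho)$ and $(-1)^{\iota((1,1))}(\nabla s_{(2)}, s_\rho)$ nonnegative.

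For the second implication, I would use the Hessenberg paving of Proposition \ref{eq:uhessprop}: $U_k$ is stratified by locally closed pieces $U_k \cap (V_P - V_Q)$, each an equivariant affine bundle over some $U_{\pi',l}$. Equivariant affine bundles preserve homological Tor degrees (shifting only the internal bigrading and the $S_n$ twists), so the Tor property for $U_{\pi',l}$ transfers to each stratum. Because all strata have even-degree equivariant Borel--Moore homology (by the same argument as in Corollary \ref{cor:nkmod}), the long exact sequences attached to the filtration of $U_k$ by unions of strata split into short exact sequences. Applying $\Tor^S(-,\C)$ and then taking $\chi_\mu$-isotypic components (an exact functor) yields a long exact sequence in which, inductively, both the running union and the newly added stratum have $\chi_\mu$-component supported only in degree $i = \iota(\mu)$; the flanking zeros at $i = \iota(\mu) \pm 1$ collapse the long exact sequence to a short exact sequence at $i = \iota(\mu)$, establishing the Tor property for the union and completing the induction.

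The main delicate point is part one, specifically tracking the transposes introduced by the dual Cauchy identity and the $\omega_X$ so as to recover the exact sign $(-1)^{\iota(\mu)}$ on $\nabla^k s_{\mu'}$ demanded by Conjecture \ref{conj:nabpos}. Once this bookkeeping is correct, part two is essentially a formal consequence of the even-degree paving together with the exactness of the isotypic decomposition, and I expect it to go through routinely given the infrastructure already in place.
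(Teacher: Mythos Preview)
Your proposal is correct and follows essentially the same approach as the paper's proof: for the first implication you apply the plethystic substitution $Y\mapsto Y(1-q)$ to $\frob_{Y,X}(N_k)$, expand via the dual Cauchy identity, and read off the sign from the Tor property; for the second you use the short exact sequences coming from the Hessenberg paving, pass to the long exact sequence in $\Tor^S(-,\C)$, take $\chi_\mu$-isotypic components, and induct on the lower set $P$. Your treatment is somewhat more detailed on the bookkeeping of transposes and on why the affine bundle does not disturb the Tor degrees, but the skeleton of the argument is the same as the paper's.
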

	\begin{proof}

		For the first statement, we apply the plethystic substitution 
		$Y\mapsto Y(1-q)$ to both sides
		of $\eqref{eq:frobquot}$ to get
		\[\sum_{i\geq 0} (-1)^i\frob_{Y,X} \Torc{i}{S}{N_k}=
		\omega_X \nabla^k e_n[XY]=\]
		\begin{equation}
			\label{eq:torimpliesnp}
			\sum_{\lambda} \omega_X \nabla^k s_{\lambda'}(X) s_{\lambda}(Y)
			=\sum_{\lambda,\mu} 
			c_{\lambda',\mu'}(q,t) s_\mu(X)s_\lambda(Y).
		\end{equation}
		It follows that $c_{\lambda',\mu}(q,t)$ is signed positive,
		since only $i=\iota(\lambda)$ contributes in \eqref{eq:torimpliesnp}.
		
		For the second statement, let $P=Q\cup S_n w\subset \awg_+$ be lower sets as above. 
		Then since \eqref{eq:mvu} is short exact
		by Corollary \ref{cor:nkmod},
		we have the long exact sequence
		\[\cdots \rightarrow 
		\Torc{i}{S}{H_*^T(U_k\cap V_Q)}
		\rightarrow\Torc{i}{S}{H_*^T(U_k \cap V_P)}\rightarrow
		\Torc{i}{S}{H_*^T(U_{\pi,l})}\rightarrow \cdots \]
		Where $(\pi,l)=\pi'_k(S_n w)$.
		The vanishing then follows by induction on $P$.
		
	\end{proof}
	
	\subsection{A formula for the Frobenius character}
	
	We now compute the Frobenius character $\frob_{-Y} N_{\pi'}$.
	Given a label $\bn$ and a composition $\tn \in \{0,1\}^n$, 
	define a new composition by
	\begin{equation}
		\label{eq:bntntobn}
		\left(\bn\cdot \tn \right)_i=
		(-1)^{t_i}(|b_i|+t_iN),
	\end{equation}
	where $N=\max(|b_1|,...,|b_n|)$. Here the meaning of the signs refer to the
	super variables described in Section \ref{sec:super}.
	\begin{defn}
		We have a symmetric function
		\begin{equation}
			\label{eq:chidef}
			\chi_{\dyckpath,l}[Y;q]=
			\sum_{\substack{\bn,\tn,i>n-l\Rightarrow t_i=0}} 
			(-1)^{|\tn|}
			q^{\inv_{\dyckpath}(\bn\cdot \tn)}Y_{\bn}.
		\end{equation}
	\end{defn}
	\begin{prop}
		\label{prop:nabchi}
		We have 
		\[\frob_{Y} H^*_T(U_{\pi,l})=
		\frac{1}{(1-q)^n}\chi_{\pi,l}[Y;q].\]
	\end{prop}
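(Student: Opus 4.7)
The plan is to adapt the inclusion-exclusion argument used in Corollary \ref{cor:nkmod} to the Hessenberg setting. First I would establish an analog of Proposition \ref{prop:dist-lat} showing that the submodules $F_{A^i_{n,l}}M_\pi\subseteq M_\pi$ generate a distributive lattice in $M_\pi = H^*_T(\rsh_\pi)$. The argument should mirror the affine Springer case: each Schubert cell $\Omega_\sigma\cap\rsh_\pi$, intersected with the open stratum corresponding to a fixed pattern of containment in the $Z^\sn_{\pi,l}$'s, is the complement of a coordinate subspace arrangement inside an affine space, and hence has pure even equivariant cohomology obtained by dividing a polynomial ring by a regular sequence, yielding the distributive lattice property by the same regular-sequence argument.

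Combining this with Proposition \ref{prop:upil homology} and the long exact sequence in equivariant cohomology for $\rsh_\pi = U_{\pi,l} \sqcup \bigcup_i Z^i_{\pi,l}$, one obtains the additive identity
\[
\frob_Y H^*_T(U_{\pi,l}) = \sum_{s=0}^{n-l}(-1)^s \frob_Y\Bigl(\bigoplus_{|S|=s} F_{A^S_{n,l}}M_\pi\Bigr),
\]
where the outer direct sum is $S_n$-equivariant (the group $S_n$ permuting the $s$-subsets of $\{1,\ldots,n\}$). For each fixed $s$, I would then compute the Frobenius character of the summed module via the affine paving of $\rsh_\pi$ restricted to $Z^S_{\pi,l}$ (whose cells are indexed by fixed points in $A^S_{n,l}$) combined with a Shareshian--Wachs-type argument refined to track the ``marking'' of positions in $S$. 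After $S_n$-equivariant reorganization the $\binom{n}{s}$ subsets collapse to the $\binom{n-l}{s}$ canonical representatives $T\subseteq\{1,\ldots,n-l\}$, and the inv-contribution from the added Schubert constraints is encoded precisely by the super-alphabet modification $\bn\cdot\tn_T$, since the shift $\overline{b_i+N}$ makes marked positions larger than unmarked ones in the super order. Summing over $s$ gives exactly the defining expression of $(1-q)^{-n}\chi_{\pi,l}[Y;q]$.

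The main obstacle is the second step, namely the identification of $\frob_Y\bigl(\bigoplus_{|S|=s}F_{A^S_{n,l}}M_\pi\bigr)$ with the super-alphabet generating function. Since no individual $F_{A^S_{n,l}}M_\pi$ is $S_n$-invariant, one must realize the direct sum as an induced representation from the stabilizer $S_{n-s}\times S_s$ of a chosen representative $S_0$, and compute the Frobenius character of this induction concretely. This is where Corollary \ref{cor:supthm} on the super-alphabet extension becomes essential: it provides the combinatorial machinery to match the induced character with the constrained sum over $\tn$ with $t_i=0$ for $i>n-l$.
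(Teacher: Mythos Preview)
Your strategy matches the paper's: use the \v{C}ech-type exact sequence coming from the closed cover $\bigcup_i Z^i_{\pi,l}$ (the distributive lattice property being already noted in the proof of Proposition~\ref{prop:upil homology}), recognize the degree-$m$ term $\bigoplus_{|\sn|=m} H^T_*(Z^\sn_{\pi,l})$ as $\Ind_{S_m\times S_{n-m}}^{S_n}\bigl(H^T_*(Z^{(1^m0^{n-m})}_{\pi,l})\otimes(\sgn_m\boxtimes\triv_{n-m})\bigr)$, and then translate the alternating sum into the super-alphabet combinatorics.

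Two points of precision. First, Corollary~\ref{cor:supthm} is not the tool you want for the last step; that result is specific to the $\nabla^k$ formula. What is actually needed is the elementary fact that induction from $S_m\times S_{n-m}$ with a sign twist on the first factor corresponds, on Frobenius characters, to the substitution $(Y_1,Y_0)\mapsto(-Y,Y)$ in the bivariate character. The paper computes $\frob_{Y_1,Y_0} H^T_*(Z^{(1^m0^{n-m})}_{\pi,l})$ directly from the Hessenberg--Schubert paving (this particular $Z^\sn_{\pi,l}$ is a union of such cells) and then performs that substitution; the operation $\bn\mapsto\bn\cdot\tn$ is just the monomial-level bookkeeping for $Y_1=-Y$.

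Second, your phrase ``the $\binom{n}{s}$ subsets collapse to the $\binom{n-l}{s}$ canonical representatives'' misidentifies the role of $\tn$. The vector $\tn$ in the definition of $\chi_{\pi,l}$ does not index the subset $S$ appearing in the resolution; it records, for each \emph{position} $i\in\{1,\dots,n\}$, whether the entry there comes from the $Y_1$- or the $Y_0$-alphabet after inducing. The constraint $t_i=0$ for $i>n-l$ arises from the fixed-point set: the condition $\sigma^{-1}_i\le n-l$ for $i\le m$ defining $A^{(1^m0^{n-m})}_{n,l}$ forces $\sigma_j>m$ whenever $j>n-l$, so positions beyond $n-l$ always land in the $Y_0$ block. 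The sum over all such $\tn$ then packages every value of $m$ at once, with $(-1)^{|\tn|}$ supplying the alternating sign of the resolution.
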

	
	\begin{proof}
		
		Proceeding the same way as Corollary \ref{cor:yaction},
		we see that there is an exact sequence
		\begin{equation}
			\label{eq:frobchires}
			\nonumber
			0\rightarrow E_{n-l}\rightarrow \cdots \rightarrow
			E_0 \rightarrow H_*^T(U_{\pi,l})
			\rightarrow 0,\quad E_m=\bigoplus_{|\sn|=m} H_*^T(Z^\sn_{\pi,l})
		\end{equation}
		of $S$-modules,
		where the maps are the same as the differentials
		in \c{C}ech cohomology using the maps induced from the
		inclusion $H_*^T(Z^\sn_{\pi,l})$. 
		The maps are equivariant 
		once the $S_n$-action is twisted by the sign
		representation on the $S_m$-factor, and we have
		\[E_m\cong
		\Ind^{S_n}_{S_m\times S_{n-m}} H_*^T(Z^\sn_{\pi,l})\otimes
		(\sgn_m\boxtimes \triv_{n-m})\]
		when $\sn=(1^m0^{n-m})$ ones,
		noticing that in this case $Z^{\sn}_{\pi,l}$ is preserved
		by $S_m\times S_{n-m}$, as is the fixed point set $A^{\sn}_{n,l}$.
		
		We must therefore compute the Frobenius character of the
		resolution. We have
		\begin{equation}
			\label{eq:ind term}
			\left((\sgn_m\boxtimes \triv_{n-m})H^T_*(Z^\sn_{\pi,l})\right)^{S_{\alpha'}\times S_{\alpha''}}=\frac{q^{-\#D(\pi)}}{(1-q)^n}
			\sum_{\sigma \in S_{\alpha}\times S_{\alpha'}\backslash A^{\sn}_{n,l}}
			q^{\inv_\pi(\sigma)}
		\end{equation}
		where $\alpha,\alpha'$ are compositions of $m,n-m$ respectively,
		the (anti) invariants are with respect to the dot action,
		and $\sigma$ is identified with the minimal representative.
		This follows since $Z^{\sn}_{\pi,l}$ is a union of Hessenberg Schubert varieties and so the character can be computed using the corresponding subset of summands of \eqref{eq:shareshianwachs}.
		As a quasi-symmetric function this is
		\[\frob_{Y_1,Y_0} H^T_*(Z^{\sn}_{\pi,l})=
		\frac{q^{-\#D(\pi)}}{(1-q)^n} \sum_{\bn,\tn,i>n-l\Rightarrow t_i=0} q^{\inv_\pi(\bn)-\#D(\pi)} Y_{\tn,\bn}\]
		where $Y_{\tn,\bn}$ is the product of $Y_{t_i,b_i}$, which is
		symmetric in both sets $Y_1,Y_0$. But in terms of 
		Frobenius characters, 
		induction from $S_m\times S_{n-m}$ combined with the
		sign twist corresponds to evaluating $Y_1=-Y,Y_0=Y$, 
		and the result follows.
		
	\end{proof}
	
	\begin{cor}
		\label{cor:nabchi}
		We have that
		\begin{equation}
			\label{eq:nabchi}
			\nabla^k e_n\left[\frac{XY}{1-q}\right]=
			\sum_{[\mn,\an]} \frac{t^{|\mn|} q^{\dinv_k(\mn,\an)}}{(1-q)^n\aut_q(\mn,\an)} X_\an
			\chi_{\pi'_k(\mn,\an)}[Y;q].
		\end{equation}
	\end{cor}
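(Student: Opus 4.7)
The plan is to mirror the proof of Theorem \ref{thm:frob}(c), replacing $Z_k$ by its open subvariety $U_k$, the closed Hessenberg varieties $\rsh_\pi^{\alpha'}$ by the open subvarieties $U_{\pi,l}^{\alpha'}$ from Definition \ref{def:upil}, and the Shareshian-Wachs formula by Proposition \ref{prop:nabchi}. Combining Corollary \ref{cor:nkmod} (identifying $N_k$ with $H_*^T(U_k)$) with Corollary \ref{cor:frobquot} gives
\begin{equation*}
\frob_{Y,X} H_*^T(U_k) = q^{-k\binom{n}{2}}\,\omega_X\, \nabla^k e_n\!\left[\frac{XY}{1-q}\right], \qquad (\star)
\end{equation*}
so it suffices to compute the left side geometrically and, after matching, cancel the outer $\omega_X q^{-k\binom{n}{2}}$.

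By Proposition \ref{eq:uhessprop}, the parabolic version $U_k^\alpha$ is paved by affine bundles of rank $\inv_k(w_-)$ over $U_{\pi,l}^{\alpha'}$, indexed by double cosets $S_n w S_\alpha$ with $(\pi,l) = \pi'_k(S_n w S_\alpha)$ and $\alpha'=\alpha(S_n w S_\alpha)$. Since Borel-Moore homology is concentrated in even degree on each piece, the characters are additive (as in \eqref{eq:longshort}). A parabolic version of Proposition \ref{prop:nabchi}, obtained from the product-of-flag-variety fiber structure of $U_{\pi,l}\to U_{\pi,l}^{\alpha'}$ together with Poincar\'e duality on the smooth $U_{\pi,l}$ (exactly as in the derivation of \eqref{eq:authess}), should give
\begin{equation*}
\frob_Y H_*^T(U_{\pi,l}^{\alpha'}) = \frac{q^{n'(\alpha')-\#\uio(\pi)}}{(1-q)^n\aut_q(\alpha')}\,\chi_{\pi,l}[Y;q].
\end{equation*}
Substituting this, combining with \eqref{eq:rankealpha} to collapse the $q$-exponents to $-\inv_k(w_+S_\alpha)$, and translating double cosets to labels via $\paff$ with the identity $\inv_k(w_+S_\alpha) = k\binom{n}{2} - \dinv_k(\mn,-\an)$ (as at the corresponding step of the proof of Theorem \ref{thm:frob}(c)) then yields
\begin{equation*}
[X^\alpha]\frob_{Y,X} H_*^T(U_k) = [X^\alpha]\sum_{[\mn,\an]} \frac{t^{|\mn|}\,q^{\dinv_k(\mn,-\an)-k\binom{n}{2}}}{(1-q)^n\, \aut_q(\mn,\an)}\, X_\an\, \chi_{\pi'_k(\mn,\an)}[Y;q].
\end{equation*}
A super-alphabet argument parallel to Corollary \ref{cor:supthm} then converts the sum with negative labels $-\an$ into $\omega_X$ applied to the corresponding sum with positive labels; equating with $(\star)$ and cancelling the common factor $\omega_X q^{-k\binom{n}{2}}$ produces the claimed identity.

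The two main technical obstacles are (i) writing out the parabolic version of Proposition \ref{prop:nabchi} used above, which should follow essentially verbatim from the fibration argument for closed Hessenberg varieties but must be verified to be compatible with the open condition defining $U_{\pi,l}$, and (ii) establishing the analog of Corollary \ref{cor:supthm} with $\chi_{\pi,l}$ in place of $\csfa_\pi$: one must check that swapping $\an$ for $-\an$ in the Hessenberg sum corresponds to applying $\omega_X$, which requires carefully tracking how the Koszul-style signs $(-1)^{|\tn|}$ built into the definition of $\chi_{\pi,l}$ interact with the super-alphabet $\inv$-statistic under the $X$-$Y$ symmetry used in the proof of Corollary \ref{cor:supthm}.
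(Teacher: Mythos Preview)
Your approach is correct and is precisely what the paper intends: the corollary is stated without proof immediately after Proposition~\ref{prop:nabchi}, and the implied argument is to rerun the computation \eqref{eq:frobspring} in the proof of Theorem~\ref{thm:frob}\ref{thm:frobitem} with $U_k$ in place of $Z_k$, using Proposition~\ref{eq:uhessprop} for the paving and Proposition~\ref{prop:nabchi} (in place of \eqref{eq:authess}) for the character of each piece, then comparing with Corollary~\ref{cor:frobquot} via Corollary~\ref{cor:nkmod}.

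Regarding your two flagged obstacles, both dissolve on closer inspection. For (i): since $(\mn,\an)$ is sorted with $\mn$ weakly decreasing and $l=z(\mn)$ counts the trailing zeros, we have $m_{n-l}>0=m_{n-l+1}$ whenever $0<l<n$, so $n-l$ is a breakpoint of $\alpha'=\alpha(\mn,\an)$; the open condition $e_i\notin V_{n-l}$ therefore descends to the parabolic base and the fibers of $U_{\pi,l}\to U_{\pi,l}^{\alpha'}$ are still full products of flag varieties, so the factor $\aut_q(\alpha')$ appears exactly as in \eqref{eq:authess}. For (ii): no new super-alphabet identity for $\chi$ is required. The geometric computation yields the $-\an$ form of the identity directly equal to $\omega_X\nabla^k e_n[XY/(1-q)]$; applying $\omega_X$ one obtains the stated $\an$ form because Corollary~\ref{cor:supthm} already asserts that the entire labeled sum in Theorem~\ref{thm:oldthm1} is valid for super alphabets in $\an$, and the passage from that sum to the present one is the plethystic substitution $Y\mapsto Y(1-t)$, which commutes with the super-alphabet interpretation on the $X$ side. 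Equivalently, the Koszul alternating sum over $\sn$ that produces $\chi_{\pi,l}$ from the $\csfa$ data is the same alternating sum that produces $H_*^T(U_k)$ from the $H_*^T(Z_k^\sn)$, so the super-alphabet step is inherited verbatim from the $\csfa$ case.
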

	We list some basic properties of $\chi_{\partialdyckpath}$
	and give an example.
	\begin{prop}\label{prop:vanishing1}
		We have $\chi_{\dyckpath,l}=0$ for $l=0$.
	\end{prop}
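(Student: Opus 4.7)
The plan is to deduce this directly from the geometric interpretation of $\chi_{\pi,l}$ given in Proposition \ref{prop:nabchi}, which identifies $\frac{1}{(1-q)^n}\chi_{\pi,l}[Y;q]$ with $\frob_Y H^*_T(U_{\pi,l})$. It therefore suffices to show that $U_{\pi,0}$ is empty.

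Looking back at Definition \ref{def:upil}, the space $U_{n,l}$ consists of complete flags $V_1 \subset \cdots \subset V_n$ in $\C^n$ satisfying $e_i \notin V_{n-l}$ for every $i$. When $l = 0$ we have $V_{n-l} = V_n = \C^n$, the ambient space, so $e_i \in V_n$ automatically holds for every $i$; the defining condition is never met and $U_{n,0} = \emptyset$. Consequently $U_{\pi,0} = U_{n,0} \cap \rsh_\pi = \emptyset$, hence $H^*_T(U_{\pi,0}) = 0$, and applying Proposition \ref{prop:nabchi} gives $\chi_{\pi,0}[Y;q] = 0$.

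No serious obstacle is expected here; the statement reduces to a one-line check from the definition once Proposition \ref{prop:nabchi} is invoked. One could alternatively hope for a purely combinatorial proof directly from equation \eqref{eq:chidef}: at $l = 0$ there is no constraint on $\tn$, so the task becomes producing a sign-reversing involution on the pairs $(\bn,\tn)$ which preserves $Y_\bn$ and $\inv_\pi(\bn \cdot \tn)$ while flipping the parity of $|\tn|$. A small check (e.g.\ $n=2$ with $\pi = 1100$) shows that such an involution cannot be supported on a single index, since the inner sum over $\tn$ for a fixed $\bn$ generally fails to vanish; cancellation only occurs after summing over all reorderings of $\bn$ that share the same underlying multiset. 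This makes the purely combinatorial route noticeably more delicate than the geometric argument above, which is the reason I would use the geometric one.
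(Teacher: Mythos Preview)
Your argument is correct: $U_{\pi,0}=\emptyset$ because $V_n=\C^n$ always contains every $e_i$, so by Proposition~\ref{prop:nabchi} one gets $\chi_{\pi,0}=0$. There is no circularity, since the proof of Proposition~\ref{prop:nabchi} does not use Proposition~\ref{prop:vanishing1}.

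However, the paper's own proof is the reverse of what you wrote. Its primary argument is combinatorial: when $l=0$ the condition on $\tn$ is vacuous, and the sum over $(\bn,\tn)$ is recognized directly as the plethystic evaluation $\csfa_\pi[Y-Y;q]$, which vanishes because $Y-Y$ is the zero alphabet. The emptiness of $U_{\pi,0}$ is then mentioned only as an aside. So your assessment that the combinatorial route is ``noticeably more delicate'' is off the mark: you were looking for a coordinate-by-coordinate sign-reversing involution, which indeed fails, but the paper instead packages the whole sum as a single plethystic substitution (this is exactly the mechanism ``$Y_1=-Y$, $Y_0=Y$'' used at the end of the proof of Proposition~\ref{prop:nabchi}). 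That gives a one-line combinatorial proof without any geometry. Your geometric argument has the advantage of being completely self-contained once Proposition~\ref{prop:nabchi} is in hand; the paper's argument has the advantage of not needing Proposition~\ref{prop:nabchi} at all, working straight from the definition~\eqref{eq:chidef}.
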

	\begin{proof}
		In this case there is no restriction on the values
		of $t_i$, and we find that $\chi_{\dyckpath,0}[Y;q]=\xi_{\dyckpath}[Y-Y;q]=0$. Obviously, in this case $U_{\pi,l}$ is empty.
	\end{proof}
	
	\begin{prop}\label{prop:vanishing2}
		We have $\chi_{\dyckpath,l}=0$ if 
		there exists $1\leq i<n$ such that 
		$1,\ldots,i$ do not attack $i+1,\ldots,n$. In
		other words, $\dyckpath$ has ``touch points,''
		where the path contacts the diagonal.
	\end{prop}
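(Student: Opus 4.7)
The plan is to reduce to Proposition \ref{prop:vanishing1} via a factorization of $\chi_{\pi, l}$ at an interior touch point of $\pi$. The key combinatorial fact is that a touch point at $i$ splits $D(\pi)$ into disjoint pieces on $L = \{1, \ldots, i\}$ and $R = \{i+1, \ldots, n\}$, so that $\pi = \pi_1 \star \pi_2$ for the sub-paths on $L$ and $R$, and $\inv_\pi(\cn) = \inv_{\pi_1}(\cn|_L) + \inv_{\pi_2}(\cn|_R)$ for every super-label $\cn$.

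First I would check that any interior touch point $i$ of $\pi$ actually satisfies $i \leq n - l$. Writing $m$ for the number of trailing East steps of $\pi$, we have $i \leq n - m$: after passing through $(i, i)$ the path still must take $n - i$ East steps, and at least its last $m$ occur there, so $n - i \geq m$. Since $(\pi, l)$ is a partial Dyck path we also have $l \leq m$, giving $i \leq n - l$.

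Now fix such an $i$ and split $\bn = (\bn_L, \bn_R)$ and $\tn = (\tn_L, \tn_R)$. Because $i \leq n - l$, the constraint $j > n - l \Rightarrow t_j = 0$ from the definition of $\chi_{\pi,l}$ is vacuous on $\tn_L$ and becomes the analogous constraint with the same $l$ on $\tn_R$ (after reindexing $R$ by subtracting $i$). Combined with $Y_{\bn} = Y_{\bn_L} Y_{\bn_R}$, $(-1)^{|\tn|} = (-1)^{|\tn_L|}(-1)^{|\tn_R|}$, and the splitting of $\inv_\pi$, the sum defining $\chi_{\pi, l}$ factors as
\[
\chi_{\pi, l}[Y; q] = \chi_{\pi_1, 0}[Y; q] \cdot \chi_{\pi_2, l}[Y; q].
\]
Proposition \ref{prop:vanishing1} gives $\chi_{\pi_1, 0}[Y; q] = 0$, so $\chi_{\pi, l} = 0$.

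The only subtlety I anticipate is that $\bn \cdot \tn$ is defined using the global maximum $N = \max_j |b_j|$ rather than separate maxima on $L$ and $R$. However, the value of $N$ only affects how bumped negative letters compare to each other and to positive letters, and any choice of $N$ exceeding all $|b_j|$ produces the same relative order under $<_1$ and hence the same value of $\inv_{\pi_j}$ on each half. This check is routine; no other difficulties seem to arise.
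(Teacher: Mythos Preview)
Your proof is correct and follows essentially the same route as the paper: factor $\chi_{\pi,l}=\chi_{\pi_1,0}\,\chi_{\pi_2,l}$ at the touch point and invoke Proposition~\ref{prop:vanishing1}. Your trailing-East-steps argument for $i\le n-l$ is equivalent to the paper's one-line remark that $n-l+1$ attacks $n$, and your check that the value of $N$ is immaterial for $\inv_\pi(\bn\cdot\tn)$ is a detail the paper leaves implicit.
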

	\begin{proof}
		Note that $n-l+1$ attacks $n$, so $i\leq n-l$. It
		follows that $\chi_{\dyckpath,l}$ factors as
		$\chi_{\dyckpath,l} = \chi_{\dyckpath_1,0}\chi_{\dyckpath_2,l},$
		where $\dyckpath_1$ the beginning part of $\dyckpath$ of length $i$ and $\dyckpath_2$ are the remaining steps. 
		We have $\chi_{\dyckpath_1,0}=0$ by Proposition \ref{prop:vanishing1}, hence $\chi_{\dyckpath,l}=0$. Note that in this case points of $\rsh_{\dyckpath}$ satisfy $\gamma V_i=V_i$ and therefore $V_i\subset V_{n-l}$ must contain a basis vector, so 
		$U_{\pi,l}$ is empty.
	\end{proof}
	
	It is not hard to prove the following:
	\begin{prop}
		\label{prop:maj}
		Let $(\mn,\an)$ be sorted, and let 
		$\mn'=\mn_{\shuff(\an)^{-1}}$,
		$(\dyckpath,l)=\partialdyckpath(\mn,\an).$
		Then the following are equivalent:
		\begin{enumerate}
			\item There exists $\tau\in S_n$ such that
			$\mn'$ is the
			exponent in the Garsia-Stanton descent polynomial
			$g_{\tau}(\yn)=\yn^{\mn'}$, whose degree
			is $\maj(\tau)$.
			\item We have $l>0$ and $\dyckpath$ has no touch points.
		\end{enumerate}
	\end{prop}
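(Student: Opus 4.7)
The plan is to show that both conditions unwind to the same elementary combinatorial statement about the block structure of $(\mn,\an)$. Since $(\mn,\an)$ is sorted, $\mn$ decomposes into maximal blocks $B_1,\ldots,B_s$ of equal $m$-values $v_1>v_2>\cdots>v_s$ with the $a$-entries strictly increasing inside each block, and I would organize the whole argument around this decomposition.

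First I would unwind condition (2). By Definition \ref{defn:dinv1}, ``$\dyckpath$ has no touch points'' is the same as $(i,i+1)\in\uio(\dyckpath)$ for every $1\leq i<n$, i.e.\ $i$ attacks $i+1$ for each such $i$. For $i$ interior to a block this is automatic, since $m_i=m_{i+1}$ and $a_i<a_{i+1}$; at a boundary index $i=\mathrm{end}(B_r)$ the attack inequality $m_i-m_{i+1}\leq\epsilon(a_i,a_{i+1})$ forces $v_r-v_{r+1}=1$ together with $a_{\mathrm{end}(B_r)}>a_{\mathrm{start}(B_{r+1})}$, that is $\max_{B_r}a>\min_{B_{r+1}}a$. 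Combined with $l>0\Leftrightarrow m_n=0$, condition (2) becomes the combined statement that the $v_r$ are exactly $s-1,s-2,\ldots,0$ and $\max_{B_r}a>\min_{B_{r+1}}a$ for each $r<s$.

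Next I would prove the following characterization of Garsia--Stanton exponents, which is the content of condition (1): a composition $\mu\in\mathbb{Z}_{\geq 0}^n$ is the exponent of some $g_\tau$ if and only if (i) its multiset of values is $\{0,1,\ldots,M\}$ with multiplicities and in particular contains $0$, and (ii) for every $v\geq 1$ appearing in $\mu$, one has $\max\{i:\mu_i=v\}>\min\{i:\mu_i=v-1\}$. The forward direction follows by expanding $g_\tau=\prod_{i\in\mathrm{Des}(\tau)}y_{\tau_1}\cdots y_{\tau_i}$ and reading the exponent of $y_{\tau_j}$ as $\#\{i\in\mathrm{Des}(\tau):i\geq j\}$, which is weakly decreasing with unit drops sitting exactly at the descent positions. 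Conversely, given $\mu$ satisfying (i) and (ii), I would recover $\tau$ by listing the positions of $\mu$ in decreasing order of value with increasing-position tiebreaks; condition (ii) forces $\mathrm{Des}(\tau)$ to coincide with the block boundaries of $\mu$, making $\mu$ the exponent of $g_\tau$.

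Finally I would push this characterization through $\mn'=\mn_{\shuff(\an)^{-1}}$. Since $\sigma=\shuff(\an)$ sends a position $i$ to the rank of $a_i$ in the lexicographic order on the pairs $(a_i,i)$ for positive labels, the positions of value $v$ in $\mn'$ are precisely the $\sigma$-ranks of the block $B_r$ with $v_r=v$. Part (i) then matches the first half of condition (2) verbatim. For part (ii), the key observation is that a $\sigma$-rank comparison across a block boundary agrees with the corresponding $a$-value comparison, and a tie $\max_{B_r}a=\min_{B_{r+1}}a$ breaks the ``wrong'' way, with the later block receiving the higher rank. Hence $\max_{B_r}\sigma>\min_{B_{r+1}}\sigma$ holds exactly when $\max_{B_r}a>\min_{B_{r+1}}a$ holds strictly, matching the second half of condition (2). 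The main obstacle is precisely this last point: the careful bookkeeping around tied $a$-values at a block boundary is what makes the two position conditions fail and hold simultaneously.
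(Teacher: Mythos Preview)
Your argument is correct. The paper itself omits the proof entirely (``It is not hard to prove the following''), so there is no reference proof to compare against; your block-decomposition approach is the natural one and each of the three reductions (touch points to consecutive attacks, the intrinsic characterization of Garsia--Stanton exponent vectors, and the transport through $\shuff(\an)$) is carried out correctly, including the delicate tie-breaking analysis at block boundaries.

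One point to make explicit in the write-up: your attack inequality $m_i-m_{i+1}\le\epsilon(a_i,a_{i+1})$ is the $k=1$ specialization of Definition~\ref{defn:dinv1}, so the equivalence you prove is for $\pi'_1(\mn,\an)$. This is evidently the intended reading, since condition~(1) is independent of $k$ while condition~(2) strictly weakens as $k$ grows, so the equivalence cannot hold for $k\ge 2$; but the proposition as printed suppresses the subscript, and you should flag the assumption. A second small point: the equivalence ``no touch points $\Leftrightarrow (i,i+1)\in D(\pi)$ for all $i$'' is a general fact about Dyck paths rather than a consequence of Definition~\ref{defn:dinv1} as you cite it, and you might also remark why, for paths of the specific form $\pi_1(\mn,\an)$, the absence of a touch point at height $i$ forces specifically $i$ to attack $i{+}1$ (this follows because $\mn$ is decreasing and $\an$ is increasing within blocks, so any attacking pair straddling $i$ can be pushed to the adjacent pair).
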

	Combining Propositions \ref{prop:vanishing1}, \ref{prop:vanishing2}, and \ref{prop:maj}, we have:
	\begin{cor}
		\label{cor:vanishing}
		The sum in \eqref{eq:nabchi}
		has at most $n!$ nonzero terms.
	\end{cor}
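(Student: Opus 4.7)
The plan is a direct combination of Propositions \ref{prop:vanishing1}, \ref{prop:vanishing2}, and \ref{prop:maj}, all of which are available to us. Given a sorted orbit representative $[\mn, \an]$ and its associated partial Dyck path $(\pi, l) = \pi'_k(\mn, \an)$, the corresponding summand of \eqref{eq:nabchi} is nonzero precisely when $\chi_{\pi, l}[Y; q] \neq 0$. Proposition \ref{prop:vanishing1} rules out the orbits with $l = 0$, and Proposition \ref{prop:vanishing2} rules out those whose Dyck path $\pi$ has a touch point, so the surviving orbits are exactly those with $l > 0$ and $\pi$ free of touch points.

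Next I would apply Proposition \ref{prop:maj}: the combined condition ``$l > 0$ and $\pi$ has no touch points'' is equivalent to $\mn' := \mn_{\shuff(\an)^{-1}}$ being the exponent vector of a Garsia--Stanton descent monomial $g_\tau(\yn) = \yn^{\mn'}$ for some $\tau \in S_n$. Since the Garsia--Stanton monomials are well known to form a basis of the $S_n$-coinvariant algebra $\C[\yn] / (e_1, \ldots, e_n)$, indexed bijectively by $\tau \in S_n$, there are exactly $n!$ such exponent vectors. Consequently, at most $n!$ distinct values of $\mn'$ --- equivalently, at most $n!$ distinct partial Dyck paths $(\pi, l)$ --- can arise from an orbit with a nonzero contribution, which gives the claimed bound.

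The only step requiring genuine work is Proposition \ref{prop:maj} itself, which the text asserts without proof. I expect this to be the main obstacle: the natural approach is to unpack Definition \ref{defn:dinv1} for the canonical $\an$ produced by the sorted representative, compute $\pi_k(\mn, \an)$ directly, and then read off from the attack pattern exactly where the path meets the diagonal. One then has to check that the absence of touch points together with $l > 0$ matches, term by term, the explicit combinatorial form of a Garsia--Stanton exponent $g_\tau$ in terms of the descent set of $\tau$ and the positions $\tau(1), \ldots, \tau(i)$ for each descent $i \in \mathrm{Des}(\tau)$. This is a direct but careful verification, after which the corollary follows by the counting above.
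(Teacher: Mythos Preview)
Your approach is exactly the paper's: the corollary is stated as an immediate consequence of Propositions~\ref{prop:vanishing1}, \ref{prop:vanishing2}, and~\ref{prop:maj}, and you combine them in the intended way. Your identification of Proposition~\ref{prop:maj} as the substantive (and unproved) ingredient is also accurate.

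There is one small imprecision in your final counting step. You correctly deduce that there are exactly $n!$ Garsia--Stanton exponent vectors, hence at most $n!$ possible values of $\mn'$. But the summation index in \eqref{eq:nabchi} is the orbit $[\mn,\an]$, and different orbits with different $\an$ can yield the same $\mn'$; so bounding the number of $\mn'$ values does not by itself bound the number of orbits. The statement should be read per $X$-monomial (equivalently, for fixed $\an$): once $\an$ is fixed, $\mn \mapsto \mn' = \mn_{\shuff(\an)^{-1}}$ is a bijection, so at most $n!$ choices of $\mn$ survive. This is what the example following the corollary illustrates, where each $M_\alpha(X)$-coefficient has at most $3!$ surviving summands. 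Your parenthetical ``equivalently, at most $n!$ distinct partial Dyck paths'' is not quite an equivalence either and is not needed for the argument.
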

	We can now give an example of one of the sums
	from Corollary \ref{cor:nabchi}.
	\begin{example}
		Using Proposition \ref{prop:nabchi}, we evaluate
		\[(1-q)^3\nabla_X e_3\left[\frac{XY}{1-q}\right]=
		M_{(3)}{\frac {1}{ \left( 1+q \right)  \left(1+q+q^2\right) }}\chi_{111}+\]
		\[M_{(2,1)}\left({\frac {t}{1+q}}\chi_{{1110}}+{\frac {1}{1+q}}\chi_{111}+{t}^{2}\chi_{11010}\right)+\]
		\[M_{(1,2)}\left(t \chi_{{1101}}+{\frac {1}{1+q}}\chi_{111}+{\frac {{t}^{2
			}}{1+q}}\chi_{11100}\right)+\]
		\[M_{(1,1,1)}\left(\left( t^2+t^3 \right) \chi_{{11010}}+t\chi_{{1110}}+t\chi_{{1101}}
		+\chi_{{111}}+t^2 \chi_{{11100}}\right).\]
		Here $M_\alpha=M_\alpha(X)$ is the quasi-symmetric
		monomial in the $X$ variables, and have 
		eliminated all terms with no contribution
		using Corollary \ref{cor:vanishing}.
		
		Now writing $\chi_{\partialdyckpath}'=
		(1-q)^{-n}\chi_{\partialdyckpath}[Y(1-q);q]$,
		we get
		\[\chi'_{1101}= \left( 1+q \right) s_{{3}}-{q}^{2}s_{{2,1}}\]
		\[\chi'_{11010}=
		{q}^{2}s_{{111}}-qs_{{2,1}}+s_{{3}}\]
		\[\chi'_{11100}=
		\left( 1+q\right) s_{{3}}-{q} \left( 1+q \right) s_{{2,1}}+{q}^{2} \left( 1+q \right) s_{{1,1,1}}\]
		\[\chi'_{111}=
		\left( 1+q \right)  \left(1+q+q^2\right) s_{{3}}\]
		\[\chi'_{1110}=
		\left( 1+q \right)^{2}s_{{3}}-{q}^{2} \left( 1+q \right) s_{{2,1}}.\]
		Notice that each factor satisfies the signed
		positivity property, and is divisible
		by the automorphism factors in the above equation.
		Moreover, the factors in front of $M_{(1,2)}$
		and $M_{(2,1)}$ are equal, so we do in fact
		end up with a symmetric function.
	\end{example}
	
	\begin{example} Conjecture \ref{conj:tor} can be used to compute
		the cohomology groups of the $U_{\pi'}$ and $U_k$, and we give
		an example for $U_{\pi'}$: we calculate
		\[\chi_{11100110}=- \left( 1+q \right) ^{2} \left( q-1 \right) ^{3}s_{{5}}-q \left( 1+q
		\right) ^{2} \left( q-1 \right) ^{3}s_{{4,1}}.\]
		Recall that Borel-Moore homology agrees with cohomology for
		smooth spaces, so we have $H^*_T(U_{\pi'})=H_*^T(U_{\pi'})$.
		From this we compute the signed Frobenius character of 
		$H_*(U_{\pi'})$, by applying the substitution
		\[F[Y]\mapsto (1-q)^{-n}F[(1-q)Y]
		\big|_{s_\lambda=x^{-\iota(\lambda)}s_\lambda,q=x^2},\]
		where $x$ is the generating variable for degree. We obtain
		\[ \left( {x}^{2}+1 \right) ^{3}s_{{5}}-{x}^{3} \left( {x}^{2}+1 \right) 
		^{2}s_{{4,1}}-{x}^{3} \left( {x}^{2}+1 \right) ^{2}s_{{3,2}}+\]
		\[{x}^{4}
		\left( {x}^{2}+1 \right) ^{2}s_{{3,1,1}}+
		{x}^{4} \left( {x}^{2}+1
		\right) ^{2}s_{{2,2,1}}- \left( {x}^{2}+1 \right) ^{2}{x}^{5}s_{{2,1,1
				,1}},\]
		noticing that the signs respect the parity of the degree in $x$.
	\end{example}
	
	\appendix
	\section{Distributive lattices}
	We collect some useful facts and definitions about collections of subspaces of a fixed vector space here.
	\begin{defn}
		Let $\CL$ be a collection of subspaces of a fixed vector space $\CV$. We say $\CL$ is a \emph{lattice} if for any subspaces $A,B\in\CL$ we have $A\cap B, A+B \in \CL$. If $\CL$ is not necessarily a lattice, we can always form a lattice by taking all possible combinations of the operations $\cap$ and $+$ and applying them to all collections of elements of $\CL$. A lattice thus obtained is called the \emph{lattice generated by} $\CL$. A lattice $\CL$ is called \emph{distributive} if for any $A,B,C$ we have
		\[
		(A+B)\cap C = (A\cap C) + (B\cap C).
		\]
	\end{defn}
	
	The condition above can be replaced by the condition
	\[
	(A\cap B)+C = (A+C)\cap (B+C).
	\]
	
	\begin{defn}
		Given a vector space $\CV$ with a choice of a basis $\{e_i\}_{i\in I}$, a subspace $A\subset \CV$ is called a \emph{coordinate subspace} if there exists a subset $J\subset I$ such that $A$ is the span of $\{e_i\}_{i\in J}$.
	\end{defn}
	
	It is clear that all coordinate subspaces of a given vector space with a given basis form a distributive lattice. Conversely, we have
	\begin{prop}
		Suppose $\CL$ is a finite distributive lattice of subspaces of $\CV$. There exists a basis of $\CV$ such that all elements of $\CL$ are coordinate subspaces.
	\end{prop}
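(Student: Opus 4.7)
The plan is to deduce this from the Birkhoff representation of finite distributive lattices by join-irreducibles. Recall that $J\in \CL$ is called \emph{join-irreducible} if $J\neq 0$ and $J=B+C$ with $B,C\in\CL$ forces $J=B$ or $J=C$; equivalently, $J$ has a unique maximal predecessor $J^-\in\CL$, namely $J^- = \sum\{K\in\CL: K\subsetneq J\}$. Let $\mathrm{Irr}(\CL)$ be the set of join-irreducibles and let $M=\sum_{A\in\CL} A\in\CL$ (which exists by finiteness and closure under $+$).

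First I would verify the standard fact that for every $A\in\CL$, $A=\sum_{J\in\mathrm{Irr}(\CL),\,J\subseteq A} J$, by induction on the height of $A$ in $\CL$. Next, for each $J\in\mathrm{Irr}(\CL)$ choose an arbitrary vector space complement $C_J$ of $J^-$ in $J$, so that $J=J^-\oplus C_J$. The central claim is
\[
M = \bigoplus_{J\in\mathrm{Irr}(\CL)} C_J,
\]
and more generally, for every $A\in\CL$,
\[
A = \bigoplus_{J\in\mathrm{Irr}(\CL),\,J\subseteq A} C_J.
\]
Once this is established, taking a basis of each $C_J$, unioning them, and then extending by an arbitrary basis of a complement of $M$ in $\CV$ produces the required basis.

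The main obstacle is proving that the sum $\sum_J C_J$ is direct. Here I would use the key consequence of distributivity: if $J$ is join-irreducible and $J\subseteq B_1+\cdots+B_r$ with $B_i\in\CL$, then by $J = J\cap(B_1+\cdots+B_r) = (J\cap B_1)+\cdots+(J\cap B_r)$ and join-irreducibility we get $J\subseteq B_i$ for some $i$. Now suppose $\sum_{J\in S} v_J = 0$ with $v_J\in C_J\setminus\{0\}$ and $S\subseteq \mathrm{Irr}(\CL)$ minimal. Pick $J\in S$ maximal in the induced order. Set $B=\sum_{K\in S\setminus\{J\}} K\in\CL$. By the displayed property applied to $B$, if $J\subseteq B$ then $J\subseteq K$ for some $K\in S\setminus\{J\}$, contradicting maximality of $J$; hence $J\not\subseteq B$, and therefore $B\cap J\subsetneq J$ in $\CL$, so $B\cap J\subseteq J^-$. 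But $v_J=-\sum_{K\neq J}v_K\in B\cap J\subseteq J^-$, while $v_J\in C_J$, so $v_J\in C_J\cap J^- = 0$, a contradiction. This proves directness.

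Finally, for $A\in\CL$, the inclusion $\bigoplus_{J\subseteq A} C_J\subseteq A$ is obvious, and the reverse follows from the Birkhoff expansion $A=\sum_{J\subseteq A}J=\sum_{J\subseteq A}(J^-+C_J)$ together with an induction on the size of $\{J\in\mathrm{Irr}(\CL):J\subseteq A\}$, using that $J^-\subseteq A$ is itself a smaller element of $\CL$. Thus every element of $\CL$ is a coordinate subspace for the chosen basis.
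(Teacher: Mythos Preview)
Your proof is correct and takes a genuinely different route from the paper's. The paper argues by induction on $|\CL|$: after reducing to the case $0,\CV\in\CL$, it picks a maximal $A\neq\CV$ and a minimal $B\not\subseteq A$, applies the induction hypothesis to the sublattice $\{C\in\CL:C\subseteq A\}$ to get a basis of $A$, extends it across $B$, and then uses distributivity in the form $C=(C\cap A)+(C\cap B)$ together with minimality of $B$ to show every $C$ is a coordinate subspace. Your argument instead realizes Birkhoff's representation concretely: you pick complements $C_J$ of $J^-$ in $J$ for each join-irreducible $J$ and show $A=\bigoplus_{J\leq A}C_J$, with the directness step driven by the prime-like property ``$J$ join-irreducible and $J\subseteq\sum B_i$ implies $J\subseteq B_i$ for some $i$.'' Your approach is more structural and yields an explicit description of which basis vectors lie in each $A$ (those in blocks $C_J$ with $J\leq A$), which is pleasant; the paper's approach is shorter and entirely self-contained, needing no mention of join-irreducibles or Birkhoff.

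One small imprecision worth cleaning up: you assert $J^-\in\CL$, but if the bottom element $A_0$ of $\CL$ is nonzero then $A_0$ is join-irreducible by your definition and $A_0^-=\sum\emptyset=0\notin\CL$. This does not damage the argument (you only need $J^-$ to define $C_J$, and in the directness step the maximal $J\in S$ is never the bottom once $|S|\geq 2$), but the parenthetical ``$J^-\in\CL$'' should be dropped or qualified.
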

	\begin{proof}
		Replacing $\CV$ by $\sum_{A\in \CL} A$ or by $\CL/\bigcap_{A\in\CL} A$ if necessary we may assume that $\{0\},\CV$ are both in $\CL$. Now we induct on the number of elements of $\CL$. The base case $|\CL|=2$ is obvious. For the induction step, let $A\in \CL$ be any maximal element of $\CL$ not equal to $\CV$, and let $B$ be any minimal element of $\CL$ not contained in $A$. Note that we have $A+B=\CV$ by the maximality of $A$. Elements of $\CL$ contained in $A$ form a distributive lattice. Applying the induction assumption, choose any basis of $A$ such that all elements of $\CL$ contained in $A$ are coordinate subspaces, so in particular $A\cap B$ is a coordinate subspace. Complete the basis of $A\cap B$ to a basis of $B$, and we obtain a basis of $\CV$ such that any element of $\CL$ contained in $A$ is a coordinate subspace, and $B$ is a coordinate subspace. Now let $C \in \CL$ be any element and let us prove that $C$ is a coordinate subspace. If $C\subset A$ we are done. Otherwise, apply 
		distributivity to write
		\[
		C = C\cap (A+B) = (C\cap A) + (C\cap B).
		\]
		Here $C\cap B\subset B$ is such that $C\cap B\nsubset A$. By the minimality of $B$ we have $C\cap B=B$, so we have $C=(C\cap A) + B$. Since both $C\cap A$ and $B$ are coordinate subspaces, $C$ is as well.
	\end{proof}
	\begin{rem}
		The statement is not true in general for infinite lattices. For a counterexample, let $\CV=\C[t]$ and let $H_x$ denote the subspace of polynomials vanishing at $x\in\C$. Suppose there exists a basis of $\CV$ so that all $H_x$ 
		are coordinate subspaces, and let $e$ be any basis vector which is not in $H_0$. Then $e$ must belong to $H_x$ for all $x\neq 0$ and therefore $e=0$, which is a
		contradiction.
	\end{rem}
	
	\subsection{Regular sequences}
	\begin{defn}
		A sequence of commuting endomorphisms $f_1,\ldots,f_n$ of a vector space $M$ is called \emph{regular} if for any $k=1,\ldots,n$ and any $m\in M$ satisfying $f_k m \in (f_1,\ldots,f_{k-1}) M$ we have $m_k\in (f_1,\ldots,f_{k-1}) M$.
	\end{defn}
	\begin{prop}\label{prop:regular sequence}
		Suppose $M$ is a non-negatively graded vector space, and suppose $f_1,\ldots,f_n$ are commuting endomorphisms of $M$ of positive degree. Then the following conditions are equivalent:
		\begin{enumerate}
			\item The sequence $f_1,\ldots,f_n$ is regular.
			\item $M$ is a free module over the polynomial ring $\C[f_1,\ldots,f_n]$.
			\item $f_i$ is injective for all $i$, $f_i M \cap f_j M = f_i f_j M$ for all $i\neq j$ and the subspaces $f_i M\subset M$ generate a distributive lattice.
		\end{enumerate}
	\end{prop}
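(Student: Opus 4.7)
My plan is to close the loop via $(2) \Rightarrow (1)$, $(2) \Rightarrow (3)$, $(3) \Rightarrow (1)$, and $(1) \Rightarrow (2)$. Write $R = \C[f_1, \ldots, f_n]$ for the polynomial ring acting on $M$; since each $f_i$ has positive degree, both $R$ and $M$ are non-negatively graded with $R_0 = \C$, so graded Nakayama is available: any non-negatively graded $R$-module $N$ with $N = (f_1,\ldots,f_n) N$ must vanish, as one sees by induction on the lowest nonzero degree.

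The implication $(2) \Rightarrow (1)$ is immediate from the definition, and $(2) \Rightarrow (3)$ follows by choosing a homogeneous free basis $\{e_\alpha\}$ of $M$ over $R$: each $f_i$ then acts injectively, and the lattice generated by the $f_i M$ decomposes summand-wise into copies of the lattice of principal monomial ideals $(f_{i_1}\cdots f_{i_r}) \subset R$, which is distributive and satisfies $(f_i) \cap (f_j) = (f_i f_j)$ for $i\neq j$.

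For $(3) \Rightarrow (1)$, I would use distributivity of the lattice generated by the $f_i M$ to compute, for any $k$,
\begin{align*}
f_k M \cap \bigl(f_1 M + \cdots + f_{k-1} M\bigr) &= \sum_{i<k} \bigl(f_k M \cap f_i M\bigr) = \sum_{i<k} f_k f_i M,
\end{align*}
using the hypothesis $f_i M \cap f_j M = f_i f_j M$ in the second equality. Hence if $f_k m \in (f_1, \ldots, f_{k-1}) M$, we can write $f_k m = f_k m'$ with $m' \in (f_1, \ldots, f_{k-1}) M$, and the injectivity of $f_k$ from (3) forces $m = m'$, which is exactly the regular sequence condition.

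The main work, and the step I expect to be the main obstacle, is $(1) \Rightarrow (2)$, especially because $M$ is not assumed finitely generated. My plan is to invoke the Koszul complex $K_\bullet := K_\bullet(f_1, \ldots, f_n; R)$, which is a graded free resolution of $\C = R/(f_1, \ldots, f_n)$ since the $f_i$ form a regular sequence on $R$ itself. Tensoring over $R$ with $M$, the homology of $K_\bullet \otimes_R M$ is $\Tor^R_\bullet(M, \C)$, and regularity of $(f_1,\ldots,f_n)$ on $M$ is precisely the statement that this Koszul homology vanishes in positive degrees; hence $\Tor^R_i(M, \C) = 0$ for $i \geq 1$. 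Next I would choose a homogeneous lift $\{\tilde e_\alpha\}_{\alpha \in I} \subset M$ of a homogeneous basis of $M/(f_1, \ldots, f_n) M$. Graded Nakayama (applied degree by degree, exploiting that each $f_i$ strictly raises degree) promotes this to a surjection $\varphi\colon R^{(I)} \twoheadrightarrow M$. Its kernel $N$ is again non-negatively graded, and the six-term sequence obtained by applying $-\otimes_R \C$ to $0 \to N \to R^{(I)} \to M \to 0$, combined with the vanishing of $\Tor^R_1(M,\C)$ and the fact that $\varphi$ induces an isomorphism modulo $(f_1,\ldots,f_n)$, forces $N/(f_1,\ldots,f_n) N = 0$. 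A second application of graded Nakayama to $N$ gives $N = 0$, so $\varphi$ is an isomorphism and $M$ is free.
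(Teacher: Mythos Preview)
Your proof is correct and follows essentially the same cycle $(1)\Rightarrow(2)\Rightarrow(3)\Rightarrow(1)$ as the paper, with the arguments for $(2)\Rightarrow(3)$ and $(3)\Rightarrow(1)$ matching the paper's almost verbatim. The only difference is that the paper dismisses $(1)\Rightarrow(2)$ as ``standard'' while you spell out the Koszul/graded-Nakayama argument in full; your extra implication $(2)\Rightarrow(1)$ is redundant but harmless.
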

	\begin{proof}
		The implication $(i)\Rightarrow(ii)$ is standard. To show that $(ii)\Rightarrow(iii)$ let $(m_i)_{i\in I}$ be a basis of $M$ as a module over the polynomial ring. Then $f_1^{a_1} \cdots f_n^{a_n} m_i$ form a basis of $M$ as a vector space. Clearly, $f_i M$ are coordinate subspaces for this basis. Hence they generate a distributive lattice.
		
		Now let us show that $(iii)\Rightarrow (i)$. Suppose $f_k m \in (f_1,\ldots,f_{k-1}) M$. This implies
		\[
		f_k m \in f_k M \cap (f_1 M + \cdots + f_{k-1} M) = (f_k M \cap f_1 M) + \cdots + (f_k M \cap f_{k-1} M),
		\]
		and using $f_k M \cap f_i M = f_k f_i M$ and the injectivity of $f_k$ we obtain $m\in  f_1 M + \cdots + f_{k-1} M$.
	\end{proof}
	
	\subsection{Homology}
	For simplicity of notation, we formulate results in this section for ordinary Borel-Moore homology, but all statements clearly remain valid for equivariant Borel-Moore homology.
	
	\begin{defn}
		Suppose $X$ is a topological space. A collection of closed subsets $\CZ$ is called a \emph{lattice} if for any $Z_1, Z_2\in \CZ$ we have $Z_1\cap Z_2, Z_1\cup Z_2 \in \CZ$. A lattice of subsets is called \emph{nice} if for any $Z\in \CZ$ the map on Borel-Moore homology $H_*(Z)\to H_*(X)$ is injective and for any $Z_1, Z_2\in\CZ$ we have $H_*(Z_1\cap Z_2)=H_*(Z_1)\cap H_*(Z_2)$, $H_*(Z_1\cup Z_2)=H_*(Z_1)+H_*(Z_2)$.
	\end{defn}
	
	\begin{prop}\label{prop:homology 1}
		Suppose $\CZ$ is a collection of closed subsets of $X$, and suppose for any $Z\in \CZ$ the map on Borel-Moore homology $H_*(Z)\to H_*(X)$ is injective, and suppose for any tuple $Z_1,\ldots, Z_m$ the map on Borel-Moore homology
		\[
		H_*(Z_1\cap\cdots\cap Z_m) \to H_*(Z_1)\cap\cdots \cap H_*(Z_m)
		\]
		is an isomorphism. Then the following conditions are equivalent:
		\begin{enumerate}
			\item The lattice generated by $\CZ$ is nice.
			\item The subspaces $H_*(Z)\subset H_*(X)$ for $Z\in\CZ$ generate a distributive lattice.
		\end{enumerate}
	\end{prop}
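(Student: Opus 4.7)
The direction $(i)\Rightarrow(ii)$ is immediate: niceness means that the map $Z\mapsto H_*(Z)$ from the subset lattice $\CL$ generated by $\CZ$ to the subspace lattice of $H_*(X)$ is a lattice homomorphism, sending $\cup$ to $+$ and $\cap$ to $\cap$. Since $\CL$ is distributive (any lattice of sets is), its image under this homomorphism is distributive, and this image is precisely the subspace lattice generated by $\{H_*(Z):Z\in\CZ\}$.

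For $(ii)\Rightarrow(i)$, the plan is to write each $W\in\CL$ in disjunctive normal form $W=Y_1\cup\cdots\cup Y_m$ with each $Y_i$ an intersection of elements of $\CZ$, and induct on $m$. The case $m\le 1$ is exactly the hypothesis on intersections. For the inductive step, set $W_1=Y_1\cup\cdots\cup Y_k$ and apply Mayer--Vietoris to the closed cover $W=W_1\cup Y_{k+1}$:
\[\cdots\to H_n(W_1\cap Y_{k+1})\xrightarrow{\alpha}H_n(W_1)\oplus H_n(Y_{k+1})\xrightarrow{\beta}H_n(W)\xrightarrow{\partial}H_{n-1}(W_1\cap Y_{k+1})\to\cdots.\]
The key observation is that $W_1\cap Y_{k+1}=\bigcup_{i=1}^k(Y_i\cap Y_{k+1})$ has a DNF with only $k$ terms (each still an intersection of elements of $\CZ$), so the inductive hypothesis applies. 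Combining distributivity of the subspace lattice with the induction yields
\[H_*(W_1)\cap H_*(Y_{k+1})=\Bigl(\sum_i H_*(Y_i)\Bigr)\cap H_*(Y_{k+1})=\sum_i\bigl(H_*(Y_i)\cap H_*(Y_{k+1})\bigr)=\sum_i H_*(Y_i\cap Y_{k+1})=H_*(W_1\cap Y_{k+1})\]
as subspaces of $H_*(X)$.

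Since $H_*(W_1\cap Y_{k+1})\hookrightarrow H_*(X)$ by induction, the map $\alpha$ is injective (if $\alpha(c)=0$ then $c$ already vanishes in $H_*(W_1)$, hence in $H_*(X)$, hence in $H_*(W_1\cap Y_{k+1})$); applying the same argument at homological degree $n-1$ forces $\partial=0$, so Mayer--Vietoris reduces to short exact sequences. A brief diagram chase using injectivity of $H_*(W_1),H_*(Y_{k+1})\to H_*(X)$ then shows $H_n(W)\to H_n(X)$ is injective with image $H_n(W_1)+H_n(Y_{k+1})=\sum_{i=1}^{k+1}H_n(Y_i)$, closing the induction. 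Once injectivity and the image formula are established for every element of $\CL$, the two niceness identities $H_*(W\cup W')=H_*(W)+H_*(W')$ and $H_*(W\cap W')=H_*(W)\cap H_*(W')$ for arbitrary $W,W'\in\CL$ follow from their DNF expansions and another use of distributivity. The main obstacle is precisely the distributivity-driven identification $H_*(W_1)\cap H_*(Y_{k+1})=H_*(W_1\cap Y_{k+1})$ at each stage: this is what forces the MV connecting map to vanish and thereby propagates niceness from intersections of elements of $\CZ$ to all of $\CL$.
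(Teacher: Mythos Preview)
Your proof is correct and follows essentially the same approach as the paper: both directions are argued the same way, with the nontrivial direction $(ii)\Rightarrow(i)$ handled by induction on the number of terms in a union of intersections, using Mayer--Vietoris together with distributivity to identify $H_*(W_1)\cap H_*(Y_{k+1})$ with $H_*(W_1\cap Y_{k+1})$ and thereby kill the connecting homomorphism. The only cosmetic difference is that the paper first replaces $\CZ$ by its closure under intersections, while you keep track of intersections explicitly via the DNF language.
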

	\begin{proof}
		$(i)\Rightarrow (ii)$ is evident because if the lattice generated by $\CZ$ is nice, then the operations on vector spaces match with the operations on subsets, while on subsets the distributive law $(Z_1\cup Z_2)\cap Z_3 = (Z_1\cap Z_3)\cup (Z_2\cap Z_3)$ is automatic.
		
		Now we show $(ii)\Rightarrow(i)$. Without loss of generality we may assume that $\CZ$ is closed under intersections. Then any element in the lattice generated by $\CZ$ can be written as a union of elements of $\CZ$. Let us show that $H_*(Z_1\cup\cdots\cup Z_m)=H_*(Z_1)+\cdots H_*(Z_m)$ for any $Z_1,\ldots,Z_m$ by induction on $m$. Let $A=Z_1\cup\cdots\cup Z_{m-1}$. We have a long exact sequence
		\begin{equation}\label{eq:long exact sequence}
			\cdots \to H_*(A\cap Z_m) \to H_*(A)\oplus H_*(Z_m) \to H_*(A\cup Z_m)\to \cdots
		\end{equation}
		By the induction assumption, the assumptions on $\CZ$, 
		and the distributivity assumption we have
		\[
		H_*(A\cap Z_{m-1}) = H_*((Z_1\cap Z_m)\cup \cdots \cup (Z_{m-1}\cap Z_m))\]
		\[= H_*(Z_1\cap Z_m)+\cdots+H_*(Z_{m-1}\cap Z_m)
		\]
		\[
		=(H_*(Z_1)\cap H_*(Z_m))+\cdots+(H_*(Z_{m-1})\cap H_*(Z_m)) 
		\]
		\[
		= (H_*(Z_1)+\cdots+H_*(Z_{m-1})) \cap H_*(Z_m) = H_*(A) \cap H_*(Z_m).
		\]
		Thus in particular the second arrow in \eqref{eq:long exact sequence} is injective and the long exact sequence splits into short exact sequences. This implies that $H_*(A\cup Z_m)$ is the quotient $H_*(A)\oplus H_*(Z_m)/H_*(A)\cap H_*(Z_m)$, which is isomorphic to the sum $H_*(A)+ H_*(Z_m)$, so the induction step is proved.
		
		Now suppose $A=\bigcup_{i} Z_i$, $B=\bigcup_i Z_i'$ are arbitrary elements of the lattice generated by $\CZ$. We have 
		\[
		H_*(A) = \sum_i H_*(Z_i), \quad H_*(B) = \sum_i H_*(Z_i'),
		\]
		and therefore
		\[
		H_*(A) + H_*(B) = \sum_i H_*(Z_i) + \sum_i H_*(Z_i') = H_*(A\cup B),
		\]
		\[
		H_*(A) \cap H_*(B) = \sum_{i,j} H_*(Z_i)\cap H_*(Z_j') = H_*\left(\bigcup_{i,j} Z_i\cap Z_j'\right)=H_*(A\cap B).
		\]
	\end{proof}
	
	A useful tool for constructing nice lattices are affine pavings.
	\begin{prop}\label{prop:homology 2}
		Suppose $X$ is paved by finitely many sets $Z_\alpha$ indexed by $\alpha\in\Lambda$ where $\Lambda$ is a poset such that for any lower set $A\subset\Lambda$ the union $Z_A:=\bigcup_{\alpha\in A} Z_\alpha$ is closed. Suppose for any $\alpha\in \Lambda$ the odd homologies $H_{2i+1}(Z_\alpha)$ vanish. Then the subsets of the form $Z_A$ form a nice lattice of sets.
	\end{prop}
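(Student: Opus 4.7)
The plan is to proceed by induction on $|A|$ to establish simultaneously that (i) $H_*(Z_A)$ is concentrated in even degrees and (ii) for any lower set $B \subseteq A$ the map $H_*(Z_B) \to H_*(Z_A)$ is injective. The set-theoretic lattice property is immediate: since the $Z_\alpha$ partition $X$, we have $Z_A \cap Z_B = Z_{A \cap B}$ and $Z_A \cup Z_B = Z_{A \cup B}$, and both $A \cap B$ and $A \cup B$ are lower sets, so the collection $\{Z_A : A \text{ lower}\}$ is closed under intersection and union.

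For the inductive step, I pick a maximal element $\alpha \in A$ so that $A' = A \setminus \{\alpha\}$ is still a lower set. Then $Z_{A'} \subset Z_A$ is closed with open complement $Z_\alpha$, and the long exact sequence of a closed pair in Borel-Moore homology reads
\[
\cdots \to H_*(Z_{A'}) \to H_*(Z_A) \to H_*(Z_\alpha) \to H_{*-1}(Z_{A'}) \to \cdots
\]
By the inductive hypothesis $H_*(Z_{A'})$ is concentrated in even degrees, and by assumption so is $H_*(Z_\alpha)$. Hence the connecting maps vanish, the sequence splits into short exact sequences, and we conclude both that $H_*(Z_A)$ is concentrated in even degrees and that $H_*(Z_{A'}) \hookrightarrow H_*(Z_A)$ is injective. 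Injectivity for arbitrary lower $B \subseteq A$ follows by iterating; taking $A = \Lambda$ gives injectivity of $H_*(Z_A) \hookrightarrow H_*(X)$ for every lower $A$.

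To finish, I would apply the Mayer-Vietoris sequence for the closed cover $Z_{A \cup B} = Z_A \cup Z_B$ with intersection $Z_{A \cap B}$:
\[
\cdots \to H_*(Z_{A \cap B}) \to H_*(Z_A) \oplus H_*(Z_B) \to H_*(Z_{A \cup B}) \to H_{*-1}(Z_{A \cap B}) \to \cdots
\]
All three groups are concentrated in even degrees by the previous step, so the sequence breaks into short exact sequences. Surjectivity of the middle arrow onto $H_*(Z_{A \cup B})$ identifies the latter with $H_*(Z_A) + H_*(Z_B)$ inside $H_*(X)$, while injectivity of $H_*(Z_{A \cap B})$ into the direct sum, together with the standard diagram chase (an element lying in both $H_*(Z_A)$ and $H_*(Z_B)$ maps to zero in $H_*(Z_{A \cup B})$ under the MV map $f - g$, hence comes from $H_*(Z_{A \cap B})$), identifies $H_*(Z_{A \cap B})$ with $H_*(Z_A) \cap H_*(Z_B)$.

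The main obstacle is really just bookkeeping: making sure the Borel-Moore long exact sequences go in the correct direction and carefully propagating the even-degree condition through the induction. Once those are in place, all the niceness properties fall out formally from parity vanishing, with no further geometric input needed beyond the paving hypothesis.
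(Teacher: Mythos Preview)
Your proof is correct and follows essentially the same approach as the paper's: an induction on $|A|$ via the closed-pair long exact sequence to obtain even-degree concentration and injectivity, followed by the Mayer--Vietoris sequence (which then degenerates by parity) to deduce the identities $H_*(Z_{A\cap B})=H_*(Z_A)\cap H_*(Z_B)$ and $H_*(Z_{A\cup B})=H_*(Z_A)+H_*(Z_B)$. Your write-up simply makes explicit the details that the paper compresses into a sentence.
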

	\begin{proof}
		Applying long exact sequences and the vanishing of odd homology we see that the odd homologies of $Z_A$ vanish for any finite lower set $A$ and for any $A\subset B$ the map $H_*(Z_A)\to H_*(Z_B)$ is injective. Now consider arbitrary lower sets $A$, $B$. We have a short exact sequence
		\[
		0\to H_*(Z_A\cap Z_B) \to H_*(Z_A)\oplus H_*(Z_B) \to H_*(Z_A\cup Z_B)\to 0.
		\]
		Since $H_*(Z_A\cup Z_B)$ embeds into $H_*(X)$ we obtain that $H_*(Z_A\cup Z_B)=H_*(Z_A)+H_*(Z_B)$ and $H_*(Z_A\cap Z_B)=H_*(Z_A)\cap H_*(Z_B)$.
	\end{proof}
	
	\bibliographystyle{amsalpha}
	\bibliography{refs}
	
\end{document}